\makeatletter  \@addtoreset{equation}{section} \makeatother
\newtheorem{lem}{Lemma}[section]
\newtheorem{theo}{Theorem}[section]
\newtheorem{pro}{Proposition}[section]
\DeclareMathOperator{\N}{\mathbb{N}}
\DeclareMathOperator{\R}{\mathbb{R}}
\DeclareMathOperator{\T}{\mathbb{T}}
\DeclareMathOperator{\D}{\mathbb{D}}
\DeclareMathOperator{\Z}{\mathbb{Z}}
\newcommand\E{\varepsilon}
\thanks{C.G. has been supported by the ERC-StG-852741 (CAPA), the MINECO--Feder (Spain) research grant number RTI2018--098850--B--I00, the Junta de Andaluc\'ia (Spain) Project
FQM 954, the Severo Ochoa Programme for Centres of Excellence in R\&D(CEX2019-000904-S) and by the PID2021-124195NB-C32, the  ERC Advanced Grant 834728 and by the CAM under the multiannual Agreement with UAM in the line for the Excellent of the University Research Staff in the context of the V PRICIT. T. H. has been supported by Tamkeen under the NYU Abu Dhabi Research Institute grant.    { J. M.  has been partially supported by PID2020-112881GB-I00 and Severo Ochoa and Maria de Maeztu program for centers CEX2020-001084-MMTM2016--75390 (Mineco, Spain)}}
\keywords{2D Euler equations, periodic solutions, bifurcation theory, eigenvalue problems}
\begin{document}

\title[time periodic solutions around radial monotonic profiles]{Time periodic solutions close to    localized radial monotone profiles  for the 2D Euler equations }

\author[C. Garc\'ia]{Claudia Garc\'ia}
\address{ Departamento de Matem\'aticas, Universidad Aut\'onoma de Madrid, Ciudad Universitaria de Cantoblanco, 28049, Madrid, Spain \& Research Unit ``Modeling Nature'' (MNat), Universidad de Granada, 18071 Granada, Spain}
\email{ claudia.garcial@uam.es}
\author[T. Hmidi]{Taoufik Hmidi}
\address{NYUAD Research Institute, New York University Abu Dhabi, PO BOX 129188, Abu Dhabi, United Arab Emirates. Univ Rennes, CNRS, IRMAR-UMR 6625, F-35000 Rennes, France}
\email{thmidi@univ-rennes1.fr}
\author[J. Mateu]{Joan Mateu}
\address{Departament de Matem\`atiques, Universitat Aut\`onoma de Barcelona, 08193 Bellaterra; Barcelona, Catalonia}
\email{mateu@mat.uab.cat}


\begin{abstract}
In this paper, we address  for the 2D Euler equations  the existence of rigid time periodic solutions close to  stationary  radial vortices of type $f_0(|x|){\bf 1}_{\D}(x)$, with $\D$ the unit disc and $f_0$  being a strictly monotonic profile with constant sign. We distinguish two scenarios  according to  the sign of the  profile: {\it defocusing and focusing.} In the first regime, we have scarcity of the bifurcating curves associated with lower symmetry. However in the {\it focusing case} we get a countable family of bifurcating solutions associated with large symmetry. The approach  developed in this work is new and flexible, and  the explicit expression of the radial profile is no longer  required as in \cite{GHS:2020} with the quadratic shape. The alternative for that is a refined study of the associated  spectral problem based on  Sturm-Liouville differential equation with a variable potential  that changes the sign depending on the shape of the  profile  and the location of the time period. Deep hidden  structure on positive definiteness of some intermediate integral operators are also discovered and used in a crucial way. Notice that a special study will be performed  for  the linear problem associated with the first mode founded on  Pr\"{u}fer transformation and Kneser's  Theorem  on the non-oscillation phenomenon.  
\end{abstract}

\maketitle

\tableofcontents
\section{Introduction}
The motion of a two-dimensional ideal homogeneous incompressible fluid follows the 2D Euler equations, whose vorticity-velocity formulation reads as 
\begin{eqnarray}   \label{Eulereq}	           
       \left\{\begin{array}{ll}
          	\partial_t\omega+(v\cdot \nabla) \omega=0, &\text{ in $[0,+\infty)\times\mathbb{R}^2$}, \\
         	 v=K*\omega,& \\
         	 \omega(0,\cdot)=\omega_0,& \text{ in $\mathbb{R}^2$}.
       \end{array}\right.
\end{eqnarray}
 The second equation is known as the Biot-Savart law and links the velocity to the vorticity, with $K(x)=\frac{1}{2\pi}\frac{x^\perp}{|x|^2}$. In \cite{yudovich},  Yudovich proved the  global existence and uniqueness of solutions for integrable and bounded initial data. Moreover, these solutions are known to be Lagrangian and they can be recovered from their initial data and the mapping flow that describes the trajectories. A particular class is given by vortex patches where the vorticity is uniformly distributed in a bounded domain, that is, $\omega(t)={\bf{1}}_{D_t}$. In this case the dynamics reduces to the motion of one or multiple interfaces in the plane propelled by the self-induction and the interaction mechanisms. The global in time  persistence  of their boundary regularity in H\"older spaces $\mathscr{C}^{k,\alpha}$, with $k\geq 1$ and $\alpha\in(0,1)$,  was solved in  \cite{bertozzi-constantin, chemin, serfati}. Very recently, an ill-posedness result in  $\mathscr{C}^2$ has been obtained by Kiselev and Luo \cite{kiselev-luo}. For well-posedness and ill-posedness results with singular vortex patches, we refer to the recent papers of Elgindi and Jeong \cite{Elg2,Elg1}.

The 2D Euler equations can be seen as a Hamiltonian system, and thus it is quite natural from a dynamical system point of view to explore whether  time  periodic solutions around specific equilibrium states  may exist. This is a traditional subject in fluid dynamics with   a long history  and  a lot of contributions have been made over the past decades covering  several rich aspects on vortex motion. Explicit steady solutions in the patch form  are well known in the literature: the Rankine vortex (the circular patch) is stationary whereas the Kirchhoff ellipse \cite{kirchhoff:book} performs uniform rotation about its center with a constant  angular velocity related to its  aspect ratio. In 1978, Deem and Zabusky \cite{dz:vstates} gave some numerical evidences of the existence of non trivial rotating patches (also called V-states) living  {\it close} to the Rankine vortex. Some years later, this numerical  conjecture was analytically proved by Burbea \cite{burbea:motions}  using bifurcation theory \cite{kielhofer} and conformal maps. Unfortunately,  Burbea's work escaped the attention of PDE's community for long time and we have to wait  for around  thirty years  before the emergence of  intensive and  rich activity dealing with  the construction of   periodic solutions and the analysis of the  local structure of the bifurcation diagram associated with  different topological structures, see for instance \cite{ccg:reg, CCGS-2016-2,Hoz-Hassainia-Hmidi-Mateu:disc,Hmidi-Mateu:kirchhoff,Hmidi-Mateu:degenerate,hmv:reg,hmv:doubly, Hmidi-Mateu-Verdera:doubly}  and the references therein.

The Burbea patches rotate with an angular velocity $\Omega$ in $(0,\frac12)$, and then there have been many works studying the situation outside that range. First, Fraenkel \cite{Fraenkel} proved that the only simply-connected stationary patch ($\Omega=0$) is the Rankine vortex. Later, Hmidi \cite{hmidi:trivial} proved that if $\Omega<0$ (supplemented with a convexity assumption) and $\Omega=\frac12$ the only simply-connected rotating patch must be the disc. Finally, G\'omez-Serrano, Park, Shi and Yao \cite{GSPSY-rigidity} closed this question and proved that non trivial rotating patches cannot be found outside $[0,\frac12]$. Moreover, they  generalize Fraenkel's result to non constant vorticity and prove that any stationary compactly supported {\it smooth} solution with fixed sign of the 2D Euler equation must be radially symmetric up to a translation (here, they can include vorticities that are smooth in a bounded domain with a possible jump at the boundary). Their result is coherent with the vortex axi-symmetrization work by Bedrossian, Coti Zelati and Vicol \cite{Bedrossian-CotiZelati-Vicol:2019} who analyze the incompressible two dimensional Euler equations linearized
around a smooth radially symmetric, strictly monotone decreasing vorticity distribution. They show an inviscid damping phenomenon and get that the vorticity  converges weakly  to some 
radial symmetric profile  for large time.

In the literature, there are several works about the existence of nontrivial (non radial) stationary solutions, which are not patches. Those approach are based on the study of the characteristic trajectories of the system and are connected to the elliptic equation $\Delta \psi=\omega=F(\psi)$. In that context, Nadirashvilii \cite{Nadirashvili} studied the curvature of streamlines of smooth solutions. The local structure of stationary solutions in the nondegenerate case was explored by Choffrut and Sver\'ak in \cite{Sverak12}. 
Focusing on the elliptic equation, Ru\'iz \cite{Ruiz} proved some   symmetry results for compactly supported steady solutions, generalizing the work \cite{GSPSY-rigidity}. Recently, G\'omez-Serrano, Park and Shi \cite{GSPS:2021} constructed a family of nontrivial compactly supported stationary solutions with finite energy, using perturbative arguments and Nash-Moser scheme. Another interesting study has been conducted recently  by Coti Zelati, Elgindi and Widmayer  on    stationary solutions close to  shear flows of Kolmogorov and Poiseuille type in the periodic setting \cite{Elg3}. Flexibility and rigidity theorems of Liouville type for stationary flows  have been discovered recently by Constantin, Drivas and Ginsberg \mbox{in \cite{Constan21}.}

Notice that all the aforementioned  works on the V-states  concern connected patches. However, the situation is different when one looks for steady disconnected patches where the bifurcation is not well-adapted. Notice that  a few examples are known in the literature and one   of them was reported by  Lamb in \cite{lamb} who found a nontrivial example of touching counter-rotating pairs, where the vorticity inside the domain is not constant but given by a smooth function related to Bessel functions. The alternative for the bifurcation theory is the desingularization of steady point vortex system. This technique    was introduced by Marchioro and Pulvirenti \cite{mp:vortex} in another context to approximate in a weak sense Euler solutions by a vortex point system.  The equations governing  the point vortex model is  a collection of nonlinear  ODE's with singular potential characterizing the evolution of Dirac masses. This finite dimensional  system  admits a lot of steady states. For instance, two point vortices are steady: either they rotate with a constant angular velocity or they translate with a constant speed.
Using variational approach arguments, Turkington \cite{turkington:nfold} constructed pair of co-rotating patches. Similar construction with smooth profiles has been implemented in different contexts,  see  \cite{CLW:2014, CLZ:2021, CW:2022, DPMW:2020, SV:2010}. It seems that  the variational approach  does not give enough  information on  the topology and the geometry of the patch. To remedy to this defect,  Hmidi and Mateu \cite{hm:pairs} performed from the contour dynamics equation an {\it{ad hoc} } desingularization procedure in an infinite dimensional function space leading to the existence of co-rotating and counter-rotating convex and smooth vortex patches emanating from the vortex pairs. Their method is robust and flexible and it was used to cover more configurations such as the desingularization of  Thomson polygons \cite{garcia:choreography}, K\'arm\'an Vortex Street \cite{garcia:karman}, general patterns like nested polygons  \cite{Hassainia-Wheeler-points}. It was also successfully used to generate  asymmetric vortex pairs \cite{HH:asymmetric-pairs} or to achieve analogous construction   for more general active scalar equations \cite{hm:pairs}.

The common feature of the aforementioned long-lived structures is in their construction which is devised by perturbing  steady solutions. In general, we are able to view these solutions as one or more local branches emerging from the steady state. Thus, exploring the global behavior by tracking these branches is a question of great interest but  quite involved  leading to various open scenarios and questions.  
As an example, it is known from the numerical experiments \cite{Overman} that all the Burbea branches, excepted the ellipses, end with a singular patch with corners of angle $\frac\pi2.$ Recently,  Hassainia, Masmoudi and Wheeler \cite{hmw:global} proved through global bifurcation arguments, related to the work of Buffoni and Toland \cite{bt:analytic},  that at the end of the branches   the angular velocity of the patch must vanish  at some point located at the boundary, which is coherent with the formation of corners. The second result concerning this topic is the work of Garc\'ia and Haziot \cite{GH:2022} about the global bifurcation for the corotating vortex pairs found by Hmidi and Mateu in \cite{hm:pairs}. They obtained a self-intersection of the pair of patches together with the vanishing angular velocity at the end of the curve. The singularity given by the point vortices brings  an extra complexity to the problem which requires to adapt in a suitable way  the classical global analytic theorem in \cite{bt:analytic}.

Over the past few years, more development around time periodic vortex patches  has been implemented   to  other two-dimensional active scalar equations such as the generalized surface quasi-geostrophic equation or the quasi-geostrophic shallow water equations. In those systems, steady patches have been investigated  through the contour dynamics equations paired with  bifurcation theory or the implicit function theorem, see \cite{ CCGS-2016,  CCGS-2016-2,CCGS-2019, CCGS-2020, Hoz-Hassainia-Hmidi:doubly-gSQG, Hoz-Hassainia-Hmidi-Mateu:disc,  Hoz-Hmidi-Mateu-Verdera:doubly-euler, Dristchel-Hmidi-Renault, garcia:karman,garcia:choreography, GH:2022, GHM:doubly, GHM:2022, GHS:2020,GSPSY-sheets, Hassainia-Hmidi:vstates-gSQG, HH:asymmetric-pairs, Hassainia-Hmidi-Masmoudi:2021, hmw:global, Hassainia-Roulley:2022,  Hassainia-Wheeler-points, Hmidi-Mateu:kirchhoff, Hmidi-Mateu:degenerate, hm:pairs,Hmidi-Mateu-Verdera:doubly, Roulley:2022}. Variational tools in the spirit of  Turkington approach were also performed for active scalar equations as in \cite{ADPM:2021,CWWZ:2021,CQZZ:2021,CQZZ:2022, GCL:2021, GC:2021, GC:2022}. 

Very recently, some progress opening new perspectives  has been done on the existence of time quasi-periodic solutions close to Rankine vortices using KAM techniques and Nash Moser scheme. Indeed, Berti, Hassainia and Masmoudi confirm in \cite{Berti-Hassainia-Masmoudi:2022} these structures for Euler equations  near  any Kirchhoff ellipse provided that its eccentricity belongs to a Cantor like set. Similar results were obtained by Hmidi, Hassainia and Masmoudi \cite{Hassainia-Hmidi-Masmoudi:2021}  for the generalized $(\hbox{SQG})_\alpha$ equations, provided that the exponent of the fractional Laplacian lies in a massive Cantor set. In the same period, Hmidi and Roulley \cite{Hmidi-Roulley:2022} explored the emergence of quasi-periodic solutions for the quasi-geostrophic shallow water equation.  Similar results related to the boundary effects on the emergence of invariant tori have been discussed in \cite{Hassainia-Roulley:2022}. 

As to  the three dimensional cases, a lot of important results were obtained by using variational approach, see for instance \cite{Norbu72,Frank75}. We should also  point   the  3D quasi-geostrophic system which fits well with our discussion  since rotating simply and doubly connected volumes bifurcating  from generic revolution shapes have been proved very recently in  \cite{GHM:doubly,GHM:2022}.
 Other related topics concerning  compactly  steady solutions (not necessary patches) are discussed  by  Gravilov \cite{Gravilov} and Constantin, La and Vicol \cite{CLV}, where they obtained interesting examples of smooth compactly supported stationary solutions for the 3D Euler equations.  We refer also to \cite{DPMW:rings, Gallay-Sverak}  for  the leapfrogging vortex rings  and related subjects.

The main goal of the current  work is to explore some aspects  of the portrait phase around stationary solutions for Euler equations. From \cite{GSPSY-rigidity}, we know that any stationary vorticity with fixed sign must be radial. Hence, the general question is the following:
$$
\textnormal{{\it Do time periodic solutions still survive around  stationary radial  vortices with constant sign?}}
$$
The fact that the vorticity is not constant inside its support induces complex   spectral problem compared to the vortex patch problem. This subject turns out to be less  explored in the literature and only a few relevant results are known. The first result has been obtained recently by  Castro, C\'ordoba and G\'omez-Serrano \cite{CCGS-2019}, who managed to carefully mollify  a rotating vortex patch into a smooth rotating solution with non constant vorticity. The key point of their work is the use of the level sets of the vorticity in order to overcome some difficulties related to the degeneracy of  the spectral problem, which will be explained later. The second one refers to the work of Garc\'ia, Hmidi and Soler \cite{GHS:2020} on the perturbation of a quadratic profile supported in a { in the unit disc $\mathbb D$:} $\omega_0(x)=(A|x|^2+B){\bf 1}_{\D}(x)$. The equilibrium state is far away Rankine vortices and in this case we  highlight  new phenomena related to the scarcity and the abundance   of rigid time periodic  solutions with respect to the parameters of the quadratic profile. 

In this paper, we shall tackle the general problem and explore periodic solutions  in the vicinity of stationary solutions taking the form
\begin{equation}\label{intro-initial-cond}
\omega_0(x)=f_0(|x|){\bf 1}_{\D}(x),
\end{equation}
where $\mathbb D$ is the unit disc and $f_0$ is a monotonic smooth profile with a constant sign in the disc. Before stating our main result, we need to introduce some materials.
In the same spirit of  \cite{GHS:2020}, we will simultaneously perturb the density  $f_0$ by a non radial function and the boundary  using a conformal map $\Phi$ from the disc to a general bounded simply--connected domain $D$. More precisely, we shall look for rigid periodic solutions subject to the following ansatz,
\begin{align}\label{rotatingsol}
 \omega_0=(f\circ\Phi^{-1}) {\bf{1}}_D, \quad \omega(t,x)=\omega_0(e^{-it\Omega} x),  \quad  \forall x\in\R^2,
\end{align}
where $\Omega$ is the angular velocity, ${\bf{1}}_D$ is  the characteristic function of  a smooth simply connected domain $D$. Notice that in view of this ansatz  the solution rotates uniformly around the origin The real function $f:\overline{\D}\to \R$ denotes  the density profile  and $\Phi:\D\to D$  the conformal mapping  which are given by perturbing those of the stationary state \eqref{intro-initial-cond}, that is, 
$$
f=f_0+g,\quad \Phi=\mbox{Id}+\phi,
$$ 
where  $g$ and  $\phi$ are small enough in suitable function spaces.
By inserting this  ansatz  into Euler equations, we get the equivalent stationary equation on the vorticity  $\omega_0$ with velocity $v_0$ 
\begin{equation}\label{FirstEqB}
(v_0(x)-\Omega x^\perp)\cdot\nabla\omega_0(x)=0,\quad \forall x\in\R^2,
\end{equation}
{where $(x_1,x_2)^\perp=(-x_2,x_1)$. }Then, assuming that $f$ is not vanishing at the boundary of $D$ (a property  which is preserved by perturbation), we deduce the equivalent system 
\begin{align}
(v_0(x)-\Omega x^\perp)\cdot\nabla (f\circ\Phi^{-1})(x)&=0,\quad\hbox{in } D,\label{rotatingeq2d}\\
(v_0(x)-\Omega x^\perp)\cdot\vec{n}(x)&=0,\quad \hbox{on } \partial D,
\label{rotatingeq2b}
\end{align}
where $\vec{n}$ is the upward unit normal vector  to the boundary $\partial D$. We will refer to \eqref{rotatingeq2d} and \eqref{rotatingeq2b} as the density and boundary equations, respectively. Note that in the case where $f_0$ is a constant and $g=0$, the boundary equation agrees with the vortex patch problem.\\
Due to several constraints required along this paper from the spectral study to the stability of the function spaces, we need to impose to the initial profile  $f_0$  the following  conditions  dealing with its regularity and  monotonicity,
\begin{align}
&r\in[0,1]\mapsto f_0(r)=\tilde{f}_0(r^2)\mbox{ with }\tilde{f}_0\in\mathscr{C}^{2,\beta}([0,1]),\quad \beta\in(0,1).\hfill\label{H2}\tag{H1}\\
&\inf_{r\in(0,1]}\tfrac{{f}_0'(r)}{r}> 0.\hfill\label{H1}\tag{H2}
\end{align}
Notice that the last assumption implies in particular that $f_0$ is strictly increasing. However,  we can also deal with strictly decreasing profile by working with $-f_0$. 
We will distinguish two  cases for which we can achieve a complete answer on the emergence of periodic solutions depending on the sign of $f_0$ which is supposed to be constant. The two cases $f_0>0$ and $f_0<0$ offer two different scenarios is a similar way  to  the quadratic profile discussed in \cite{GHS:2020}. 
For the main statement we need to define  the {\it relative amplitude} of $f_0$ as
$$
A[f_0]:=\frac{f_0(1)}{f_0(0)},
$$
which is greater than $1$ according to the monotonicity assumption. The function spaces of H\"older type that will be used now  are discussed in  Section \ref{Funct-space}. Now, we are ready to formulate the main result of this paper. 
\begin{theo}\label{th-intro}
Let $0<\alpha<\beta<1$ and  $f_0$ satisfy \eqref{H2}--\eqref{H1}.
\begin{itemize}
\item[i)] {\it $($Scarcity  case$)$} Assume that $\,\inf_{r\in[0,1]}f_0(r)>0$, there exist $a>0$ and  $m_1\in\N$ with 
$$
m_1\geqslant \frac{1}{10(A[f_0]-1)},
$$
such that for any $m\in[3,m_1]\cap\N$, there exists a continuous curve  $\xi\in(-a,a)\mapsto (\Omega_\xi, f_\xi, \phi_\xi)\in\R\times \mathscr{C}^{1,\alpha}_{s,m}(\D)\times \mathscr{H}\mathscr{C}^{2,\alpha}_{m}(\D)$ such that the initial datum 
$$
\omega_0=(f\circ\Phi^{-1}){\bf 1}_{\Phi(\D)}, \quad f=f_0+f_\xi,\quad \Phi=\textnormal{Id}+\phi_\xi,
$$
generates  for $\xi\neq0$ a non radial m-fold solution for  Euler equations that rotates at constant angular velocity $\Omega_\xi$.
\item[ii)] {\it $($Abundance  case$)$} Assume that $\,\sup_{r\in[0,1]}f_0(r)<0$, there exist $m_2\in\N, a>0$, such that for any $m\geqslant m_2$ there exists  a continuous curve  $\xi\in(-a,a)\mapsto (\Omega_\xi, f_\xi, \phi_\xi)\in\R\times \mathscr{C}^{1,\alpha}_{s,m}(\D)\times \mathscr{H}\mathscr{C}^{2,\alpha}_{m}(\D)$ such that
$$
\omega_0=(f\circ\Phi^{-1}){\bf 1}_{\Phi(\D)}, \quad f=f_0+f_\xi,\quad \Phi=\textnormal{Id}+\phi_\xi,
$$
generates  for $\xi\neq0$ a non radial m-fold solution for  Euler equations that rotates at constant angular velocity $\Omega_\xi$.
\end{itemize}
\end{theo}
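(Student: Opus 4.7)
The plan is to apply the Crandall--Rabinowitz bifurcation theorem to a nonlinear functional $F(\Omega, g, \phi)=0$ built from the stationary equations \eqref{rotatingeq2d}--\eqref{rotatingeq2b}, where $g=f-f_0$ is the density perturbation and $\phi=\Phi-\mathrm{Id}$ is the boundary perturbation. First I would set up $F$ on $m$-fold symmetric H\"older function spaces so that the trivial branch $\{(\Omega, 0, 0):\Omega\in\R\}$ consists of radial stationary solutions for every $\Omega$; the $C^1$ regularity of $F$ follows from standard potential-theoretic estimates together with the regularity assumption \eqref{H2}, and the $m$-fold symmetry of the spaces is preserved by the Biot--Savart operator.

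The core of the argument is the spectral analysis of $DF(\Omega,0,0)$, which, after angular Fourier decomposition, diagonalizes into a family of one-dimensional problems indexed by the symmetry $m$. At each mode, the boundary equation \eqref{rotatingeq2b} becomes an algebraic link between $(g_m,\phi_m)|_{r=1}$ and $\Omega$, while the density equation \eqref{rotatingeq2d} reduces, after dividing by the non-vanishing factor $f_0'(r)/r$ furnished by \eqref{H1}, to a Sturm--Liouville ODE on $(0,1]$ of the shape
\begin{equation*}
\bigl(p_m(r)h_m'(r)\bigr)'-V_m(r;\Omega)\,h_m(r)=0,
\end{equation*}
with a positive weight $p_m$ and a variable potential $V_m$ whose sign is governed by $m$, $\Omega$, and the sign of $f_0$. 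Identifying critical angular velocities $\Omega_m$ for which a nontrivial solution exists, and proving that the kernel is one-dimensional via a Wronskian/positive-definiteness argument on an intermediate integral operator (as flagged in the abstract), produces the candidate bifurcation points.

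The dichotomy in the statement follows from the sign of $V_m$. In the \emph{scarcity} case $\inf f_0>0$, the potential is of a definite sign on $(0,1]$ for large $m$, and a Sturm-type comparison shows that no eigenfunction satisfying the boundary condition exists once $m>m_1\asymp 1/(A[f_0]-1)$; conversely for $m\in[3,m_1]$ one can tune $\Omega_m$ so that a simple eigenfunction appears. In the \emph{abundance} case $\sup f_0<0$, the potential changes sign and the oscillation pattern is reversed, yielding an eigenvalue $\Omega_m$ for every $m\geqslant m_2$. In both cases the transversality condition $\partial_\Omega DF(\Omega_m,0,0)\notin\mathrm{Range}\,DF(\Omega_m,0,0)$ is reduced, via the Fredholm alternative and the self-adjointness of the Sturm--Liouville operator, to a non-vanishing weighted $L^2$-pairing against the eigenfunction, which can be evaluated using its sign. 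Crandall--Rabinowitz then furnishes the local curve $\xi\mapsto(\Omega_\xi,f_\xi,\phi_\xi)$.

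The principal obstacle I anticipate lies at the low modes and at the singular endpoint $r=0$. Unlike \cite{GHS:2020}, where the explicit quadratic profile permitted Bessel-function computations, here $f_0$ is arbitrary and qualitative ODE tools must replace closed-form solutions. Ruling out $m=1$ (which would correspond to a translation) and possible spurious resonances at $m=2$ requires a Pr\"ufer transformation to control the number of zeros of the solution on $(0,1]$, together with Kneser's non-oscillation criterion to certify that the Sturm--Liouville problem for the first mode admits no admissible eigenvalue in the relevant $\Omega$-range. The delicate interplay between the sign of $V_m$, the geometry of $f_0$, and the locus of the spectrum is precisely where the "hidden positive-definiteness" of the abstract will be invoked, and it is this ODE-theoretic bookkeeping rather than the abstract bifurcation machinery that I expect to consume most of the work.
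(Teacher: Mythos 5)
Your overall scheme (Crandall--Rabinowitz on an $m$-fold symmetric H\"older setting, reduction to one-dimensional Sturm--Liouville problems, transversality via a weighted pairing with the kernel generator) is the right skeleton, but there is a genuine gap at the very first step: you propose to apply the bifurcation theorem directly to the functional built from the raw system \eqref{rotatingeq2d}--\eqref{rotatingeq2b}. This cannot work as stated. The linearization of the density equation \eqref{rotating-system-1} at $(\Omega,0,0)$ annihilates \emph{every} radial perturbation $g_{\textnormal{radial}}$ (since $f_0+g_{\textnormal{radial}}$ is again a stationary solution for every $\Omega$), so the kernel is infinite dimensional in the spaces $\mathscr{C}^{1,\alpha}_{s,m}(\D)$, which contain the radial modes; moreover the principal part of the linearized density operator is a transport term in $\partial_\theta$ only, so the linearization is not Fredholm between the H\"older spaces that stabilize the nonlinearity. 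The missing idea is the rigidity reformulation of Section \ref{Sec-Rigidity}: one replaces \eqref{rotating-system-1} by the sufficient condition $\nabla(f\circ\Phi^{-1})=\mu\big(\Omega,(f\circ\Phi^{-1})\big)(v_0-\Omega x^\perp)^\perp$ with $\mu$ pinned down by \eqref{mutrivial}, and integrates it into the functional $G$ of \eqref{density-2} involving $\mathcal{M}(\Omega,\cdot)$. Only this reformulated functional (after first solving the boundary equation by the Implicit Function Theorem away from the singular set $\mathcal{S}^m_{\textnormal{sing}}$, so that $\phi=\mathcal{N}(\Omega,g)$ and one bifurcates in $g$ alone) has the Fredholm-of-index-zero structure your argument presupposes. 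Without some such filtering of the radial degeneracy your plan stalls before the spectral analysis begins.

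Two further points of substance. First, you assign Pr\"ufer/Kneser to ruling out $m=1$ and ``spurious resonances at $m=2$''; in fact the low modes are simply excluded by the statement ($m\geqslant 3$, resp.\ $m\geqslant m_2$) and by the $m\N$-symmetry of the spaces, while Pr\"ufer transformation and Kneser's non-oscillation theorem are needed for the \emph{radial} mode $n=0$ in the focusing regime $\Omega>\kappa_2$, to show that $\mathbb{L}_0^\Omega$ has trivial kernel outside a finite exceptional set $\mathbb{S}$ (Proposition \ref{radialfunctions}); in the defocusing regime positivity of $\mathcal{L}^\Omega_0$ suffices. Second, the scarcity/abundance dichotomy is not obtained by a Sturm comparison showing the potential is ``of definite sign for large $m$'': the sign of the potential $\mu^0_\Omega$ is fixed by the location of $\Omega$ relative to $[\kappa_1,\kappa_2]$, independently of $m$. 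The actual mechanism is the dispersion relation $\zeta_m(\Omega)=0$ of Lemma \ref{Lem-disper}, whose roots are produced by the intermediate value theorem near the singular values $\widehat\Omega_m$; when $f_0>0$ these accumulate at $\kappa_2$ from below so only the finitely many modes with $m\lesssim \kappa_1/(\kappa_2-\kappa_1)$ yield admissible $\Omega_m<\kappa_1$, whereas when $f_0<0$ they accumulate at $\kappa_2$ from above and every large $m$ yields an $\Omega_m\in(\kappa_2,\infty)$, with the higher harmonics $nm$, $n\geqslant 2$, excluded by quantitative separation of $\Omega_m$ from $\widehat\Omega_{nm}$. These are the estimates that give the explicit threshold $m_1\geqslant \frac{1}{10(A[f_0]-1)}$, which your sketch does not reach.
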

From this statement, we see two different regimes related to the sign of $f_0.$ When the initial profile  is nonnegative, then we are in the scarcity case and we find only a finite number of bifurcating curves. Their number depends on the relative amplitude $A[f_0]\geqslant1$: we get more and more when this number is close to $1$ meaning that  the profile is varying slowly.   
 On the other hand, for the nonpositive and increasing profiles, we get an infinite  family of nontrivial bifurcating curves associated with  large symmetry.
 
Next, we shall   briefly outline the main ideas  of the proof of Theorem \ref{th-intro}. First, we write the system  \eqref{rotatingeq2d}-\eqref{rotatingeq2b} using the change of coordinates, and then perform  the Implicit Function Theorem with  the boundary equation \eqref{rotatingeq2b}. This is achieved provided that $\Omega$ is excluded from the following   singular set 
\begin{equation*}
\mathcal{S}^m_{\textnormal{sing}}=\bigg\{\widehat{\Omega}_n:=\int_0^1sf_0(s)ds-\frac{n+1}{n}\int_0^1s^{2n+1}f_0(s)ds,\quad    \forall n\in m\N^\star\cup\{\infty\} \bigg\},
\end{equation*}
and we find  $\phi$  as an implicit  function depending on $(\Omega,g)$,
$$
\phi=\mathcal{N}(\Omega,g).
$$
Second, we should deal with the density equation which has the defect to be degenerating at the radial direction when we linearize at the equilibrium state. As a consequence, we loose  the Fredholm structure which is required in the bifurcation techniques. Then we proceed as in  \cite{GHS:2020} where we should modify the equation  \eqref{rotatingeq2d} by imposing some rigidity in the resolution of the nonlinear elliptic equation. This scheme will be   explained in Section \ref{Sec-Rigidity}. Hence, the outcome of this step is to be able to  reformulate the density equation into
\begin{equation}\label{intro-density-2}
\widehat{G}(\Omega,g):=G(\Omega,g,\mathcal{N}(\Omega,g))=0,
\end{equation}
where
$$
G(\Omega,g,\phi)(z):=\mathcal{M}(\Omega,f(z))+\frac{1}{2\pi}\int_{\D}\log|\Phi(z)-\Phi(y)|f(y)|\Phi'(y)|^2 dA(y)-\frac12\Omega|\Phi(z)|^2-\lambda,
$$
and $\lambda$ is a constant such that
\begin{align*}
\lambda=&\mathcal{M}(\Omega,f_0(r))-\int_r^1 \frac{1}{\tau}\int_0^\tau sf_0(s)dsd\tau-\frac12 \Omega r^2.
\end{align*}
The role of the function $\mathcal{M}$ is to guarantee that the radial case $g=0=\phi$ is still a trivial solution to this model for any $\Omega$ belonging to a suitable open interval (implying in turn that $\lambda$ is a constant). By this way, we find a natural scheme to generate $\mathcal{M}$, which is described below,

$$
\mathcal{M}(\Omega,t)=\int_{a}^t \frac{ds}{\mu(\Omega,s)},\quad \mu(\Omega,t)=\frac{2(\tilde{f}_0'\circ \tilde{f}_0^{-1})({t})}{\Omega-\frac12\int_0^1 \tilde{f}_0\big(s\tilde{f}_0^{-1}({t})\big)ds}.
$$
Remark that
applying $\mu$ to the trivial solution, we arrive to 
$$
\mu(\Omega,f_0(r))=\frac{f_0'(r)}{r\left(\Omega-\int_0^1 sf_0(sr)ds\right)}:=\mu_\Omega^0(r).
$$
We point out that $\mu(\Omega,\cdot)$ is only defined in the interval $[a,b]:=\textnormal{Range }(\tilde{f}_0)$ and an extension procedure outside this segment is required. 
More details  can be found in Section \ref{Sec-Rigidity}. To implement Crandall-Rabinowitz theorem in bifurcation theory \cite{rabinowitz:simple,kielhofer} and generate a nontrivial solution to the equation \eqref{intro-density-2}, one needs to check the required spectral properties for the linearized operator at the equilibrium.  From Section \ref{Fourier-expans1}, we obtain for any test function
$$r e^{i\theta}\in\D\mapsto h(re^{i\theta})=\sum_{n\in\N}h_n(r)\cos(n\theta)
,$$
the following structure for the linearized operator, see \eqref{lin-op},
\begin{align*}
D_g \widehat{G}(\Omega,0)[h](re^{i\theta})=&\mathbb{L}_0^\Omega [h_0](r)+\sum_{n\geqslant 1} \cos(n\theta)\mathbb{L}_n^\Omega [h_n](r).
\end{align*}
where the operators $\mathbb{L}_n^\Omega$ are of integral type. As we shall see in Section \ref{Sec-KerneLn}, the kernel equation reduces to solving the collection of the  one dimensional linear problems. For $n=0,$
$$
\big(\hbox{Id}-\sigma_\Omega \mathcal{L}^\Omega_0\big)[h_0](r)=0\quad\hbox{with}\quad K_0(r,s)= \nu_\Omega(r)\nu_\Omega(s)\left[\ln (\tfrac{1}{r}){\bf 1}_{[0,r]}(s)+\ln (\tfrac{1}{s}){\bf 1}_{[r,1]}(s)\right]
$$
and for $n\geqslant 1,$
\begin{equation}\label{KK-EE}
\big(\hbox{Id}-\sigma_\Omega \mathcal{L}^\Omega_n\big)[h_n](r)=-\sigma_{\Omega}\frac{\nu_\Omega(r)}{\nu_\Omega(1)}\frac{r G_n(r)}{G_n(1)} \mathcal{L}^\Omega_n(1), n\geqslant 1
\end{equation}
with
\begin{align*}
\mathcal{L}^\Omega_n[h](r)=\int_0^1 {K}_n(r,s)h(s) d\lambda_\Omega(s),\quad d\lambda_\Omega(s):=\frac{s}{\nu_\Omega(s)}ds
\end{align*}
where  the involved  kernel is symmetric  and takes  the form
\begin{align*}
{K}_n(r,s)=\frac{\nu_\Omega(r)\nu_\Omega(s)}{2n}\left[\left(\frac{r}{s}\right)^n {\bf 1}_{\{r\leqslant s\leqslant 1\}}+\left(\frac{s}{r}\right)^n {\bf 1}_{\{0\leqslant s\leqslant r\}}\right].
\end{align*}
The function  $G_n$ is defined in \eqref{Gn-def} and   $\nu_\Omega$ is positive with
\begin{align*}
\nu_\Omega(r):=\sigma _\Omega\,\mu_\Omega^0(r)=|\mu_\Omega^0(r)|,\quad\hbox{with}\quad 
 \sigma_\Omega:=\begin{cases}
-1,\quad \Omega\in(-\infty,\kappa_1),\\
1,\quad \Omega\in(\kappa_2,+\infty).
\end{cases}
\end{align*}
In this formulation we have assumed that $\Omega\notin[\kappa_1,\kappa_2]$ in order to guarantee that $\mu_\Omega^0$ is not vanishing inside $[0,1]$, where
$$
\kappa_1=\inf_{r\in[0,1]} \int_0^1 sf_0(sr)ds, \quad \kappa_2=\sup_{r\in[0,1]} \int_0^1sf_0(sr)ds.
$$
This gives that $\mu_\Omega^0$ admits a constant sign depending on the location of $\Omega.$ It is negative for $\Omega<\kappa_1$ and this regime is called {\it defocusing}. However, when  $\Omega>\kappa_2$, the function $\mu_\Omega^0$ is positive and this regime is called {\it focusing }. This terminology is borrowed from Schr\"odinger equations and it  is justified by the fact that the resolution of the multiple integral equations is related to the following singular Sturm Liouville problems, see \eqref{diff-Fn},
\begin{align}\label{Fn-DEEE}
F_{n,\Omega}''+\frac{2n+1}{r}F'_{n,\Omega}+\mu_\Omega^0 F_{n,\Omega}=0,\quad  F_{n,\Omega}(0)=1,\quad n\geqslant1.
\end{align}
Then the potential $\mu_\Omega^0$ is repelling ({\it defocusing regime}) for $\Omega<\kappa_1$  and attracting ({\it focusing regime}) for $\Omega>\kappa_2.$
At this stage, we are led to solve these equations with respect to the parameters $n$ and $\Omega$. For the kernel equation at the level $n=0$, we show in Proposition \ref{radialfunctions}, that zero is the only solution provided that $\Omega\in (-\infty,\kappa_1)\cup((\kappa_2,\infty)\backslash\mathbb{S})$, where the set $\mathbb{S}$ is finite. The proof in the {\it defocusing} case ($\sigma_\Omega=-1$) is related to the following nice property which states that the operator   $\mathcal{L}^\Omega_0:L^2(d\lambda_\Omega)\to L^2(d\lambda_\Omega)$ is self-adjoint positive definite compact operator, implying in particular  that all the spectrum is contained in the positive region $(0,\infty).$ Consequently, the operator $\hbox{Id}+ \mathcal{L}^\Omega_0$ is invertible and therefore its kernel is trivial.  In  the {\it focusing} case ($\sigma_\Omega=1$),  the proof turns out to be more tricky and subtle. First we transform the kernel equation into a second order differential equation of Sturm-Liouville type
$$
(rF^\prime)^\prime+r \nu_\Omega(r)F=0,\quad\hbox{with}\quad F(1)=0.
$$
Due to the singular structure around zero, we expect that smooth solutions in $[0,1]$ form a one-dimensional vector space parametrized by $\Omega$ and generated by a nontrivial element  denoted by $F_\Omega$ and normalized as $F_\Omega(0)=1$. Then we have to find the set $\mathbb{S}$  covering all the  $\Omega>\kappa_2$ such that $F_\Omega$ matches with the boundary condition $F_{\Omega}(1)=0$. Here, due to the singular structure, we are not able to use the shooting method and we find another elegant way to tackle this control problem using Pr\"{u}fer transformation together with Kneser's  Theorem \cite{Kneser} on the non-oscillation phenomenon.\\
As to the kernel equation \eqref{KK-EE} for $n\geqslant1$, the  resolution is more complex due to the variable coefficients in $r$, which contrasts with the vortex patch problem where the operator is simply given by a Fourier multiplier in the angular variable and constant in $r$.  The first step is to characterize analytically the set of $\Omega$ associated with a nontrivial kernel, leading to what we call the {\it dispersion equation.} This will be performed  along  Section \ref{Sec-Disper-eq} using the generator $F_{n,\Omega}$ defined by \eqref{Fn-DEEE}. We find there, see Lemma \ref{Lem-disper},  that the dispersion equation takes the form
\begin{equation}\label{intro-disp}
\zeta_n(\Omega)=0,
\end{equation}
where
$$
\zeta_n(\Omega)=F_{n,\Omega}(1)\left(\Omega-\frac{n}{n+1}\int_0^1 sf_0(s)ds\right)+\int_0^1F_{n,\Omega}(s) s^{2n+1}\big(f_0(s)-2\Omega\big)ds,
$$
The structure of $\zeta_n$ is connected to the profile $f_0$ and exploring the zeroes in $\Omega$  for  this highly nonlinear function is not trivial. Notice that the dependence of the generator $F_{n,\Omega}$ in $\Omega$ is not explicit. This situation is quite  different from the quadratic case analyzed in  \cite{GHS:2020}, where $F_{n,\Omega}$ was  computed explicitly through  {\it Gauss hypergeometric function}. In this latter case,  the analysis of the zeroes for $\zeta$ takes into account of several specific  algebraic and analytic properties  of $F_{n.\Omega}.$ In the current situation, one should find the alternative to  the explicit form by developing qualitative properties around $F_{n,\Omega}$, see Section \ref{Sec-STE}. Then the resolution of \eqref{intro-disp} depends on the region where $\Omega$ is taken and the sign of $f_0$. Actually, when $f_0 >0$ ({\it{scarcity case}}) we find that the dispersion equation admits at most a finite number of solution in $(-\infty,\kappa_1)$ depending on the relative amplitude of $f_0$. However, when $f_0<0$ ({\it{abundance case}}), we find a countable family of solutions located in $(\kappa_2,\infty)$. As a matter of fact, these solutions live in the vicinity of the singular set $\mathcal{S}^m_{\textnormal{sing}}$ introduced before. The formal intuition about the  transition regimes between the scarcity and the abundance can be made more clear using the following image. When $f_0>0$, the elements  of the  singular set form an increasing sequence converging to $\kappa_2$. So, the set $\{n\geqslant 1, \widehat{\Omega}_n\notin[\kappa_1,\kappa_2]\}$ is finite and their corresponding values $\widehat{\Omega}_n$ are necessary located below $\kappa_1.$ Around most of these points, we are able to find solutions to the dispersion equation. As to the case $f_0<0$, the singular sequence becomes decreasing and lives in the region $(\kappa_2,\infty)$ and we show that for large symmetry each element  $\widehat{\Omega}_n$ is paired with at least one solution ${\Omega}_n$ to \eqref{intro-disp}. 

Another difficulty stems from  the  transversality condition in Crandall-Rabinowitz theorem, which is not at all  obvious in our setting. It will be discussed in Section \ref{Sec-transv}. The  goal is  to encode it in an analytical form using the kernel structure, see Proposition \ref{prop-element-kernel} together with  the  range characterization as the kernel of a linear form, see Proposition \ref{Prop-Range22}. Then by making refined estimates we are able to check the transversality in the two different configurations $f_0>0,$ and  $f_0<0$.

The paper is organized as follows. In Section \ref{Sec-formulation}, we review the formulation for a rotating solution to the 2D Euler equations together with the derivation of the density equation \eqref{intro-density-2}, and we also provide there the function spaces used through all the work. Later in Section \ref{Sec-lin-op}, we compute the linear operator of $\widehat{G}$ around the trivial solution $(\Omega,0)$ using Fourier expansion. After that, we explore its Fredholm structure. In Section \ref{Sec-kernel}, we shall  focus on the kernel study. First, we characterize the kernel with the dispersion relation and  later solve it for the different cases $f_0>0$ and $f_0<0$. At the end of the section, we provide the kernel generators and discuss their regularity. Section \ref{Sec-transv} is devoted to the proof of  the transversal condition needed to apply Crandall-Rabinowitz theorem. Finally, in Section \ref{Sec-main-result} we collect all the previous analysis in order  to prove Theorem \ref{th-intro}.

\section{Rigid time periodic solutions}\label{Sec-formulation}
In this section, we intend to  carefully reformulate the equations governing  rotating solutions to the 2D Euler equations. To be precise about this terminology, given an initial data $\omega_0$ we say that  $\omega$   is a rigid time periodic solution to  the system \eqref{Eulereq} or equivalently (up to a spatial translation)  a rotating solution
 with constant angular velocity $\Omega\in\R$ if 
$$
\omega(t,x)=\omega_0(e^{-i\Omega t}x).
$$
Then, inserting  this  ansatz into \eqref{Eulereq} yields to the stationary equation on the profile $\omega_0$
\begin{equation}\label{rotating-eq}
(v_0(x)-\Omega x^\perp)\cdot \nabla\omega_0(x)=0, \quad x\in\R^2,
\end{equation}
where $(x_1,x_2)^\perp=(-x_2,x_1)$. The goal now is to derive from this general form the equations associated to localized rotating solutions. In which case, $\omega_0(x)=\omega_0(x){\bf{1}}_D$ for some smooth bounded domain $D$. Notice that we have identified the vorticity $\omega_0$ defined on $\R^2$ with its density still denoted by $\omega_0$ and only defined in the closed set $\overline{D}$. We assume that the density function is smooth in  $\overline{D}$.  Then from straightforward arguments, the equation \eqref{rotating-eq} can be written in the weak sense in the form of two coupled equations
\begin{align}
(v_0(x)-\Omega x^\perp)\cdot \nabla\omega_0(x)=0,\quad x\in D,\label{rotating-system-1}\\
\omega_0(x)\big[(v_0(x)-\Omega\, x^\perp)\cdot n(x)\big]=0,\quad x\in \partial D,\label{rotating-system-2}
\end{align}
where $n$ is a normal vector to $\partial D$. A general class of solutions to the system \eqref{rotating-system-1}-\eqref{rotating-system-2} is given by radial functions corresponding to $D=\D$ the unit disc  and the density $\omega_0$ a radial smooth function.  Actually, any function taking the form
\begin{equation}\label{trivial-sol}
\omega_0^{\textnormal{trivial}}(x)=f_0(|x|){\bf 1}_{\D},
\end{equation}
with $f_0$ being a smooth function, is a solution to \eqref{rotating-system-1}-\eqref{rotating-system-2} for any arbitrary value of $\Omega\in\R.$ In \eqref{trivial-sol}, the density $f_0$ is a generic radial function that we aim to perturb. Proceeding as in  \cite{GHS:2020}, we plan to construct nontrivial solutions to the above system by perturbing in a suitable way the trivial solution \eqref{trivial-sol}.  For this aim,  we make use of a conformal map $\Phi:\D\rightarrow D$ and look for rotating solution in the form
\begin{equation}\label{initial-data-ansatz}
\omega_0=(f\circ \Phi^{-1}){\bf 1}_D,
\end{equation}
with $f=f_0+g$ and $g:\D\to\R$ small enough {in a suitable norm.} To avoid the degeneracy of the boundary equation \eqref{rotating-system-2} we have to assume that  $f_0(1)\neq 0$ that will remain true for $f$. As the domain $D$ is a small smooth perturbation of $\mathbb{D}$ then this can be encoded through the conformal mapping by simply imposing  that is
$$\Phi=\textnormal{Id}+\phi,$$
with $\phi$ being small enough in a strong topolgy.
In what follows, we will refer to \eqref{rotating-system-1} or \eqref{rotating-system-2} as the {\it density equation} or the {\it boundary equation}, respectively.

\subsection{Boundary equation}
Following  \cite[Section 2]{GHS:2020}, inserting \eqref{initial-data-ansatz} into \eqref{rotating-system-2} and  using the Biot-Savart law, change of variables and the complex notations, one gets the boundary equation in terms of the new unknowns $(g,\phi)$,
\begin{align}\label{Bound-eq1}
F(\Omega,g,\phi)(w)=0,\quad w\in\T,
\end{align}
where the nonlinear functional $F$ is defined by 
$$
F(\Omega,g,\phi)(w):=\textnormal{Im}\left[\left(\Omega\overline{\Phi(w)}-\frac{1}{2\pi}\int_{\D} \frac{f(y)}{\Phi(w)-\Phi(y)}|\Phi'(y)|^2 dA(y)\right)\Phi'(w)w\right].
$$
According to  \cite[Proposition B.5]{GHS:2020}, one may easily check that $F(\Omega,0,0)=0$ for any $\Omega\in\R$, which is compatible  with the stationary solution \eqref{trivial-sol}.
We remark that by taking $g\equiv 0$ and $f_0\equiv 1$, the foregoing boundary equation reduces to the vortex patch problem  introduced in \cite{burbea:motions} and well-explored later  in various directions  \cite{ Berti-Hassainia-Masmoudi:2022, CCGS-2016,  CCGS-2016-2,CCGS-2019, CCGS-2020, Hoz-Hassainia-Hmidi:doubly-gSQG, Hoz-Hassainia-Hmidi-Mateu:disc,  Hoz-Hmidi-Mateu-Verdera:doubly-euler, Dristchel-Hmidi-Renault, garcia:karman,garcia:choreography, GH:2022, GHM:doubly, GHM:2022, GHS:2020,GSPSY-sheets, Hassainia-Hmidi:vstates-gSQG, HH:asymmetric-pairs, Hassainia-Hmidi-Masmoudi:2021, hmw:global, Hassainia-Roulley:2022,  Hassainia-Wheeler-points, Hmidi-Mateu:kirchhoff, Hmidi-Mateu:degenerate, hm:pairs,Hmidi-Mateu-Verdera:doubly, Hmidi-Roulley:2022, Roulley:2022}, and references therein.

\subsection{Transformation of the density equation}\label{Sec-Rigidity}
As we discussed in the introduction of \cite{GHS:2020}, some delicate problems are connected to the density  equation in its form \eqref{rotating-system-2} making hard to implement bifurcation tools. The difficulties are related first to the fact that any radial perturbation is still a solution and this will generate a big kernel for the linearized operator. Second, the linearized operator at the equilibrium state  is not of Fredholm type.   
To be more precise, notice that the density equation \eqref{rotating-system-1} can be written as
$$
H(\Omega,\omega_0)(re^{i\theta}):=\left(\frac{v^0_\theta}{r}-\Omega\right)\partial_\theta \omega_0+v_r^0\partial_r \omega_0=0,
$$
where we use the decomposition $v_0(re^{i\theta})=v_\theta^0(r,\theta) x^\perp+v_r^0(r,\theta) x$ and $x=re^{i\theta}$. By fixing  $D=\D$, the problem reduces to find some non radial density $g$ such that
$$
H(\Omega,f_0+g)\equiv 0.
$$
However, one gets
$$
H(\Omega,f_0+g_{\textnormal{radial}})\equiv 0,
$$
for any radial function $g_{\textnormal{radial}}$. That implies that any radial function belongs to the kernel of $D_g H(\Omega,0)$ which amounts to be with an infinite dimensional kernel. Moreover,  the linearized operator around the trivial solution takes the form
$$
D_g H(\Omega,0)h(re^{i\theta})=\left(\frac{v^0_\theta}{r}-\Omega\right)\partial_\theta h_0+ K[h](r,\theta)f_0'(r),
$$
where $K$ is a compact integral  operator. It follows that the leading transport part of this operator  depends only on the angular derivative $\partial_\theta,$ which turns out to be in contrast with  the nonlinear operator $H$ which is built upon  both derivatives. This implies that the linearized operator is not of Fredholm type with the function spaces used   to stabilize the nonlinear functional. 
These problems were analyzed in  \cite{CCGS-2019, GHS:2020}, where the authors avoid to use the formulation \eqref{rotating-system-2} by making some rigidities using different ways.  In the first work \cite{CCGS-2019}, the authors focuses on the desingularization of a patch through smooth profiles using level sets reformulation.   However, in \cite{GHS:2020} the authors proceeds in a different way and reformulate the density equation by taking into account that the density is constant along the level sets of the relative stream function. Notice that  in \cite{GHS:2020} the authors work with a radial quadratic profile $f_0$  because of  the spectral analysis of the linear operator which is much more tractable in this special case, compared to the general one.\\
In this work and in order  to filter radial solutions from the equation \eqref{rotating-system-2}  we follow the same strategy of \cite[Section 4]{GHS:2020}.

Actually, by taking any scalar function $\mu$ and imposing the structure 
\begin{equation}\label{density-eq-1}
\nabla (f\circ\Phi^{-1})(x)=\mu(\Omega, (f\circ\Phi^{-1})(x))(v_0(x)-\Omega x^\perp)^\perp,
\end{equation}
we find from \eqref{initial-data-ansatz} that the equation  \eqref{rotating-system-1} is automatically satisfied. We emphasize that this is only a sufficient condition for  \eqref{rotating-system-1}.  Since $\mu$ is arbitrary, then one needs to fix it  in order to get with $\Phi=\hbox{Id}$ that the radial profile $f_0$  is a solution to \eqref{density-eq-1} for any $\Omega$ belonging in some nontrivial subsets of $\R$. According to \cite[Section 4.1]{GHS:2020}, this assumption turns out from an explicit computation to be equivalent to the constraint
\begin{equation}\label{mutrivial}
\mu(\Omega,f_0(r))=\frac{f_0'(r)}{r\left(\Omega-\int_0^1 sf_0(sr)ds\right)}, \quad r\in(0,1].
\end{equation}
{It follows from  \eqref{H2}  that
\begin{equation}\label{mu-r2}
\mu(\Omega,f_0(r))=\mu(\Omega,\tilde{f}_0(r^2))=\frac{2\tilde{f}_0'(r^2)}{\Omega-\frac12\int_0^1 \tilde{f}_0(sr^2)ds},
\end{equation}
or equivalently
$$
{\mu}(\Omega,\tilde{f}_0(r))=\frac{2\tilde{f}_0'(r)}{\Omega-\frac12\int_0^1 \tilde{f}_0(sr)ds},\quad r\in(0,1].
$$
This  makes sense for strictly monotonic profiles $\tilde{f}_0$, as assumed in \eqref{H1},  and when $\Omega\notin [\kappa_1,\kappa_2]$ with
\begin{align}
\kappa_1:=\inf_{r\in[0,1]}\int_0^1 s {f_0}(rs)ds=\frac12 f_0(0),\label{kappa1}\\
\kappa_2:=\sup_{r\in[0,1]}\int_0^1 s{f_0}(rs)ds=\int_0^1 s{f_0}(s)ds.\label{kappa2}
\end{align}
We observe that by  $\Omega\notin [\kappa_1,\kappa_2]$ together with the monotonicity assumption \eqref{H1}, the function $\mu(\Omega,{f}_0)$ is well defined and   keeps a constant sign. Indeed,  it is positive for  $\Omega>\kappa_2$ and negative for  $\Omega<\kappa_1.$  
Therefore,  we should get 
\begin{equation}\label{Restr-mu}
{\mu}(\Omega,t)=\frac{2(\tilde{f}_0'\circ \tilde{f}_0^{-1})(t)}{\left(\Omega-\frac12\int_0^1 \tilde{f}_0(s\tilde{f}_0^{-1}(t))ds\right)},
\end{equation}
for any $t\in\mbox{Range}(\tilde{f}_0)$. From \eqref{H2}-\eqref{H1}  we infer that the range of $\tilde{f}_0$ is a segment $[a,b]$. Moreover, from the composition laws we get that ${\mu}(\Omega,\cdot)$ is of class $\mathscr{C}^{1,\alpha}([a,b])$. Now, we construct   ${\mu}(\Omega,\cdot)$ as any  extension of the right hand side of \eqref{Restr-mu} in a small neighborhood $[a-\epsilon,b+\epsilon]$ of $[a,b]$ and of class { $\mathscr{C}^{1,\alpha}([a-\epsilon,b+\epsilon])$}. By this way, we can   define ${\mu}(\Omega,f(z))$ for any $z\in\mathbb{\D}$ and with  
 $f$ a {\it small} perturbation of $f_0$.} Now, we  define
\begin{align}\label{def-MM}
\mathcal{M}(\Omega,t):=\int_{a}^t \frac{ds}{{\mu}(\Omega,s)},\quad \forall t\in[a-\epsilon,b+\epsilon]
\end{align}
 and integrate \eqref{density-eq-1} obtaining
\begin{equation}\label{density-2}
G(\Omega,g,\phi)(z):=\mathcal{M}(\Omega,f(z))+\frac{1}{2\pi}\int_{\D}\log|\Phi(z)-\Phi(y)|f(y)|\Phi'(y)|^2 dA(y)-\frac12\Omega|\Phi(z)|^2-\lambda=0,
\end{equation}
for any $z\in\D$, and where
$$
\lambda:=\mathcal{M}(\Omega,f_0(r))-\int_r^1 \frac{1}{\tau}\int_0^\tau sf_0(s)dsd\tau-\frac12 \Omega r^2.
$$
Let us point out that $\lambda$ does not depend on $r$ by using the relation between $\mu$ and $f_0$, and also using the estimates given in \cite[Proposition B.5]{GHS:2020}. From the choice of $\lambda$ we get $G(\Omega,0,0)\equiv 0$. For more details about the expression of $G$ we refer to \cite[Section 4.1]{GHS:2020}. 
We shall end this section with the following result dealing with the regularity of the function $\mathcal{M}(\Omega,\cdot)$ that will be used later.
\begin{lem}\label{lem-regM}
For any small $\epsilon>0$, there exists $\delta>0$ small enough such that if $\Omega\notin [\kappa_1-\delta,\kappa_2+\delta]$, the function $t\in [a-\epsilon,b+\epsilon]\mapsto \mathcal{M}(\Omega,t)$ is of class on $\mathscr{C}^{2,\alpha}$ and
$$
\forall t\in[a,b],\quad  \partial_t\mathcal{M}(\Omega,t)=\frac{\Omega-\frac12\int_0^1 \tilde{f}_0(s\tilde{f}_0^{-1}(t))ds}{2(\tilde{f}_0'\circ \tilde{f}_0^{-1})(t)}\cdot
$$
\end{lem}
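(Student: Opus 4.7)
The plan is to exploit the fundamental theorem of calculus directly on the definition \eqref{def-MM} of $\mathcal{M}$. Assuming for the moment that $\mu(\Omega,\cdot)$ is continuous and does not vanish on $[a-\epsilon,b+\epsilon]$, one obtains
$$
\partial_t\mathcal{M}(\Omega,t)=\frac{1}{\mu(\Omega,t)},\qquad t\in[a-\epsilon,b+\epsilon],
$$
and substituting the explicit expression \eqref{Restr-mu} for $t\in[a,b]$ immediately yields the claimed formula after inverting the fraction. To upgrade the regularity from $\mathscr{C}^{1,\alpha}$ to $\mathscr{C}^{2,\alpha}$, I would show that $1/\mu(\Omega,\cdot)\in\mathscr{C}^{1,\alpha}([a-\epsilon,b+\epsilon])$; since $\mu(\Omega,\cdot)$ is $\mathscr{C}^{1,\alpha}$ by construction, the quotient rule reduces the whole task to a uniform lower bound on $|\mu(\Omega,\cdot)|$.

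The next step is to establish the non-vanishing of $\mu(\Omega,\cdot)$ on $[a,b]$. By \eqref{Restr-mu}, this function is the quotient of $2(\tilde{f}_0'\circ\tilde{f}_0^{-1})(t)$, which by \eqref{H2}--\eqref{H1} is bounded between two strictly positive constants, and of $\Omega-\rho(\tilde{f}_0^{-1}(t))$, where $\rho(r):=\tfrac12\int_0^1\tilde{f}_0(sr)\,ds$ has image exactly $[\kappa_1,\kappa_2]$ on $r\in[0,1]$ by \eqref{kappa1}--\eqref{kappa2}. Hence for $\Omega\notin[\kappa_1-\delta,\kappa_2+\delta]$ we have $|\Omega-\rho(\tilde{f}_0^{-1}(t))|\geq\delta$, which combined with the boundedness from above of the same quantity gives a uniform lower bound $|\mu(\Omega,t)|\geq c(\Omega,\delta)>0$ on $[a,b]$.

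Finally, I would handle the extension past $[a,b]$: the continuation of $\mu(\Omega,\cdot)$ to $[a-\epsilon,b+\epsilon]$ can be taken to be any standard $\mathscr{C}^{1,\alpha}$ extension (e.g.\ a Whitney-type one depending linearly on the restriction), so in particular continuous with endpoint values $\mu(\Omega,a)$ and $\mu(\Omega,b)$, both of modulus at least $c(\Omega,\delta)$ by the previous step. Shrinking $\epsilon$ if necessary, uniform continuity of the extension preserves the lower bound $c(\Omega,\delta)/2$ throughout $[a-\epsilon,b+\epsilon]$. Therefore $1/\mu(\Omega,\cdot)\in\mathscr{C}^{1,\alpha}([a-\epsilon,b+\epsilon])$ and $\mathcal{M}(\Omega,\cdot)\in\mathscr{C}^{2,\alpha}([a-\epsilon,b+\epsilon])$. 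I do not expect a serious obstacle; the only subtle point is choosing the extension of $\mu$ so as not to create spurious zeros, which is precisely what the buffer $\delta$ in the hypothesis is designed to accommodate.
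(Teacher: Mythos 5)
Your argument is correct and follows essentially the same route as the paper: differentiate the defining integral \eqref{def-MM} to get $\partial_t\mathcal{M}(\Omega,t)=1/\mu(\Omega,t)$, insert \eqref{Restr-mu} on $[a,b]$, and upgrade to $\mathscr{C}^{2,\alpha}$ by noting that $\mu(\Omega,\cdot)\in\mathscr{C}^{1,\alpha}$ is bounded away from zero so that $1/\mu(\Omega,\cdot)\in\mathscr{C}^{1,\alpha}$. Your explicit lower bound on $|\mu|$ via $|\Omega-\rho(\tilde f_0^{-1}(t))|\geq\delta$ and the remark on choosing a zero-free extension merely spell out what the paper delegates to its preceding discussion of the construction of $\mu$.
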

\begin{proof}
Coming back to \eqref{def-MM} we see that $\mathcal{M}(\Omega,\cdot)$ is differentiable with
\begin{align*}
\partial_t\mathcal{M}(\Omega,t)=\frac{1}{{\mu}(\Omega,t)}, \forall t\in[a-\epsilon,b+\epsilon].
\end{align*}
To get the formula stated in the lemma, it is enough to use \eqref{Restr-mu}. As to the regularity,  
we have seen from the preceding discussion that $t\in[a-\epsilon,b+\epsilon]\mapsto {\mu}(\Omega,t)$ is of class $\mathscr{C}^{1,\alpha}$. Since this function is not vanishing then by the classical composition law we infer that  $t\in[a-\epsilon,b+\epsilon]\mapsto \frac{1}{{\mu}(\Omega,t)}$ is of class $\mathscr{C}^{1,\alpha}$. This achieves that $\mathcal{M}(\Omega,\cdot)$ is of class $\mathscr{C}^{2,\alpha}([a-\epsilon,b+\epsilon]).$
\end{proof}
\subsection{Function spaces}\label{Funct-space}
Here, we aim to fix the function spaces that will be used to perform the bifurcation arguments. First, for   $\alpha\in(0,1)$ we denote by $\mathscr{C}^{0,\alpha}({\D})$   the set of continuous functions such that
\begin{align*}
\|f\|_{\mathscr{C}^{0,\alpha}({\D})}:= \|f\|_{L^\infty({\D})}+\sup_{z_1\neq z_2\in{\D}}\frac{|f(z_1)-f(z_2)|}{|z_1-z_2|^\alpha}<\infty.
\end{align*}
 Second, for $k\in\N$ the space   $\mathscr{C}^{k,\alpha}({\D})$ stands for the set of   $\mathscr{C}^k$ functions whose partial $k$-th derivatives lie \mbox{in $\mathscr{C}^{0,\alpha}(\D)$.} Let us point out that $\mathscr{C}^{k,\alpha}(\overline{\D})$ coincides with   $\mathscr{C}^{k,\alpha}(\D)$, for $\alpha\in(0,1)$, in the sense that any element of this latter space admits a unique continuous extension up to the boundary which lies to $\mathscr{C}^{k,\alpha}(\overline{\D})$. Similarly, we define the H\"older spaces $\mathscr{C}^{k,\alpha}(\T)$ in the unit circle $\T$. Let us supplement these spaces with the additional symmetry structures. Consider $m\in\N$ and define 
\begin{equation}\label{space-1-m}
\mathscr{C}^{k,\alpha}_{s,m}(\D):=\bigg\{ g: {\D}\to \R\in \mathscr{C}^{k,\alpha}({\D}),\quad g(re^{i\theta})=\sum_{{n\in m\N}} g_n(r)\cos(n\theta),\, g_n\in\R,\,  \forall \,r\in[0,1], \theta\in\R \bigg\}
\end{equation}
and 
\begin{equation}\label{space-3-m}
 \mathscr{C}^{k,\alpha}_{a,m}(\T):=\bigg\{ \rho: \T\to \R\in \mathscr{C}^{k,\alpha}(\T),\quad \rho(e^{i\theta})=\sum_{n\in m\N^\star}\rho_n\sin(n\theta), \, \,\rho_n\in\R, \,  \forall \,\theta\in \R\bigg\}.
\end{equation}
These spaces  are equipped with the usual  norm $\|\cdot\|_{\mathscr{C}^{k,\alpha}}$. One can easily check that if the functions  $g\in \mathscr{C}^{k,\alpha}_{s,m}(\D)$ and $\rho\in \mathscr{C}^{k,\alpha}_{a,m}(\T)$, then they satisfy the following properties
\begin{equation}\label{Persis}
g(\overline{z})=g(z), \quad \rho(\overline{w})=-\rho(w), \quad \forall z\in {\D},\,\forall\,  w\in \T.
\end{equation}
Moreover, the parameter $m$ means the $m$-fold symmetry of the solution. We point out that the space $\mathscr{C}_{s,m}^{k,\alpha}(\D)$ will contain the perturbations of the initial radial density. The condition on $g$ means that this perturbation  is invariant by reflexion on the real axis. \\
The second kind of function spaces is $\mathscr{H}\mathscr{C}^{k,\alpha}_m(\D)$, which is the set  of holomorphic functions $\phi$ in $\D$  belonging to $ \mathscr{C}^{k,\alpha}({\D})$ and satisfying  
\begin{equation*}
  \phi(0)=0,\quad \phi'(0)=0\quad \hbox{and}\quad \overline{\phi(z)}=\phi(\overline{z}), \quad \forall z\in {\D}.
\end{equation*}
With these properties, the function $\phi$ admits the following Taylor expansion
\begin{align}\label{phi}
\phi(z)= z\sum_{{n\in m\N^\star}} a_n z^{n}, \quad a_n\in\R.
\end{align}
Next, we define
$$
\mathscr{H}\mathscr{C}^{k,\alpha}_m(\T):=\Big\{\phi\in \mathscr{C}^{k,\alpha}(\T),\quad\phi(w)=w\sum_{n\in m\N^\star}a_nw^{n},\, a_n\in\R,\, \forall w\in\T\Big\}.
$$
It is important to remark that the perturbed  domains that we shall construct for rotating solutions are parametrized by   the conformal map $\Phi=\textnormal{Id}+\phi$ taking  the form
$$
\Phi(z)=z+z\sum_{n\in m\N^\star}a_n z^{n}, \quad a_n\in\R,
$$
Thus by  imposing this structure we can remove the dilation, rotation  and translation invariances in order to reduce later the kernel size of the linearized operator. Finally, let us define the balls
\begin{align}\label{BAL}
 \left\{ \begin{array}{lll}B_{ \mathscr{C}_{s,m}^{k,\alpha}}(g_0,\varepsilon)=&\Big\{g\in   \mathscr{C}_{s,m}^{k,\alpha}({\D})\quad \textnormal{s.t.}\quad \|g-g_0\|_{k,\alpha}< \E  \Big\},\\
 B_{\mathscr{H} \mathscr{C}^{k,\alpha}_m}(\phi_0,\E)=&\Big\{\phi\in  \mathscr{H} \mathscr{C}^{k,\alpha}_m({\D})\quad \textnormal{s.t.}\quad \|\phi-\phi_0\|_{k,\alpha}< \E \Big\},
\end{array} \right.
\end{align}
for $\E>0$, {$k,m\in\N$, $\alpha\in(0,1)$}, $g_0\in  \mathscr{C}_{s,m}^{k,\alpha}({\D})$ and  $\phi_0\in  \mathscr{C}^{k,\alpha}({\D})$.
It is a classical fact that  if $\phi\in B_{\mathscr{H}\mathscr{C}^{k,\alpha}_m}(0,\varepsilon)$ with $\varepsilon\in(0,1)$ then $\Phi=\textnormal{Id}+\phi$ is a bi-Lipschitz function.

Moreover, from now on and by virtue of \eqref{H2}, we will assume through all the work that $\alpha$ is fixed for the function spaces and $\beta>\alpha$.

\subsection{Resolution of the boundary equation and consequence}
We have already seen in the foregoing sections that the equations \eqref{rotating-system-1}-\eqref{rotating-system-2} can be transformed into the form \eqref{Bound-eq1}-\eqref{density-2}. Thus to solve the new coupled system, we start first with solving  the boundary equation \eqref{Bound-eq1} by means of the Implicit Function Theorem, getting that the boundary perturbation  $\phi$ is a smooth functional on $(\Omega,g)$. Later, we will come back to the density equation \eqref{density-2} and insert such dependence of $\phi$  leading to a new equation  governing  only  the variables $(\Omega,g)$.\\
Now, we shall focus on the resolution of the boundary equation \eqref{Bound-eq1} where $\Omega$ and $g$ are viewed as parameters.
Following the same proof of \cite[Proposition 3.1]{GHS:2020}, we deduce that for a given profile  $f_0$ satisfying \eqref{H2},  for any $m\in\N^\star$ and  $\varepsilon\in(0,1)$, we  find that the functional 
$$
F:\R\times B_{\mathscr{C}_{s,{m}}^{1,\alpha}}(0,\varepsilon)\times B_{\mathscr{H}\mathscr{C}^{2,\alpha}_{{m}}}(0,\varepsilon)\rightarrow \mathscr{C}_{a,{m}}^{1,\alpha}(\T),
$$
is well-defined and of class $\mathscr{C}^1$. Moreover,  according to \cite[Proposition 3.3]{GHS:2020}, the partial Gateaux derivative $D_\phi F(\Omega,0,0)$ takes the form
\begin{align*}
D_\phi F(\Omega,0,0)[k](w)=\sum_{n\in {m\N^\star}}n\,a_n \left\{\Omega-\int_0^1 sf_0(s)ds+\frac{n+1}{n}\int_0^1 s^{2n+1}f_0(s)ds\right\}\sin(n\theta),
\end{align*}
with $\displaystyle{k(z)= \sum_{n\in m\N^\star}a_n z^{n+1}},$ and $a_n\in\R.$ Introduce the following   singular set 
\begin{equation}\label{Interv1}
\mathcal{S}^m_{\textnormal{sing}}:=\bigg\{\widehat{\Omega}_{n}:=\int_0^1sf_0(s)ds-\frac{n+1}{n}\int_0^1s^{2n+1}f_0(s)ds,\quad    \forall {n\in m\N^\star\cup\{\infty\}} \bigg\},
\end{equation}
which  corresponds to the location of  the points $\Omega$ where the partial  linearized operator $D_\phi F(\Omega,0,0)$ is not one-to-one.  Therefore, we obtain by virtue of  \cite[Proposition 3.3]{GHS:2020} that when $\Omega\notin \mathcal{S}^m_{\textnormal{sing}}$ one can parametrize locally  the zeros of  the nonlinear boundary equation. More precisely, we have the following result.

\begin{pro}\label{propImpl}
Let  $m\geqslant1,$ $f_0$ satisfy \eqref{H2} and let $I$ be an open interval such that  $\overline{I}\subset \R\backslash \mathcal{S}^m_{\textnormal{sing}}$.
Then, there exists $\varepsilon>0$ and  a $\mathscr{C}^1$ function  
$$\mathcal{N}: I\times B_{\mathscr{C}_{ s,{m}}^{1,\alpha}}(0,\varepsilon)\longrightarrow B_{\mathscr{H}\mathscr{C}^{2,\alpha}_{{m}}}(0,\varepsilon),
$$
 with the following property,
$$
 F(\Omega,g,\phi)=0\Longleftrightarrow \phi=\mathcal{N}(\Omega,g),
$$
 for any $ (\Omega,g,\phi)\in I\times B_{\mathscr{C}_{s,{m}}^{1,\alpha}}(0,\varepsilon)\times B_{\mathscr{H}\mathscr{C}^{2,\alpha}_{{m}}}(0,\varepsilon)$.
In addition,  we obtain the identity 
$$
D_g\mathcal{N}(\Omega, 0)[h](z)=z\sum_{n\in {m\N^\star}} A_n[h_n] z^{n},
$$
for any $h\in \mathscr{C}_{s,m}^{1,\alpha}({\D})$, with  $h(re^{i\theta})=\displaystyle{\sum_{n\in{m\N}}h_n(r)\cos(n\theta)}$ and 
\begin{equation}\label{An}
A_n[h_n]:={\frac{\displaystyle{\int_0^1s^{n+1}h_n(s)ds}}{2n\big(\widehat{\Omega}_n-\Omega\big)}},
\end{equation}
where $\widehat{\Omega}_n$ is defined in \eqref{Interv1}.  Moreover, we have
\begin{equation}\label{ContinQ1}
\|\mathcal{N}(\Omega, 0)[h]\|_{\mathscr{H}\mathscr{C}^{2,\alpha}_{{m}}(\D)}\leq  C\|h\|_{\mathscr{C}^{1,\alpha}_{{s,m}}(\D)}.
\end{equation}
\end{pro}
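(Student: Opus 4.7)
The plan is to apply the Implicit Function Theorem to the boundary functional $F$ at $(\Omega,0,0)$ for every $\Omega$ in the compact set $\overline{I}$. The $\mathscr{C}^1$ regularity of $F$ on the product of H\"older balls is already recorded in the excerpt (it follows along the lines of \cite[Proposition 3.1]{GHS:2020}) and the identity $F(\Omega,0,0)\equiv 0$ is trivial. The content of the proof is therefore to verify that the partial differential
\[
D_\phi F(\Omega,0,0):\mathscr{H}\mathscr{C}^{2,\alpha}_{m}(\D)\longrightarrow \mathscr{C}^{1,\alpha}_{a,m}(\T)
\]
is a topological isomorphism whose inverse norm is bounded uniformly in $\Omega\in\overline{I}$.

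For this, I would exploit the diagonal Fourier structure already computed above: with $k(z)=\sum_{n\in m\N^\star}a_n z^{n+1}$, the operator acts by $k\mapsto \sum_{n\in m\N^\star}n(\Omega-\widehat{\Omega}_n)\,a_n\sin(n\theta)$. Boundedness of $f_0$ together with dominated convergence gives $\widehat{\Omega}_n\to\widehat{\Omega}_\infty\in \mathcal{S}^m_{\textnormal{sing}}$, and the hypothesis $\overline{I}\cap \mathcal{S}^m_{\textnormal{sing}}=\emptyset$ then yields the uniform spectral gap
\[
\delta:=\inf_{\Omega\in\overline{I},\,n\in m\N^\star}|\Omega-\widehat{\Omega}_n|>0.
\]
The candidate inverse is the Fourier multiplier $\sum_n b_n\sin(n\theta)\mapsto z\sum_n \tfrac{b_n}{n(\Omega-\widehat{\Omega}_n)}z^n$, whose symbol decays like $1/n$. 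Since angular antidifferentiation maps $\mathscr{C}^{1,\alpha}(\T)$ continuously into $\mathscr{C}^{2,\alpha}(\T)$, and the Cauchy extension lifts a H\"older boundary datum to a holomorphic function in $\mathscr{C}^{2,\alpha}(\overline{\D})$ with equivalent norms, this inverse is continuous with norm controlled by $\delta^{-1}$, uniformly in $\Omega\in\overline{I}$.

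The Implicit Function Theorem then produces, for each $\Omega_0\in\overline{I}$, a local $\mathscr{C}^1$ solver $\phi=\mathcal{N}(\Omega,g)$ of $F=0$ on a neighborhood $U_{\Omega_0}\times B(0,\varepsilon_{\Omega_0})$, and a standard compactness argument on $\overline{I}$ yields a uniform radius $\varepsilon>0$. To derive the formula for $D_g\mathcal{N}(\Omega,0)$, I differentiate the identity $F(\Omega,g,\mathcal{N}(\Omega,g))\equiv 0$ at $g=0$ to get
\[
D_\phi F(\Omega,0,0)\bigl[D_g\mathcal{N}(\Omega,0)[h]\bigr]=-D_g F(\Omega,0,0)[h].
\]
Expanding the Cauchy kernel $(w-y)^{-1}=\sum_{k\geqslant 0}y^k w^{-k-1}$ for $|y|<|w|=1$ and using orthogonality of the cosine modes of $h$, one computes
\[
D_g F(\Omega,0,0)[h](e^{i\theta})=\tfrac{1}{2}\sum_{n\in m\N^\star}\sin(n\theta)\int_0^1 s^{n+1}h_n(s)\,ds,
\]
and inverting mode by mode recovers the coefficients $A_n[h_n]$ of \eqref{An}. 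The estimate \eqref{ContinQ1} then follows from the uniform $\delta^{-1}$ control on the inverse multiplier together with $\bigl|\int_0^1 s^{n+1}h_n(s)\,ds\bigr|\lesssim \|h\|_{\mathscr{C}^{1,\alpha}_{s,m}(\D)}$.

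The main technical point I expect is the quantitative continuity of the inverse multiplier between H\"older-holomorphic classes with uniform-in-$\Omega$ constants. Since its symbol differs from pure angular antidifferentiation only by the uniformly bounded and uniformly bounded-below factor $(\Omega-\widehat{\Omega}_n)^{-1}$, this reduces to standard Cauchy/Poisson integral estimates in $\overline{\D}$, but care is needed to track that all constants depend only on $\delta$, so that the radius $\varepsilon$ can be chosen uniformly on $I$.
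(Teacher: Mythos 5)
Your proposal is correct and follows essentially the same route as the paper, which itself proves this proposition by invoking the Implicit Function Theorem exactly as you do: the $\mathscr{C}^1$ regularity of $F$ and the diagonal form of $D_\phi F(\Omega,0,0)$ are taken from \cite[Propositions 3.1 and 3.3]{GHS:2020}, the inclusion of $n=\infty$ in $\mathcal{S}^m_{\textnormal{sing}}$ is precisely what gives your uniform gap $\delta>0$ on $\overline{I}$, and the formula for $D_g\mathcal{N}(\Omega,0)$ and the bound \eqref{ContinQ1} follow by implicit differentiation and mode-by-mode inversion as you compute. The only caveat is the one you already flag: uniform boundedness of $(\Omega-\widehat{\Omega}_n)^{-1}$ alone is not quite enough for a H\"older multiplier bound, and one should use that $\widehat{\Omega}_n\to\kappa_2$ at rate $O(1/n)$ (so the symbol is angular antidifferentiation times a constant plus a summable correction), which is the standard Cauchy-integral estimate you anticipate.
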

The next task is to  come back to the density equation \eqref{density-2} and define
\begin{equation}\label{densityEq}
\widehat{G}(\Omega,g):=G(\Omega,g,\mathcal{N}(\Omega,g)),
\end{equation}
where $\mathcal{N}$ is defined via Proposition \ref{propImpl}. Therefore, the main purpose is to find non trivial roots for the functional $\widehat{G}$, which turns out to be more subtle and requires bifurcation tools. In the following proposition we discuss the regularity of $\widehat{G}$.
\begin{pro}\label{Gwelldefined}\label{prop-G-welldef}
Let  $m\geqslant 1, f_0$ satisfy \eqref{H2}-\eqref{H1}, with $\beta>\alpha$,  and $I$ be an open interval with  $\overline{I}\subset\R\backslash( \mathcal{S}^m_{\textnormal{sing}}\cup[\kappa_1,\kappa_2])$. Then, there exists $\varepsilon>0$ such that
$$\widehat{G}:I\times B_{\mathscr{C}_{s,m}^{1,\alpha}(\D)}(0,\E)\to \mathscr{C}_{s,m}^{1,\alpha}(\D),
$$ 
is well--defined and of class $\mathscr{C}^1$. Recall that the singular set $\mathcal{S}^m_{\textnormal{sing}}$, $\kappa_1$ and $\kappa_2$ were defined in \eqref{Interv1}, \eqref{kappa1} and \eqref{kappa2}, respectively. 
\end{pro}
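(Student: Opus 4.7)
The plan is to verify the well-definedness and $\mathscr{C}^1$ regularity of $\widehat{G}$ by decomposing $G(\Omega,g,\phi)$ into its three structural pieces and treating each separately, then composing with $\mathcal{N}$ from Proposition \ref{propImpl}. First, for $(\Omega,g,\phi)$ in the product $I\times B_{\mathscr{C}^{1,\alpha}_{s,m}}(0,\varepsilon)\times B_{\mathscr{H}\mathscr{C}^{2,\alpha}_m}(0,\varepsilon)$ with $\varepsilon$ small enough, the function $f=f_0+g$ takes values in $[a-\epsilon,b+\epsilon]$ by the continuous embedding $\mathscr{C}^{1,\alpha}\hookrightarrow L^\infty$, so Lemma \ref{lem-regM} applies and $\mathcal{M}(\Omega,\cdot)\in \mathscr{C}^{2,\alpha}([a-\epsilon,b+\epsilon])$ provided $\overline{I}\cap[\kappa_1,\kappa_2]=\emptyset$. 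By the standard composition law for H\"older spaces, $z\mapsto \mathcal{M}(\Omega,f(z))$ lies in $\mathscr{C}^{1,\alpha}(\D)$, with the map $(\Omega,g)\mapsto \mathcal{M}(\Omega,f)$ of class $\mathscr{C}^1$ from $I\times B_{\mathscr{C}^{1,\alpha}_{s,m}}$ into $\mathscr{C}^{1,\alpha}(\D)$; the $m$-fold symmetry and evenness in $\theta$ are preserved because $f_0$ is radial and $g\in\mathscr{C}^{1,\alpha}_{s,m}$.

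Second, the logarithmic integral term
$$L(\Phi,f)(z):=\frac{1}{2\pi}\int_{\D}\log|\Phi(z)-\Phi(y)|\,f(y)|\Phi'(y)|^2\,dA(y)$$
is a bi-Lipschitz change-of-variables version of the standard Newtonian potential on $D=\Phi(\D)$. For $\phi\in B_{\mathscr{H}\mathscr{C}^{2,\alpha}_m}(0,\varepsilon)$ with $\varepsilon$ small, $\Phi$ is a bi-Lipschitz conformal diffeomorphism and $|\Phi'|^2,\,\Phi\in \mathscr{C}^{2,\alpha}$; classical potential-theoretic estimates (already employed in \cite{GHS:2020}) yield that $L(\Phi,f)\in\mathscr{C}^{2,\alpha}(\D)\subset\mathscr{C}^{1,\alpha}(\D)$, with the map $(g,\phi)\mapsto L(\textnormal{Id}+\phi,f_0+g)$ of class $\mathscr{C}^1$. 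The remaining piece $-\tfrac12\Omega|\Phi(z)|^2$ is trivially $\mathscr{C}^{2,\alpha}$ and $\mathscr{C}^1$-smooth in its arguments, and the constant $\lambda=\lambda(\Omega)$ is smooth in $\Omega$ by the explicit formula. Combining these, $G:I\times B_{\mathscr{C}^{1,\alpha}_{s,m}}\times B_{\mathscr{H}\mathscr{C}^{2,\alpha}_m}\to \mathscr{C}^{1,\alpha}(\D)$ is well-defined and of class $\mathscr{C}^1$.

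Third, I would verify that the image of $G$ inherits the symmetry of $\mathscr{C}^{1,\alpha}_{s,m}(\D)$. The radiality of $f_0$, the $m$-fold and reflexion structure encoded in $g,\,\phi$ via \eqref{space-1-m}--\eqref{phi}, and the rotational and conjugation invariance of the kernel $\log|\Phi(z)-\Phi(y)|\,|\Phi'(y)|^2$ together imply $G(\Omega,g,\phi)(e^{2\pi i/m}z)=G(\Omega,g,\phi)(z)$ and $G(\Omega,g,\phi)(\overline z)=G(\Omega,g,\phi)(z)$, so that $G$ actually lands in $\mathscr{C}^{1,\alpha}_{s,m}(\D)$. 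Applying Proposition \ref{propImpl} to obtain $\mathcal{N}\in\mathscr{C}^1(I\times B_{\mathscr{C}^{1,\alpha}_{s,m}}(0,\varepsilon);B_{\mathscr{H}\mathscr{C}^{2,\alpha}_m}(0,\varepsilon))$ and composing, the chain rule gives $\widehat{G}(\Omega,g):=G(\Omega,g,\mathcal{N}(\Omega,g))$ of class $\mathscr{C}^1$ into $\mathscr{C}^{1,\alpha}_{s,m}(\D)$.

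The main obstacle I expect is the potential-theoretic regularity of $L(\Phi,f)$: one must pass the $\mathscr{C}^{2,\alpha}$ regularity of the Newtonian potential of a $\mathscr{C}^{1,\alpha}$ density through the conformal change of variables, and then verify $\mathscr{C}^1$-smoothness of this nonlinear functional jointly in $(g,\phi)$. Differentiability in $\phi$ produces an integrand that is weakly singular (still log-type after one differentiation because $\Phi$ is injective), but careful use of the Calder\'on--Zygmund type estimates, together with smallness of $\varepsilon$ to keep $\Phi$ uniformly bi-Lipschitz, handles it. A secondary but routine point is to check that the parameter $\lambda$ in \eqref{density-2}, originally defined via the radial profile, is indeed independent of $r$ and smooth in $\Omega$, which follows from the defining relation \eqref{mutrivial} for $\mu$ and the explicit integration already carried out in \cite[Proposition B.5]{GHS:2020}.
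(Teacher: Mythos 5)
Your overall architecture is the same as the paper's: the logarithmic potential term, the $-\tfrac12\Omega|\Phi|^2$ term, the symmetry persistence and the composition with $\mathcal{N}$ are all identical to the quadratic-profile case and are handled by citing \cite{GHS:2020}, so the only genuinely new content of this proposition is the term $z\mapsto \mathcal{M}(\Omega,f_0(z)+g(z))$. This is exactly where your argument has a gap: you dispose of it with ``the standard composition law for H\"older spaces'' and assert that $(\Omega,g)\mapsto\mathcal{M}(\Omega,f_0+g)$ is of class $\mathscr{C}^1$ into $\mathscr{C}^{1,\alpha}$. But superposition (Nemytskii) operators between H\"older spaces of the same exponent are precisely the place where ``standard'' fails: boundedness of $u\mapsto \widehat{\mu}\circ u$ on $\mathscr{C}^{1,\alpha}$ does not give continuity, and what must be proved is the continuity of the G\^ateaux differential $g\mapsto \partial_g\mathcal{M}(\Omega,f_0+g)[h]=\widehat{\mu}(f_0+g)\,h$ in the operator norm, i.e.\ continuity of $g\mapsto \widehat{\mu}(f_0+g)$ in $\mathscr{C}^{1,\alpha}(\D)$, where $\widehat\mu=1/\mu(\Omega,\cdot)$. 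The paper's proof devotes essentially all its effort to this point: it uses that $\widehat{\mu}'\in\mathscr{C}^{\beta}$ with $\beta>\alpha$ (this is where the hypothesis $\beta>\alpha$, which you never invoke, enters), interpolates the $\mathscr{C}^\alpha$ norm of $\widehat{\mu}'(f_0+g_1)-\widehat{\mu}'(f_0+g_2)$ between $L^\infty$ and $\mathscr{C}^\beta$, and bounds the $L^\infty$ difference by $\|\widehat\mu'\|_{\mathscr{C}^\beta}\|g_1-g_2\|_{L^\infty}^\beta$, obtaining a H\"older-type modulus of continuity. Without this (or an equivalent argument) your claim that $\widehat G$ is $\mathscr{C}^1$ is unproved at its one nontrivial step.

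Two smaller remarks. Your assertion that the potential term $L(\Phi,f)$ lies in $\mathscr{C}^{2,\alpha}(\D)$ overstates the boundary regularity (the density $f\circ\Phi^{-1}\mathbf{1}_D$ jumps across $\partial D$); only $\mathscr{C}^{1,\alpha}$-type bounds are needed and available, as in \cite[Section 4.2]{GHS:2020}, so this does not affect the conclusion. Also, the exclusion $\overline I\subset\R\setminus\mathcal{S}^m_{\textnormal{sing}}$ is needed for Proposition \ref{propImpl} (definition of $\mathcal{N}$), while $\overline I\cap[\kappa_1,\kappa_2]=\emptyset$ is what makes $\mu(\Omega,\cdot)$ nonvanishing and $\mathcal{M}(\Omega,\cdot)$ well defined; you use both implicitly, and it would be worth saying so explicitly.
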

\begin{proof}
Notice that the only difference (in terms of regularity) in  the new density equation \eqref{densityEq} between the  restricted case of the quadratic profile explored in \cite[Section 4.2]{GHS:2020} and the generic monotonic  profile discussed here  is the first term of $\widehat{G}$, that is, 
$$
\mathcal{M}\big(\Omega, f(z)\big)=\mathcal{M}\big(\Omega, f_0(z)+g(z)\big).
$$
By virtue of Lemma \ref{lem-regM} we get under the assumptions  \eqref{H2}-\eqref{H1} that  $t\in [a-\epsilon,b+\epsilon]\mapsto \mathcal{M}(\Omega,t)$ is of class on $\mathscr{C}^{2,\alpha}$ and in particular of class $\mathscr{C}^2$. Therefore, by taking $g$ small enough in $\mathscr{C}_{s,m}^{1,\alpha}(\D)$  and using the composition rules  we deduce that get that $z\mapsto \mathcal{M}\big(\Omega, f_0(z)+g(z)\big)\in\mathscr{C}_m^{1,\alpha}(\D).$ In addition, the Frechet derivative with respect to $g$ takes in view of \eqref{def-MM} the form
$$
\partial_g\mathcal{M}\big(\Omega, f_0+g\big)[h](z)=\frac{h(z)}{{\mu}\big(\Omega, f_0(z)+g(z)\big)}, 
$$ 
with $g\in\mathscr{C}^{1,\alpha}_{s,m}(\D)$ small enough and $h\in\mathscr{C}^{1,\alpha}_{s,m}(\D)$. Notice that by the classical composition and product laws we infer that $\partial_g\mathcal{M}\big(\Omega, f_0+g\big)[h]\in\mathscr{C}^{1,\alpha}_{s,m}(\D)$. Let us now move the continuity of the differential $g\mapsto \partial_g\mathcal{M}\big(\Omega, f_0+g\big)$. From direct computations and  law products we  get for $g_1,g_2\in\mathscr{C}^{1,\alpha}_s(\D)$ and small enough
\begin{align*}
\big\|\partial_g\mathcal{M}\big(\Omega, f_0+g_1\big)[h]-\partial_g\mathcal{M}\big(\Omega, f_0+g_2\big)[h]\big\|_{\mathscr{C}^{1,\alpha}(\D)}\lesssim \|h\big\|_{\mathscr{C}^{1,\alpha}(\D)} \|\tfrac{1}{{\mu}(\Omega, f_0+g_1)} -\tfrac{1}{{\mu}(\Omega, f_0+g_2)} \|_{\mathscr{C}^{1,\alpha}(\D)}.
\end{align*} 
Denote $\widehat{\mu}(t)=\frac{1}{\mu(t)}$, then $\widehat\mu\in \mathscr{C}^{1,\beta}([a-\epsilon, b+\epsilon])$. Then straightforward computations yield
\begin{align*}
\| \partial_x\big({\widehat{\mu}( f_0+g_1)} -{\widehat{\mu}(f_0+ g_2)}\big) \|_{\mathscr{C}^{\alpha}(\D)} &\leqslant \| {\widehat{\mu}^\prime( f_0+g_1)} -{\widehat{\mu}^\prime(f_0+ g_2)}\big) \|_{\mathscr{C}^{\alpha}(\D)}\|f_0+g_1\|_{\mathscr{C}^{1,\alpha}(\D)}\\
&+\| {\widehat{\mu}^\prime( f_0+g_2)} \|_{\mathscr{C}^{\alpha}(\D)}\|g_1-g_2\|_{\mathscr{C}^{1,\alpha}(\D)}\\
&\lesssim \| {\widehat{\mu}^\prime( f_0+g_1)} -{\widehat{\mu}^\prime(f_0+ g_2)}\big) \|_{\mathscr{C}^{\alpha}(\D)}+\|g_1-g_2\|_{\mathscr{C}^{1,\alpha}(\D)}.
\end{align*} 
By an interpolation inequality we deduce that
\begin{align*}
\| {\widehat{\mu}^\prime( f_0+g_1)} -{\widehat{\mu}^\prime(f_0+ g_2)} \|_{\mathscr{C}^{\alpha}(\D)}&\lesssim \| {\widehat{\mu}^\prime( f_0+g_1)} -{\widehat{\mu}^\prime(f_0+ g_2)} \|_{L^\infty(\D)}^{\frac{\beta-\alpha}{\beta}}\\
&\times\left(  \| {\widehat{\mu}^\prime( f_0+g_1)}\|_{\mathscr{C}^{\beta}(\D)}^{\frac{\alpha}{\beta}}+ \|{\widehat{\mu}^\prime(f_0+ g_2)} \|_{\mathscr{C}^{\beta}(\D)}^{\frac{\alpha}{\beta}} \right)\\
&\lesssim \| {\widehat{\mu}^\prime( f_0+g_1)} -{\widehat{\mu}^\prime(f_0+ g_2)} \|_{L^\infty(\D)}^{\frac{\beta-\alpha}{\beta}}.
\end{align*} 
Using the definition of the H\"older norm implies 
\begin{align*}
\|{\widehat{\mu}^\prime( f_0+g_1)} -{\widehat{\mu}^\prime(f_0+ g_2)} \|_{L^\infty(\D)}&\lesssim  \| {\widehat{\mu}^\prime}\|_{\mathscr{C}^{\beta}(\D)}\|g_1-g_2\|_{L^\infty(\D)}^\beta\\
&\lesssim  \| {\widehat{\mu}^\prime}\|_{\mathscr{C}^{\beta}(\D)}\|g_1-g_2\|_{\mathscr{C}^{1,\alpha}(\D)}^\beta.
\end{align*} 
Hence  the map $g\in\mathscr{C}^{1,\alpha}_s(\D)\mapsto \partial_g\mathcal{M}\big(\Omega, f_0+g\big)$ is continuous. It follows that $g\in B_{\mathscr{C}_{s,m}^{1,\alpha}(\D)}(0,\E)\mapsto \mathcal{M}\big(\Omega, f_0+g\big)\in \mathscr{C}_{s,m}^{1,\alpha}(\D)$ is of class $C^1.$ This achieves the proof of the desired result.
\end{proof}
Thanks to the choice of $\mu$ as \eqref{mutrivial}, we achieve that
$$
\widehat{G}(\Omega,0)=0, \quad \forall\Omega\notin[\kappa_1,\kappa_2].
$$
Hence, the proof of Theorem \ref{th-intro} reduces to finding nontrivial roots of the nonlinear functional $\widehat{G}$, and the approach that we will follow is based on  the classical Crandall-Rabinowitz theorem in bifurcation theory, whose statement can be found in \cite{rabinowitz:simple,kielhofer}. In order to use such theorem, one needs to study some spectral properties of $D_g \widehat{G}(\Omega_0,0)$ for some suitable values  $\Omega_0$. In particular, we should show that $D_g \widehat{G}(\Omega_0,0)$ is a Fredholm operator with zero index and has one-dimensional kernel, together with the so called   transversal condition. In what follows, the main task is to check  the spectral properties of the linear operator  $D_g \widehat{G}(\Omega_0,0)$ in order to apply  later  in \mbox{Section \ref{Sec-main-result}} the Crandall-Rabinowitz theorem.

\section{General  structure of the linearized operator}\label{Sec-lin-op}
In this section, we plan to  compute the G\^ateaux derivative at the trivial solution $(\Omega,0)$ of the functional $\widehat{G}$ defined through \eqref{densityEq}. We will show that this operator is Fredholm of zero index since it can be described by a  compact perturbation of an isomorphism. Later, we  will give the expression of the linear operator $D_g \widehat{G}(\Omega,0)$ using  Fourier series expansion. We show in particular that this operator  can  be described through a countable family of one-dimensional operators with variable coefficients. This will be helpful later  in the kernel study that will be developed in Section \ref{Sec-kernel} and turns out to be tricky and very involved. 

\subsection{Fourier expansion }\label{Fourier-expans1}
The G\^ateaux derivative at the trivial solution $(\Omega,0)$ of the functional $\widehat{G}$ can be done in a straightforward way following the same computations as in \cite{GHS:2020}.  Then according to \eqref{def-MM} together with  the computations performed in   \cite[Section 5.1]{GHS:2020} we  find
\begin{align}\label{FormDif}
\nonumber D_g \widehat{G}(\Omega,0)[h](z)=&\frac{h(z)}{\mu(\Omega,f_0(r))}+\frac{1}{2\pi}\int_{\D} \log |z-y|h(y)dA(y)-\Omega\textnormal{Re}[\overline{z}k(z)]\\
\nonumber&+\frac{1}{2\pi}\int_{\D}\textnormal{Re}\left[\frac{k(z)-k(y)}{z-y}\right]f_0(y)dA(y)\\
&+\frac{1}{\pi}\int_{\D} \log|z-y|f_0(y)\textnormal{Re}[k'(y)]dA(y),
\end{align}
where  $\mu$ is given by the compatibility condition \eqref{mutrivial} and $k(z)=D_g\mathcal{N}(\Omega,0)[h]$ is detailed  in Proposition \ref{propImpl}. From now on we will work with the linear operator and we will denote 
\begin{align}\label{mu0-L}
\mu_\Omega^0(r):=\mu(\Omega,f_0(r)).
\end{align}
Next, we shall  give a more explicit form of the linearized operator using Fourier expansion.  Given a function $h\in\mathscr{C}^{k,\alpha}_s(\D)$ for some  $s\in\R$
$$r e^{i\theta}\in\D\mapsto h(re^{i\theta})=\sum_{n\in\N}h_n(r)\cos(n\theta)
,$$
then using \eqref{FormDif} together with  \cite[Proposition B.5]{GHS:2020}  we get that
\begin{align}\label{lin-op}
D_g \widehat{G}(\Omega,0)[h](re^{i\theta})=&\mathbb{L}_0^\Omega [h_0](r)+\sum_{n\geqslant 1} \cos(n\theta)\mathbb{L}_n^\Omega [h_n](r).
\end{align}
where
\begin{align*}
\quad \forall r\in(0,1),\quad\forall\, n\geqslant 1,\quad \mathbb{L}_n^\Omega [h](r):=&\frac{h(r)}{\mu_\Omega^0(r)}-\frac{r}{n}\left(G_n(r)A_n[h]+\frac{1}{2r^{n+1}}H[h](r)\right)\\
\mathbb{L}_0^\Omega [h](r):=&\frac{h(r)}{\mu_\Omega^0(r)}-\int_r^1\frac{1}{\tau}\int_0^\tau sh(s)dsd\tau,\nonumber
\end{align*}
with
\begin{align}\label{Hn-def}
 H_n[h](r)&:= r^{2n}\int_r^1s^{1-n}h(s)ds+\int_0^rs^{n+1}h(s)ds,
 \end{align}
 and
 \begin{align}\label{Gn-def}
 G_n(r):=& n\Omega r^{n+1}+r^{n-1}\int_0^1sf_0(s)ds-(n+1)r^{n-1}\int_0^rsf_0(s)ds\\
\nonumber  &
+\frac{n+1}{r^{n+1}}\int_0^rs^{2n+1}f_0(s)ds.
\end{align}
The value of $A_n[h]$  is given by \eqref{An} and agrees with
\begin{equation*}
A_n[h]={\frac{\displaystyle{\int_0^1s^{n+1}h(s)ds}}{2n\big(\widehat{\Omega}_n-\Omega\big)}}.
\end{equation*}
Moreover, there is another useful expression for $A_n[h]$ coming from the value of $G_n(1)$
\begin{align}\label{Gn1-1}
G_n(1)=n\left[\Omega-\int_0^1sf_0(s)ds+\frac{n+1}{n}\int_0^1s^{2n+1}f_0(s)ds\right]
=n\left(\Omega-\widehat{\Omega}_n\right).
\end{align}
Then we find
\begin{equation}\label{An-HHH}
A_n[h]=-\frac{H_n[h](1)}{2G_n(1)}, \quad \forall n\geqslant 1.
\end{equation}

\subsection{Fredholm structure}

Here, we intend to explore the Fredholm structure of the linearized operator described by  \eqref{FormDif}. 
\begin{pro}\label{compactop} 
Let $m\geqslant1, f_0$ satisfy \eqref{H2}-\eqref{H1} and $\alpha\in(0,1)$.
Then, for $\Omega\notin[\kappa_1,\kappa_2]\cup\mathcal{S}Pm_{\textnormal{sing}}$,  the linearized operator  
$ D_g \widehat{G}(\Omega,0):\mathscr{C}^{1,\alpha}_{s,m}(\D)\to \mathscr{C}^{1,\alpha}_{s,m}(\D)$ is a Fredholm operator with zero index.  
\end{pro}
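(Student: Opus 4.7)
The plan is to decompose $D_g\widehat{G}(\Omega,0)$ as the sum of a bounded isomorphism $\mathcal{I}$ and a compact operator $\mathcal{K}$, and then invoke the classical fact that any compact perturbation of an isomorphism is Fredholm of index zero. Inspecting \eqref{FormDif}, the natural choice is
\begin{equation*}
\mathcal{I}[h](z):=\frac{h(z)}{\mu_\Omega^0(r)},\qquad \mathcal{K}[h]:=D_g\widehat{G}(\Omega,0)[h]-\mathcal{I}[h],
\end{equation*}
so $\mathcal{K}[h]$ collects the four remaining terms: the logarithmic convolution with $h$, the polynomial contribution $-\Omega\,\textnormal{Re}[\bar z k(z)]$, and the two integrals containing $k$ or $k'$, where $k=D_g\mathcal{N}(\Omega,0)[h]$ is produced by Proposition \ref{propImpl} thanks to $\Omega\notin\mathcal{S}^m_{\textnormal{sing}}$.

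First I would check that $\mathcal{I}$ is an isomorphism of $\mathscr{C}^{1,\alpha}_{s,m}(\D)$. From \eqref{mu-r2} and \eqref{H2}, $r\mapsto\mu_\Omega^0(r)$ can be written as a function of $r^2\in[0,1]$ of class $\mathscr{C}^{1,\alpha}$; the hypothesis $\Omega\notin[\kappa_1,\kappa_2]$ together with the strict monotonicity \eqref{H1} forces $\mu_\Omega^0$ to keep a constant sign and to be bounded away from zero, so $1/\mu_\Omega^0$ is itself radial and of class $\mathscr{C}^{1,\alpha}$. Multiplication by such a radial, smooth, nonvanishing factor preserves the angular Fourier structure, in particular both the $m$-fold symmetry and the reflection symmetry \eqref{Persis}, hence $\mathcal{I}$ is a bounded invertible operator on $\mathscr{C}^{1,\alpha}_{s,m}(\D)$.

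Next, I would establish the compactness of each piece of $\mathcal{K}$. The Newtonian potential $h\mapsto\int_\D\log|z-y|h(y)\,dA(y)$ gains two derivatives in H\"older scale, mapping $\mathscr{C}^{0,\alpha}(\D)$ continuously into $\mathscr{C}^{2,\alpha}(\D)$; composing with the compact embedding $\mathscr{C}^{2,\alpha}\hookrightarrow\mathscr{C}^{1,\alpha}$ yields compactness into the codomain. By \eqref{ContinQ1} the map $h\mapsto k$ is bounded from $\mathscr{C}^{1,\alpha}_{s,m}(\D)$ into $\mathscr{H}\mathscr{C}^{2,\alpha}_m(\D)$; therefore the polynomial term $-\Omega\,\textnormal{Re}[\bar z k(z)]$ already factors through a bounded operator into $\mathscr{C}^{2,\alpha}(\D)$ and is compact. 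The same gain of derivative on $k$, combined with the $\mathscr{C}^{2,\beta}$-regularity of $f_0$ furnished by \eqref{H2} and the smoothing of the logarithmic and Cauchy-type kernels, shows that the two remaining integrals $\int_\D\textnormal{Re}[(k(z)-k(y))/(z-y)]f_0(y)\,dA(y)$ and $\int_\D\log|z-y|f_0(y)\textnormal{Re}[k'(y)]\,dA(y)$ also produce elements of $\mathscr{C}^{2,\alpha}(\D)$ continuously in $h$, hence compactly into $\mathscr{C}^{1,\alpha}(\D)$. The output lies in the symmetric $m$-fold subspace because each Fourier mode is handled diagonally by the operators $\mathbb{L}_n^\Omega$ of \eqref{lin-op}, which preserve $n\in m\N$ thanks to the structure of $k$ given by Proposition \ref{propImpl}.

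Putting these together, $\mathcal{I}^{-1}\circ D_g\widehat{G}(\Omega,0)=\textnormal{Id}+\mathcal{I}^{-1}\mathcal{K}$ is a compact perturbation of the identity on $\mathscr{C}^{1,\alpha}_{s,m}(\D)$, so it is Fredholm of index zero by Riesz--Schauder; composition with the isomorphism $\mathcal{I}$ preserves both properties and gives the result. The main technical point I expect is certifying the $\mathscr{C}^{2,\alpha}$-regularity of the two $k$-dependent integral operators: the logarithmic kernel on its own is borderline, and the necessary derivative is harvested by exploiting the holomorphy of $k$ together with the vanishing $k(0)=k'(0)=0$ built into $\mathscr{H}\mathscr{C}^{2,\alpha}_m$, plus the extra smoothness $\beta>\alpha$ of $f_0$ guaranteed by \eqref{H2}.
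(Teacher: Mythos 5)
Your proposal is correct and follows essentially the same route as the paper: split \eqref{FormDif} into the multiplication operator $h\mapsto h/\mu_\Omega^0$, which is an isomorphism of $\mathscr{C}^{1,\alpha}_{s,m}(\D)$ because $\mu_\Omega^0$ is radial, of class $\mathscr{C}^{1,\alpha}$ and bounded away from zero for $\Omega\notin[\kappa_1,\kappa_2]$, plus a compact remainder. The only difference is presentational: the paper delegates the compactness of the logarithmic and $k$-dependent terms to \cite[Proposition 5.1]{GHS:2020}, whereas you sketch those smoothing estimates directly, which is consistent with what the cited reference proves.
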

\begin{proof}
We follow  exactly the same ideas   of \cite[Proposition 5.1]{GHS:2020}. 
We emphasize that compared to this reference, the only difference concerns the first term of the right hand side in \eqref{FormDif}. Thus to get the desired result it is enough  to check that
$$
\frac{1}{\mu_\Omega^0}\textnormal{Id}:\mathscr{C}^{1,\alpha}_{s,m}(\D)\to \mathscr{C}^{1,\alpha}_{s,m}(\D),
$$
is an isomorphism, where $\mu_\Omega^0$ is defined through \eqref{mu0-L} and \eqref{mutrivial}. This can be easily ensured first by the fact that $\frac{1}{\mu_\Omega^0}$ is non vanishing in $[0,1]$ for any   $\Omega\notin[\kappa_1,\kappa_2]$ and it belongs  together with its inverse to the  space $\mathscr{C}^{1,\alpha}_{s,m}(\D)$.  Second, we use the products law in H\"{o}lder spaces $\mathscr{C}^{1,\alpha}_{s,m}(\D)$.
\end{proof}
\section{Kernel study}\label{Sec-kernel}
The main concern  of this section is to find  the values   $\Omega$, sometimes called {\it eigenvalues}, such that the kernel of $D_g \widehat{G}(\Omega,0)$ is one dimensional, which is  required for Crandall-Rabinowitz theorem.  As we shall explore, the kernel structure is related in view of the Fourier expansion stated in \eqref{lin-op} to the study of a countable family   of one-dimensional    Sturm-Liouville problems $\mathbb{L}_m^{\Omega}$ indexed with the  parameters $m$ and $\Omega$. Then, we reduce in this way the study to exploring a nice integral dispersion equation depending on the structure of the stationary profile $f_0$. Notice that surprisingly we are able to transform  the dispersion equation into a tractable form which  is quite similar  to the special quadratic case analyzed before in \cite[Section 6]{GHS:2020}.  Its study is  quite involved and  reveals  two main regimes where we can achieve a complete study depending on the sign of the profile $f_0$. Actually, we show that  for $f_0>0$ we have a scarcity of the eigenvalues living in the region $(-\infty,\kappa_1)$ and associated to lower symmetry  $m\leqslant\frac{\kappa_1}{\kappa_2-\kappa_1},$ as stated in \mbox{Proposition \ref{prop-omegan-low}-$(4)$.} However, the case  $f_0<0$ is more rich and  corresponds to the abundance regime where we find an infinite countable family of eigenvalues  associated to large symmetry $m\geqslant m_0$. These eigenvalues live in the region $(\kappa_2,\infty)$ and decrease to $\kappa_2$. For more details, see Proposition \ref{prop-omegam-asymp}. It is worthy to point out that we retrieve here in this general study   the main  feature  with similar regimes of the particular case of quadratic shapes analyzed in \cite{GHS:2020}.
\subsection{Kernel description}
In what follows, we shall write down the constraints on  the kernel elements of the linearized operator $D_g \widehat{G}(\Omega,0)$ and try along the next sections to encode them  analytically through what is called a {\it dispersion equation}. Coming back to  \eqref{lin-op} we deduce that the kernel is described by
\begin{align}\label{ker-op}
\mbox{Ker} D_g \widehat{G}(\Omega,0)=\left\{h=\sum_{{n\in m\N}}h_n(r)\cos(n\theta)\in \mathscr{C}^{1,\alpha}_{s,m}(\D),\quad\exists n\in m\N\quad \hbox{s.t.}\quad \mathbb{L}_n [h_n]=0\right\}.
\end{align}
Therefore the kernel study reduces to the analysis of the kernel of the stratified one dimensional operator $\mathbb{L}_n$ whose coefficients are not constant but depend on the variable $r$ through the profile $f_0.$  This makes the study more tricky compared to   the vortex patch problem where the involved  operator is described by  a Fourier multiplier in the angular variable. \\
 By the definition of $\mathbb{L}_n^\Omega$ stated after \eqref{lin-op}, we find that
$$
\mathbb{L}^\Omega_n [h_n]=0
$$
is equivalent to 
\begin{equation}\label{kernel-eq}
h_n(r)-\tfrac{\mu_\Omega^0(r)}{2nr^{n}}H_n[h_n](r)=-\tfrac{H_n[h_n](1)}{2nG_n(1)}\mu_\Omega^0(r)r G_n(r),
\end{equation} 
for any {$n\in m\N^\star,$} and for $n=0$ to
\begin{align}\label{kernel-eqL0}
\mathbb{L}_0^\Omega [h](r)=\frac{h(r)}{\mu_\Omega^0(r)}-\int_r^1\frac{1}{\tau}\int_0^\tau sh(s)dsd\tau =0.
\end{align}
Let us recall that the function $\mu_\Omega^0$,  defined according to  \eqref{mu0-L} and \eqref{mutrivial}, keeps a constant sign   depending on the location of $\Omega\notin [\kappa_1,\kappa_2]$. 
For this reason, we shall devise from it a new positive function $\nu_\Omega$ needed  to construct a Hilbert space with positive measure. Set,
\begin{align}\label{nu-mu}
\nu_\Omega(r):=\sigma _\Omega\,\mu_\Omega^0(r)=|\mu_\Omega^0(r)|,\quad\hbox{with}\quad 
 \sigma_\Omega:=\begin{cases}
-1,\quad \Omega\in(-\infty,\kappa_1),\\
1,\quad \Omega\in(\kappa_2,+\infty).
\end{cases}
\end{align}
Then, one has by virtue of \eqref{H2}-\eqref{H1} that  $\nu_\Omega(r)> 0$, for any $r\in[0,1]$  and $\Omega\notin[\kappa_1,\kappa_2].$
Consider  the positive Borel measure
\begin{align}\label{Eq-lambda}
d\lambda_\Omega(s):=\frac{s}{\nu_\Omega(s)}ds,
\end{align}
and define the Hilbert space $L^2(\lambda_{\Omega})$   as the set of measurable functions $f:[0,1]\rightarrow \R$ such that
\begin{align}\label{Norm-L2}
\|f\|^2_{\Omega}=\int_0^1 |f(s)|^2 d\lambda_\Omega(s)<\infty,
\end{align}
equipped with the standard  inner product:
\begin{align}\label{scalar-prod}
\langle f,g\rangle_\Omega=\int_0^1 f(r)g(r)d\lambda_\Omega(r).
\end{align}
For the sake of simple notation, we  denote $L^2_{\Omega}$ instead of  $L^2(\lambda_{\Omega})$.\\
In the following result we state a lower and upper bound estimate for  the function $\nu_\Omega$ that  will be very useful later.
\begin{lem}\label{prop-InvertT1}
Let $f_0$ satisfy \eqref{H2}-\eqref{H1}.  There exists $C_0>0$ depending only on $f_0$ such that for any  $\Omega\in(\kappa_2,\infty)$ and for any $ \theta\in[0,1]$. 
\begin{align*} 
\forall \, r\in[0,1],\quad \frac{\frac{f_0^\prime(r)}{r}}{\Omega-\kappa_1}\leqslant {\nu}_\Omega(r)
&\leqslant \frac{C_0 }{(\Omega-\kappa_2)^\theta}\frac{\frac{f_0^\prime(r)}{r}}{(1-r)^{1-\theta}}\cdot
\end{align*}

\end{lem}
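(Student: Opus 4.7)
\medskip

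The plan is to start from the explicit expression of $\nu_\Omega$. Since $\Omega > \kappa_2$, we have $\sigma_\Omega = 1$, so combining \eqref{nu-mu}, \eqref{mu0-L} and \eqref{mutrivial} gives
\begin{equation*}
\nu_\Omega(r) = \frac{f_0'(r)/r}{\Omega - I(r)}, \qquad I(r) := \int_0^1 s f_0(sr)\, ds,
\end{equation*}
so the lemma reduces to two-sided bounds on the denominator $\Omega - I(r)$. The lower bound on $\nu_\Omega$ is immediate: by the very definition \eqref{kappa1} of $\kappa_1$ we have $I(r) \geq \kappa_1$, whence $\Omega - I(r) \leq \Omega - \kappa_1$, giving the left-hand inequality at once.

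For the upper bound on $\nu_\Omega$ I would decompose
\begin{equation*}
\Omega - I(r) = (\Omega - \kappa_2) + \bigl(\kappa_2 - I(r)\bigr) =: A + B(r),
\end{equation*}
with $A>0$ since $\Omega > \kappa_2$, and $B(r) \geq 0$ thanks to the definition \eqref{kappa2} of $\kappa_2$ (note $\kappa_2 = I(1)$). The weighted AM--GM inequality then gives, for every $\theta \in [0,1]$,
\begin{equation*}
A + B(r) \geq A^{\theta}\, B(r)^{1-\theta} = (\Omega-\kappa_2)^{\theta}\, B(r)^{1-\theta},
\end{equation*}
so it suffices to produce a linear lower bound $B(r) \gtrsim 1-r$ independent of $\Omega$ and $\theta$.

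The main (and really the only nontrivial) step is this linear lower bound on $B(r)$. Writing
\begin{equation*}
B(r) = \int_0^1 s\, \bigl(f_0(s)-f_0(sr)\bigr)\, ds = \int_0^1 s \int_{sr}^{s} f_0'(t)\, dt\, ds,
\end{equation*}
I would invoke the monotonicity assumption \eqref{H1}: there exists $c_0 > 0$ with $f_0'(t) \geq c_0 t$ on $[0,1]$. Then
\begin{equation*}
\int_{sr}^{s} f_0'(t)\, dt \geq c_0 \int_{sr}^{s} t\, dt = \tfrac{c_0}{2} s^2 (1-r^2),
\end{equation*}
and integrating in $s$ yields $B(r) \geq \tfrac{c_0}{8}(1-r^2) \geq \tfrac{c_0}{8}(1-r)$. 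Plugging this into the AM--GM inequality gives
\begin{equation*}
\Omega - I(r) \geq (\Omega-\kappa_2)^{\theta}\, \bigl(\tfrac{c_0}{8}\bigr)^{1-\theta} (1-r)^{1-\theta},
\end{equation*}
and since $\theta \in [0,1]$ the prefactor $(c_0/8)^{1-\theta}$ is bounded below by a constant $C_0^{-1}$ depending only on $c_0$ (hence on $f_0$). Dividing $f_0'(r)/r$ by this lower bound produces the announced right-hand inequality, uniformly in $\theta \in [0,1]$ and $\Omega \in (\kappa_2, \infty)$. The only delicate point is indeed the linear vanishing of $B(r)$ at $r=1$, which is what forces the use of \eqref{H1} rather than mere monotonicity of $f_0$.
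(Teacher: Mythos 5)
Your proof is correct and follows essentially the same route as the paper: the lower bound is immediate from the definition of $\kappa_1$, and the upper bound comes from the linear lower bound $\kappa_2-\int_0^1 s f_0(rs)\,ds\gtrsim 1-r$ interpolated against $\Omega-\kappa_2$ (your AM--GM step is the same interpolation the paper performs on its two reciprocal bounds). The only difference is minor: you get the linear bound directly from the quantitative assumption $f_0'(t)\geqslant c_0 t$ in \eqref{H1}, whereas the paper splits $[0,1]$ at $r=\tfrac12$ and uses Taylor's formula together with a continuity/positivity argument near $r=1$.
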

\begin{proof}
The lower bound is trivial using the definition of $\kappa_1$ introduced in \eqref{kappa1} and the fact that $\Omega>\kappa_2$. For the upper one, recall from \eqref{nu-mu} and \eqref{mutrivial} that  for $\Omega>\kappa_2$,
\begin{align*}
{\nu}_\Omega(r)=\frac{\frac{f_0'(r)}{r}}{\Omega-\int_0^1sf_0(rs)ds},\quad \forall\,  r\in  (0,1].
\end{align*}
On the one hand, since  $f_0$ is increasing  we get in view of \eqref{kappa2} that
\begin{align}\label{nu11}
\frac{1}{\Omega-\int_0^1sf_0(rs)ds}&\leqslant\frac{1}{\Omega-\int_0^1sf_0(s)ds}\leqslant \frac{1}{\Omega-\kappa_2}\cdot
\end{align}
On the other hand, it is obvious that
\begin{align*}
\frac{1}{\Omega-\int_0^1sf_0(rs)ds}&\leqslant\frac{1}{\kappa_2-\int_0^1sf_0(rs)ds}\cdot
\end{align*}
Moreover, 
$$
\kappa_2-\int_0^1sf_0(rs)ds=\int_0^1s\big[f_0(s)-f_0(rs)\big]ds.
$$
Hence,   by using the monotonicity of $f_0$ we achieve  for $r\in[0,\frac12]$ 
$$
\kappa_2-\int_0^1sf_0(rs)ds\geqslant \int_0^1s\big[f_0(s)-f_0(\tfrac12s)\big]ds:=C_1.
$$
As to  the case  $r\in[\frac12,1)$, we can use Taylor formula leading to
\begin{align*}
\kappa_2-\int_0^1sf_0(rs)ds&=\int_0^1s\big[f_0(s)-f_0(rs)\big]ds\\
&=(1-r)\int_0^1\int_0^1s^2f_0^\prime\big(\tau s+(1-\tau)rs\big)d\tau ds.
\end{align*}
Then by the continuity of the double integral term and the positivity of $f_0^\prime$ we find $\overline{r}\in[\frac12,1]$ such that for any $ r\in[\tfrac12,1],$
$$ \int_0^1\int_0^1s^2f_0^\prime\big(\tau s+(1-\tau)rs\big)d\tau ds\geqslant \int_0^1\int_0^1s^2f_0^\prime\big(\tau s+(1-\tau)\overline{r}s\big)d\tau ds:=C_2>0.
$$
It  follows that
\begin{align*}
\kappa_2-\int_0^1sf_0(rs)ds
&\geqslant C_2(1-r).
\end{align*}
Consequently, a constant $C_3>0$ exists such that for any $r\in[0,1]$
\begin{align*}
\kappa_2-\int_0^1sf_0(rs)ds\geqslant C_3(1-r).
\end{align*}
Interpolating this inequality with \eqref{nu11} we get for any $\theta\in[0,1]$  
\begin{align}\label{nu102}
 \frac{1}{\Omega-\int_0^1 s f_0(rs)ds}
&\leqslant \frac{C_0}{(\Omega-\kappa_2)^\theta}\frac{1}{(1-r)^{1-\theta}},
\end{align}
ensuring the result of the lemma.
\end{proof}

\subsection{Kernel of $\mathbb{L}_0^\Omega$}
In this part, we intend to solve the equation \eqref{kernel-eqL0} and  it appears that the difficulty  varies according  to the sign of $\mu_\Omega^0$  or equivalently to the location of $\Omega$ with respect to the interval $[\kappa_1,\kappa_2]$. The case $\Omega\in(-\infty,\kappa_1)$ corresponding to the positivity of $\mu_\Omega^0$ is slightly  easy to tackle in view of the positivity of the operator  $\mathbb{L}_0^\Omega$ implying in turn that its  kernel is trivial. However the case $\Omega\in(\kappa_2,+\infty)$   turns out  to be more complicated and  the kernel is shown to be trivial only when $\Omega$ belongs to a finite set $\mathbb{S}.$ The proof of this last point is based on a refined analysis on a Sturm-Liouville equation governing  the kernel equation, and  a key ingredient is a non oscillation theorem discovered by Kneser  \cite{Kneser} together with Pr\"{u}fer transformation.\\
Coming back to \eqref{kernel-eqL0}  and set 
\begin{align}\label{kernel-eqL00}
\widehat{\mathbb{L}}_0^\Omega [h](r)&:=\mu_\Omega^0(r)\mathbb{L}_0^\Omega [h](r)\\
\nonumber &={h(r)}-
\sigma_\Omega {\nu_\Omega(r)}\int_r^1\frac{1}{\tau}\int_0^\tau sh(s)dsd\tau \\
\nonumber &:={h(r)}-
\sigma_\Omega\mathcal{L}^\Omega_0[h](r).
\end{align}
Since  $\mu_\Omega^0$ is not vanishing in $[0,1]$ then the operators $\widehat{\mathbb{L}}_0^\Omega$ and $\mathbb{L}_0^\Omega$ admit the same kernel.
The main result of this section reads as follows.
\begin{pro}\label{radialfunctions}
Let $f_0$ satisfy \eqref{H2}-\eqref{H1}. Then the following results hold true.
\begin{enumerate}
\item For any $\Omega\notin [\kappa_1,\kappa_2]$, the operator $\mathcal{L}^\Omega_0:L^2_{\Omega}\to L^2_{\Omega}$ is a self-adjoint positive-definite compact operator.
\item
If $\Omega\in (-\infty,\kappa_1)$, then 
$$
\mbox{Ker }\mathbb{L}_0^\Omega=\{0\}.
$$
\item There exists a finite set $\mathbb{S}\subset (\kappa_2,\infty)$ such that 
$$
\forall \Omega\in (\kappa_2,\infty)\backslash \mathbb{S},\quad \mbox{Ker }\mathbb{L}_0^\Omega=\{0\}.
$$

\end{enumerate}
\end{pro}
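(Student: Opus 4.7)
My plan proceeds in three stages.

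For \emph{(1)}, Fubini rewrites the iterated integral $\int_r^1 \tau^{-1}\int_0^\tau sh(s)\,ds\,d\tau = \int_0^1 sh(s)\ln(1/\max(r,s))\,ds$, so that
\[
\mathcal{L}^\Omega_0 h(r) = \int_0^1 K_0(r,s)\,h(s)\,d\lambda_\Omega(s), \qquad K_0(r,s) := \nu_\Omega(r)\nu_\Omega(s)\ln(1/\max(r,s)),
\]
the $\nu_\Omega$ factors in $K_0$ canceling the $1/\nu_\Omega$ in the measure. Symmetry of $K_0$ yields self-adjointness on $L_\Omega^2$, and square-integrability of $K_0$ against $d\lambda_\Omega\otimes d\lambda_\Omega$ (the diagonal singularity is only logarithmic while $\nu_\Omega$ is bounded on $[0,1]$) gives Hilbert--Schmidt compactness. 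For positive-definiteness, the elementary identity $\ln(1/\max(r,s)) = \int_0^1 \tau^{-1}\mathbf{1}_{\tau\geqslant r}\mathbf{1}_{\tau\geqslant s}\,d\tau$ combined with Fubini produces
\[
\langle \mathcal{L}^\Omega_0 h,h\rangle_\Omega = \int_0^1 \frac{1}{\tau}\left(\int_0^\tau s h(s)\,ds\right)^{2}d\tau,
\]
which is non-negative and vanishes only for $h\equiv 0$. Part \emph{(2)} is then immediate: for $\Omega<\kappa_1$ one has $\sigma_\Omega=-1$, so the kernel equation reads $(\mathrm{Id}+\mathcal{L}^\Omega_0)h=0$; since the spectrum of the positive compact self-adjoint operator $\mathcal{L}^\Omega_0$ is contained in $[0,\infty)$, $-1$ is not in the spectrum, so $\mathrm{Id}+\mathcal{L}^\Omega_0$ is invertible and the kernel is trivial.

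For \emph{(3)}, setting $F(r):=h(r)/\nu_\Omega(r)$ and differentiating the integral equation twice converts $h=\mathcal{L}^\Omega_0 h$ into the singular Sturm--Liouville problem
\[
(rF')'+r\nu_\Omega(r)F = 0, \qquad F(1)=0, \qquad rF'(r)\to 0 \ \text{as}\ r\to 0^+.
\]
Regularity at the origin selects a one-dimensional family of smooth solutions; normalizing $F_\Omega(0)=1$, the kernel of $\mathbb{L}^\Omega_0$ is non-trivial iff $F_\Omega(1)=0$. The Pr\"ufer substitution $F=\rho\sin\theta$, $rF'=\rho\cos\theta$ produces
\[
\theta'(r,\Omega) = \frac{\cos^2\theta}{r}+r\nu_\Omega(r)\sin^2\theta, \qquad \theta(0^+,\Omega)=\tfrac{\pi}{2},
\]
so $\theta$ is monotone increasing in $r$ and $F_\Omega(1)=0$ iff $\theta(1,\Omega)\in\pi\mathbb{N}^\star$. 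Since $\partial_\Omega\nu_\Omega<0$ pointwise on $(\kappa_2,\infty)$, Sturm comparison makes $\Omega\mapsto\theta(1,\Omega)$ continuous and strictly decreasing, with $\theta(1,\Omega)\to\pi/2$ as $\Omega\to\infty$ because $\nu_\Omega\to 0$ uniformly.

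The main obstacle is proving $L:=\lim_{\Omega\to\kappa_2^+}\theta(1,\Omega)<\infty$. Once this uniform bound is secured, strict monotonicity guarantees that $\theta(1,\cdot)$ crosses each of $\pi,2\pi,\ldots,\lfloor L/\pi\rfloor\pi$ at most once, so $|\mathbb{S}|\leqslant \lfloor L/\pi\rfloor$. To obtain the bound I would apply the Liouville change of variable $t=-\ln r$, recasting the ODE as $G''+e^{-2t}\nu_\Omega(e^{-t})G=0$ on $(0,\infty)$; since $\nu_\Omega(0)=2\tilde{f}_0'(0)/(\Omega-\kappa_1)$ is uniformly bounded for $\Omega>\kappa_2$, the coefficient $q(t,\Omega)=e^{-2t}\nu_\Omega(e^{-t})$ satisfies $t^2 q(t,\Omega)\to 0$ at infinity, and Kneser's non-oscillation theorem yields a uniform-in-$\Omega$ bound on the number of zeros of $G$ on $[T_0,\infty)$, i.e.\ of $F_\Omega$ on $(0,e^{-T_0}]$. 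On the complementary range $r\in[e^{-T_0},1]$, $r\nu_\Omega$ is comparable near $r=1$ to $f_0'(1)/[(\Omega-\kappa_2)+I'(1)(1-r)]$ with $I(r):=\int_0^1 sf_0(rs)\,ds$, and Sturm comparison against the exactly solvable Bessel-type model $F''+A/(\delta+B(1-r))F = 0$ (with $A=f_0'(1)$, $B=I'(1)$, $\delta=\Omega-\kappa_2$) shows that the rapidly oscillating boundary layer near $r=1$ has width $O(\delta)$ and frequency $O(\delta^{-1/2})$, producing only $O(\sqrt{\delta})\to 0$ oscillations, while the intermediate Bessel regime outside contributes a bounded number of zeros independent of $\delta$. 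Combining these estimates yields the desired uniform bound on $\theta(1,\Omega)$ and closes the argument.
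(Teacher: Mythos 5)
Your parts (1) and (2) are exactly the paper's argument: the same kernel $K_0(r,s)=\nu_\Omega(r)\nu_\Omega(s)\ln(1/\max(r,s))$, the same Hilbert--Schmidt bound, and the same positivity identity $\langle\mathcal{L}^\Omega_0 h,h\rangle_\Omega=\int_0^1\tau^{-1}\bigl(\int_0^\tau sh(s)\,ds\bigr)^2d\tau$, from which triviality of the kernel for $\sigma_\Omega=-1$ is immediate. For part (3) you share the paper's skeleton --- reduction to the singular Sturm--Liouville problem $(rF')'+r\nu_\Omega F=0$ with $F(1)=0$, selection of the regular solution at $r=0$, a Pr\"ufer angle, continuity and strict monotonicity of the terminal angle in $\Omega$, and finiteness of $\mathbb{S}$ from a uniform bound on that angle --- but you obtain the decisive uniform bound by a genuinely different mechanism. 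The paper compactifies with $y=1/(1-r)$ and applies Kneser's non-oscillation criterion at $y=\infty$ for every $\Omega\geqslant\kappa_2$, including the endpoint $\kappa_2$ itself, and then concludes via the monotonicity $\overline\theta(\Omega)\leqslant\overline\theta(\kappa_2)<\infty$, with monotonicity and local Lipschitz continuity of $\overline\theta$ proved by hand through the Duhamel formula for $\dot\theta_\Omega$. You instead stay in the variable $r$, use Kneser (after $t=-\ln r$) only near $r=0$ --- where uniformity in $\Omega$ is in fact cheap, since $\nu_\Omega$ is uniformly bounded away from $r=1$ --- and control the genuinely delicate region near $r=1$ by Sturm comparison with the explicitly solvable model $F''+A/(\delta+B(1-r))F=0$, whose zero count (phase integral) stays bounded as $\delta=\Omega-\kappa_2\to0^+$; this gives a uniform-in-$\Omega$ bound on $\theta(1,\Omega)$ without ever invoking the limiting equation at $\Omega=\kappa_2$. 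Both mechanisms rest on the same quantitative input, namely $\kappa_2-\int_0^1 sf_0(rs)\,ds\geqslant c(1-r)$ (the paper's Lemma \ref{prop-InvertT1}), which you invoke implicitly through the comparability of $r\nu_\Omega$ with $f_0'(1)/[(\Omega-\kappa_2)+I'(1)(1-r)]$. Your route buys a more concrete, quantitative count via an explicit Bessel-type comparison; the paper's buys a softer argument in which non-oscillation at the endpoint value $\kappa_2$ plus monotonicity does all the work. To make your sketch complete you would still need the regularity bootstrap showing that an $L^2_\Omega$ kernel element yields a $\mathscr{C}^2$ function $F=h/\nu_\Omega$ (so that differentiating the Volterra identity is legitimate), the removal of the first-order term (e.g.\ $F=r^{-1/2}\tilde F$) before comparing with an equation in the form $\tilde F''+q\tilde F=0$ near $r=1$, and a quantitative disconjugacy statement (Euler-equation comparison) in place of the bare Kneser theorem to get a zero count near $r=0$ uniform in $\Omega$ --- all routine repairs that do not affect the soundness of the approach.
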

\begin{proof}
\medskip \noindent
{\bf{(1)}} First, we write according to \eqref{kernel-eqL00} combined with integration by parts and the definition \eqref{Eq-lambda}
$$
\mathcal{L}^\Omega_0[h](r)= \int_0^1 K_0(r,s)h(s)d\lambda_\Omega(s),
$$
with 
$$
K_0(r,s):= \nu_\Omega(r)\nu_\Omega(s)\left[\ln (\tfrac{1}{r}){\bf 1}_{[0,r]}(s)+\ln (\tfrac{1}{s}){\bf 1}_{[r,1]}(s)\right].
$$
This kernel is  obviously  positive and symmetric  and  from direct computations we infer that
$$
\int_0^1\int_0^1 K_0^2(r,s)h(s)d\lambda_\Omega(r)d\lambda_\Omega(s)<\infty,
$$
 implying that $\mathcal{L}^\Omega_0:L^2_\Omega\rightarrow L^2_\Omega$  is a Hilbert-Schmidt integral operator. Consequently, $\mathcal{L}^\Omega_0$ is a self-adjoint compact operator. It remains to check that this operator is definite positive, which is equivalent to show
$$
\langle \mathcal{L}^\Omega_0[h],h\rangle _\Omega>0,
$$
for any $h\in L^2_\Omega$ non identically zero. To proceed, we write in view of \eqref{scalar-prod}
 \begin{align*}
\langle \mathcal{L}^\Omega_0[h],h\rangle_\Omega=&\int_0^1\int_0^r \ln(\tfrac1r) h(s)h(r)srdsdr+\int_0^1\int_r^1 \ln(\tfrac1s) h(s) h(r) rsdsdr\\
=&\int_0^1 \ln(\tfrac1r) h(r) r\int_0^r sh(s) dsdr+\int_0^1 rh(r)\int_r^1 \ln(\tfrac1s) h(s)sdsdr.
\end{align*}
Note that
$$
\int_r^1 \ln(s) h(s) sds=-\ln(r)\int_0^r sh(s)ds-\int_r^1 \frac{1}{s}\int_0^s \tau h(\tau) d\tau ds,
$$
and therefore we deduce by integration by parts that
\begin{align}\label{posit-imp}
\nonumber \langle \mathcal{L}^\Omega_0[h],h\rangle _\Omega
=&\int_0^1 rh(r)\int_r^1 \frac{1}{s}\int_0^s\tau h(\tau)d\tau ds dr\\
=&\int_0^1\left(\int_0^r s h(s)d s\right)^2   \frac{dr}{r} >0,
\end{align}
which ensures the announced result.

\medskip \noindent
{\bf{(2)}}
 The kernel equation \eqref{kernel-eqL00} reduces to the following integral problem,
\begin{align}\label{Eq-lin-rad}
\sigma_\Omega \mathcal{L}^\Omega_0[h](r)=h(r).
\end{align}
By virtue of \eqref{nu-mu}, if  $\Omega\in(-\infty,\kappa_1)$ then $\sigma_\Omega=-1$ and therefore \eqref{Eq-lin-rad} becomes
\begin{align}\label{aaa}
 \mathcal{L}^\Omega_0[h](r)+h(r)=0.
\end{align}
Taking the scalar product in  $L^2_\Omega$  (which refers to $L^2(\lambda_{\Omega})$),  of the function in \eqref{aaa} with $h$, together with \eqref{posit-imp}  yields
\begin{align*}
 \int_0^1\left(\int_0^r s h(s)d s\right)^2   \frac{dr}{r}+\|h\|_{\Omega}^2=0.
\end{align*}
This implies that $h$ is identically zero and thus the kernel of $\mathbb{L}^\Omega_0$  is trivial.

\medskip \noindent
{\bf{(3)}} According to  \eqref{nu-mu}, if  $\Omega\in(\kappa_2,\infty)$ then $\sigma_\Omega=1$ and therefore \eqref{kernel-eqL00} becomes
$$
{h(r)}=
 {\nu_\Omega(r)}\int_r^1\frac{1}{\tau}\int_0^\tau sh(s)dsd\tau,$$
with $h\in L^2_\Omega$. Assume now that this Volterra integral equation admits a nonzero smooth solution $h$  and let us   define $F:=\frac{h}{\nu_\Omega}$. Then this   nonzero  function satisfies 
\begin{align}\label{Eq-ty1}
\forall r\in[0,1],\quad \int_r^1\frac{1}{\tau}\int_0^\tau s \nu_\Omega (s) F(s)dsd\tau=F(r),
\end{align}
with
\begin{align}\label{tuni-12}
\int_0^1 F^2(r) r\nu_\Omega(r)dr<\infty.
\end{align}
We shall show that $F$ is actually continuous on $[0,1]$.
Applying Lemma \ref{prop-InvertT1} with $\theta=1$ yields  
\begin{align}\label{mu-omPL1}
 0\leqslant r\,{\nu}_\Omega(r)
&\leqslant \frac{\|f_0^\prime\|_{L^\infty}}{\Omega-\kappa_2},\quad \forall\,  r\in (0,1].
\end{align}
Combining \eqref{mu-omPL1} with Cauchy-Schwarz inequality and \eqref{tuni-12} allows to get
\begin{align*}
\nonumber \left(\int_0^\tau |F(s)|\, s \nu_\Omega (s) ds\right)^2&\leqslant \int_0^\tau |F(s)|^2 s \nu_\Omega (s)  ds \int_0^\tau s \nu_\Omega (s) ds \leqslant C_0 \tau.
\end{align*}
Consequently, the left hand side in \eqref{Eq-ty1} is well-defined uniformly in $r\in[0,1]$ and therefore from this identity we deduce that $F$ is continuous on $[0,1].$
Using this property together with the continuity of $\nu_\Omega$ we deduce that the left hand side in \eqref{Eq-ty1} is $\mathscr{C}^1$ on $[0,1]$. Actually, we get that $F$ is of class $\mathscr{C}^2$ on $[0,1]$, note that using a change of variable in \eqref{Eq-ty1} we get
$$
F''(r)=\int_0^1 s\nu_\Omega(sr)F(sr)ds-\nu_\Omega(r)F(r).
$$
 Now, differentiating  \eqref{Eq-ty1} we find the Sturm-Liouville equation
$$
(rF^\prime)^\prime+r \nu_\Omega(r)F=0,
$$
supplemented with the condition (which follows from  \eqref{Eq-ty1})
\begin{align}\label{Bc_Mon}
F(1)=0.
\end{align}
Introduce the auxilary function $G_\Omega:[1,\infty)\to \R$ through 
$$
\forall r\in[0,1)\quad F(r)=G_\Omega(\tfrac{1}{1-r}).
$$
Notice that $G_\Omega$ is of class $\mathscr{C}^2$ on $[1,\infty)$. 
Then by straightforward computations and making use of  the notations
 $$y=\frac{1}{1-r},\quad {\mu}_\Omega(y):={\nu}_\Omega(1-\tfrac{1}{y}),
$$
we deduce that
\begin{align}\label{Gomega1}
\left((y^2-y)G_\Omega^{\prime}(y)\right)^\prime+\tfrac{y-1}{y^3}{\mu}_\Omega(y)G_\Omega(y)=0, y\in(1,\infty).
\end{align}
In addition, the boundary condition \eqref{Bc_Mon} writes
\begin{equation}\label{Bc-H1}
\lim_{y\to\infty}G_{\Omega}(y)=0.
\end{equation}
If $G_\Omega(1)=0$ then from the equation \eqref{Gomega1} we get that $G_\Omega^\prime(1)=0$ and by a slight adaptation of the uniqueness result for the Cauchy problem with singular coefficients we should get $G_\Omega$ is identically zero. This contradicts the fact that $F$ is a nonzero function. Notice that this argument  shows that the set of smooth solutions on $[1,\infty)$ to this singular ODE is of dimension one generated by one nonzero element  still denoted by $G_\Omega$ and satisfies  by normalization 
\begin{align}\label{Initial1}G_\Omega(1)=1.
\end{align}
In this way we uniquely  construct a mapping $\Omega\in(\kappa_2,\infty)\mapsto G_\Omega$ and each element $G_\Omega$ is of   class $\mathscr{C}^2$ on $[1,\infty)$  and satisfies \eqref{Gomega1}. The next task is to  explore  the values $\Omega$ for which $G_\Omega$ matches with the boundary condition  \eqref{Bc-H1}.
First, we shall show that  $G_\Omega$ does not oscillate close to $\infty$ for any $\Omega\geqslant\kappa_2$. For this aim we introduce the auxiliary function 
$$
v(y)=\sqrt{y^2-y}\, G_\Omega(y).
$$
Then from straightforward computations we get
\begin{align}\label{second-orde}
v^{\prime\prime}(y)+V_\Omega(y) v(y)=0,
\end{align}
with
$$
V_\Omega(y)=\frac{1}{4}\frac{1}{(y^2-y)^2}+\frac{1}{y^4} {\mu}_\Omega(y).
$$
Kneser's  Theorem  \cite[pp. $414-418$]{Kneser} states that a sufficient condition of  the non-oscillation for any nonzero solution  (that is, it has a finite number of zeroes in the region $[y_0,\infty)$ for some $y_0>1$) to \eqref{second-orde} or \eqref{Gomega1}  at infinity is
\begin{align}\label{Knese1}
\limsup_{y\to \infty} y^2V_\Omega(y)<\frac14\cdot
\end{align}
Using Lemma \ref{prop-InvertT1} with $\theta=0$ we get a constant $C_0$  such that for any $\Omega\geqslant \kappa_2$,
$$
0\leqslant V_\Omega(y)\leqslant \frac{1}{4}\frac{1}{(y^2-y)^2}+\frac{C_0}{y^3}\cdot
$$
Thus
\begin{align*}
\limsup_{y\to \infty} y^2V_\Omega(y)=0<\frac14\cdot
\end{align*}
Consequently, there is no oscillation at $\infty$ for any $\Omega\geqslant \kappa_2$. This implies  that for $\Omega\geqslant \kappa_2$,  there exists a real number $y_\Omega>1$ such that
\begin{align}\label{Knese2}
\forall y\geqslant y_\Omega, \quad |G_\Omega(y)|>0.
\end{align}
Next, we come back to \eqref{Gomega1} and use  Pr\"{u}fer transformation through the relations  \begin{align}\label{pourq1}
G_\Omega(y)=\rho(y)\sin(\theta_\Omega(y)),\quad (y^2-y)G_\Omega^\prime(y)=\rho(y)\cos(\theta_\Omega(y)), \forall y\geqslant 1.
\end{align}
Then, we  find the  weakly coupled ODE's
\begin{align}\label{pourT8}
\theta_\Omega^\prime(y)=\frac{1}{y^2-y}\cos^2(\theta_\Omega(y))+\frac{y-1}{y^3} {\mu}_\Omega(y)\sin^2(\theta_\Omega(y))
\end{align}
and
\begin{align}\label{pour8}
\rho_\Omega^\prime(y)=\left(\frac{1}{y^2-y}-\frac{y-1}{y^3} {\mu}_\Omega(y)\right)\rho_\Omega(y)\sin(\theta_\Omega(y))\cos(\theta_\Omega(y)).
\end{align}
The boundary  condition \eqref{Initial1} matches with (note that $\theta_\Omega(1)$ is not uniquely determined)
\begin{align}\label{Ini-conw}
\rho_\Omega(1)=1,\quad \theta_\Omega(1)=\tfrac\pi2.
\end{align}
The system \eqref{pourT8} and \eqref{pour8} is globally well-defined  for any  $y\in[1,\infty).$ Notice that by uniqueness of the Cauchy problem for \eqref{pour8} we should get 
 $$\rho_\Omega(y)> 0,\forall y\in [1,\infty).
 $$
Consequently, for any $\Omega\geqslant\kappa_2,$ 
 $$
\mathcal{Z}(G_\Omega):= \Big\{y>1, G_\Omega(y)=0 \Big\}=\Big\{ y>1, \theta_\Omega(y)\in \pi \mathbb{Z}\Big\}.
 $$
We remark from \eqref{pourT8} that $y\mapsto \theta_\Omega(y)$ is strictly increasing. Since the ODE is not oscillating then   the set $\mathcal{Z}(G_\Omega)$ is finite implying that $\theta_\Omega$ is bounded for each $\Omega\geqslant\kappa_2$. Hence $\lim_{y\to\infty}  \theta_\Omega(y)$ exists and is finite. Denote by
\begin{align}\label{Conv-ppp}
\overline\theta(\Omega)=\lim_{y\to\infty}  \theta_\Omega(y),
\end{align}
then for any $\Omega\geqslant \kappa_2,$
\begin{align}\label{Es-ui}
\forall\, y>1,\quad \tfrac\pi2<\theta_\Omega(y)<\overline\theta(\Omega).
\end{align}
We intend to show that $\Omega\in[\kappa_2,\infty)\mapsto \overline\theta(\Omega)$ is strictly decreasing. Before that we shall show that for any $y>1$, $\Omega\in[\kappa_2,\infty)\mapsto \theta_\Omega(y)$ is strictly decreasing. Set $\dot{\theta}_\Omega(y)=\partial_\Omega  \theta_\Omega(y),$ then differentiating \eqref{pourT8} yields
\begin{align}\label{pourT008}
\nonumber \dot\theta_\Omega^\prime(y)&=\left(-\frac{1}{y^2-y}+\frac{y-1}{y^3}{\mu}_\Omega(y)\right)\sin(2\theta_\Omega(y))\,\dot\theta_\Omega(y)-\frac{(y-1) {\mu}_\Omega(y)}{y^3(\Omega-\int_0^1sf_0(ys)ds)}\sin^2(\theta_\Omega(y))\\
&:=A_\Omega(y)\dot\theta_\Omega(y)-B_\Omega(y).
\end{align}
Remark that from \eqref{Ini-conw} one gets $\dot\theta_\Omega(1)=0$.
{Notice $A_\Omega$ is continuous on $[1,\infty)$ since
$$
\lim_{y\rightarrow 1^-} A_\Omega=2\theta_\Omega'(1)=0,
$$
where we find $\theta_\Omega'(1)=0$ using the differential equation \eqref{pourT8} together with \eqref{Ini-conw}.
}
 Moreover $B_\Omega\geqslant0$ and not identically zero for any open set in $(1,\infty)$. By Duhamel formula we get
\begin{align}\label{Pira1}
\dot\theta_\Omega(y)=-\int_1^y e^{\int_s^y A_\Omega(s) ds}B_\Omega(s)ds.
\end{align}
Therefore
$$
\forall\, y>1, \quad \dot\theta_\Omega(y)<0.
$$
This shows that $\Omega\in[\kappa_2,\infty)\mapsto \theta_\Omega(y)$ is strictly decreasing, implying in turn  that
$$
\forall y>1,\forall \,\Omega>\kappa_2,\quad \tfrac\pi 2<\theta_\Omega(y)<\theta_{\kappa_2}(y).
$$
Passing to the limit as $y\to \infty$ and using \eqref{Conv-ppp} we infer 
\begin{align}\label{Up0SS1}
\frac\pi 2< \overline \theta_\Omega\leqslant \overline\theta_{\kappa_2}.
\end{align}
Coming back to \eqref{Pira1} we deduce from the positivity of $B_\Omega$ that for $y\geqslant 2$,
\begin{align}\label{Pira2}
\dot\theta_\Omega(y)\leqslant -\int_2^y e^{\int_s^y A_\Omega(\tau) d\tau}B_\Omega(s)ds.
\end{align}
On the other hand, using  Lemma  \ref{prop-InvertT1} with $\theta=1$ we find
\begin{align}
A_\Omega(y)\geqslant -\frac{1}{y^2-y}-\frac{C_0}{y^2(\Omega-\kappa_2)},
\end{align}
implying that for any $2\leqslant s\leqslant y$
\begin{align*}
\int_s^y A_\Omega(\tau)d\tau&\geqslant -\int_{2}^{\infty}\left(\frac{1}{\tau^2-\tau}+\frac{C_0}{\tau^2(\Omega-\kappa_2)}\right)d\tau\geqslant -\ln(2)-\frac{C_0}{\Omega-\kappa_2}.
\end{align*}
The constant $C_0$ may vary from line to line.  Moreover,
\begin{align}\label{minmin}
e^{\int_s^y A_\Omega(\tau)d\tau}
&\geqslant \tfrac{1}{2}e^{-\frac{C_0}{\Omega-\kappa_2}},
\end{align}
and hence  we get from \eqref{Pira2}
\begin{align}\label{Pira3}
\forall\, y\geqslant 2,\quad \dot\theta_\Omega(y)\leqslant -\tfrac{1}{2}e^{-\frac{C_0}{\Omega-\kappa_2}} \int_2^y B_\Omega(s)ds.
\end{align}
Consider $\kappa_2<\Omega_1<\Omega_2$, then Taylor formula implies that
\begin{align}\label{Douy}
\nonumber\overline \theta_{\Omega_2}-\overline \theta_{\Omega_1}=& \big(\overline\theta_{\Omega_2}-\theta_{\Omega_2}(y)\big)+\big(\theta_{\Omega_1}(y)-\overline \theta_{\Omega_1}\big)+\big(\theta_{\Omega_2}(y)-\theta_{\Omega_1}(y)\big)\\
&=\big(\overline\theta_{\Omega_2}-\theta_{\Omega_2}(y)\big)+\big(\theta_{\Omega_1}(y)-\overline \theta_{\Omega_1}\big)+(\Omega_2-\Omega_1)\int_0^1\dot\theta_{\tau\Omega_2+(1-\tau)\Omega_1}(y)d\tau.
\end{align}
This yields in view of \eqref{Pira3}
\begin{align}\label{apri9}
\nonumber\forall y\geqslant 2,\quad \overline \theta_{\Omega_2}-\overline \theta_{\Omega_1}&\leqslant \big(\overline\theta_{\Omega_2}-\theta_{\Omega_2}(y)\big)+\big(\theta_{\Omega_1}(y)-\overline \theta_{\Omega_1}\big)\\
&-\frac{1}{2}e^{-\frac{C_0}{\Omega_1-\kappa_2}}(\Omega_2-\Omega_1)\int_0^1\int_2^y B_{\tau\Omega_2+(1-\tau)\Omega_1}(s)ds d\tau.
\end{align}
By the monotone convergence theorem
$$
\lim_{y\to\infty}\int_0^1\int_2^y B_{\tau\Omega_2+(1-\tau)\Omega_1}(s)ds d\tau=\int_0^1\int_2^\infty B_{\tau\Omega_2+(1-\tau)\Omega_1}(s)ds d\tau.
$$
Taking the limit as $y\to\infty$ in \eqref{apri9} and using \eqref{Conv-ppp} we obtain
\begin{align*}
\overline \theta_{\Omega_2}-\overline \theta_{\Omega_1}&\leqslant 
-\frac{1}{2}e^{-\frac{C_0}{\Omega_1-\kappa_2}}(\Omega_2-\Omega_1)\int_0^1\int_2^\infty B_{\tau\Omega_2+(1-\tau)\Omega_1}(s)ds d\tau.
\end{align*}
Since $B_{\Omega}$ is positive and not identically zero in any open set then
\begin{align}\label{apri10}
\overline \theta_{\Omega_2}-\overline \theta_{\Omega_1}&<0.
\end{align}
This shows that the map $\Omega\in(\kappa_2,\infty)\mapsto \overline \theta_{\Omega}$ is strictly decreasing. By slight modification of the preceding argument we intend to   show that this map is locally  Lipschitz. Indeed, By virtue  of \eqref{Ini-conw}, \eqref{pourT008} and Lemma \ref{prop-InvertT1}
\begin{align*}
|A_\Omega(y)|&\leqslant \frac{\left|\sin(2\theta_\Omega(y))-\sin(2\theta_\Omega(1))\right|}{y^2-y}+\frac{C_0}{y^2(\Omega-\kappa_2)}\\
&\leqslant \frac{C_1}{y^2}+\frac{C_0}{y^2(\Omega-\kappa_2)}\cdot
\end{align*}
In the same way we get
\begin{align*}
0\leqslant B_\Omega(y)&= \frac{(y-1){\mu}_\Omega(y)}{y^3(\Omega-\int_0^1sf_0(rs)ds)}\sin^2(\theta_\Omega)\leqslant  \frac{C_0}{y^2(\Omega-\kappa_2)^2}.
\end{align*}
Plugging these estimates into \eqref{Pira1} we find for any $y\geqslant1$
\begin{align*}
|\dot\theta_\Omega(y)|&\leqslant  e^{\int_1^\infty A_\Omega(s) ds} \int_1^\infty B_\Omega(s)ds\\
&\leqslant C_0 e^{\frac{C_0}{(\Omega-\kappa_2)^2}}.
\end{align*}
Combining this estimate with  \eqref{Douy} we infer
 \begin{align*}
|\overline \theta_{\Omega_2}-\overline \theta_{\Omega_1}|
&\leqslant \big|\overline\theta_{\Omega_2}-\theta_{\Omega_2}(y)\big|+\big|\theta_{\Omega_1}(y)-\overline \theta_{\Omega_1}\big|+(\Omega_2-\Omega_1)\int_0^1\big|\dot\theta_{\tau\Omega_2+(1-\tau)\Omega_1}(y)\big|d\tau\\
&\leqslant \big|\overline\theta_{\Omega_2}-\theta_{\Omega_2}(y)\big|+\big|\theta_{\Omega_1}(y)-\overline \theta_{\Omega_1}\big|+C_0(\Omega_2-\Omega_1)e^{\frac{C_0}{(\Omega_1-\kappa_2)^2}}.
\end{align*}
By taking $y\to \infty$ we get
\begin{align*}
|\overline \theta_{\Omega_2}-\overline \theta_{\Omega_1}|\leqslant C_0(\Omega_2-\Omega_1)e^{\frac{C_0}{(\Omega_1-\kappa_2)^2}}.
\end{align*}
This shows that  $\Omega\in(\kappa_2,\infty)\mapsto \overline \theta_{\Omega}$  is locally Lipschitz and in particular it is continuous on $(\kappa_2,\infty)$.
Finally, we have proved that  $\Omega\in(\kappa_2,\infty)\mapsto \overline \theta_{\Omega}$ is continuous,  strictly decreasing and bounded in view of \eqref{Up0SS1}.  Consequently the set 
\begin{align}\label{Pttt1}
\mathbb{S}:=\big\{\Omega>\kappa_2, \overline \theta_{\Omega}\in \pi \mathbb{Z}\big\},
\end{align}
is finite. Now, 
by virtue of  \eqref{pour8}, \eqref{Ini-conw} and \eqref{pourT008} we write in view of Gronwall equality and \eqref{minmin} ( which remains true with $-A_\Omega$)
\begin{align*}
\forall\, y\geqslant 1,\quad \rho_\Omega(y)&=e^{-\frac12\int_{1}^yA_\Omega(s)ds}\geqslant \tfrac{1}{2}e^{-\frac{C_0}{\Omega-\kappa_2}}\cdot
\end{align*}
Hence, we obtain from \eqref{pourq1} that for any $\Omega>\kappa_2$
\begin{align*}
\forall\, y\geqslant 1,\quad |G_\Omega(y)|\geqslant \tfrac{1}{2}e^{-\frac{C_0}{\Omega-\kappa_2}}|\sin(\theta_\Omega(y))|,
\end{align*}
implying  in view  of \eqref{Bc-H1} and \eqref{Conv-ppp}
\begin{align*}
0= |\lim_{y\to\infty}G_\Omega(y)|&\geqslant \tfrac{1}{2}e^{-\frac{C_0}{\Omega-\kappa_2}}|\sin(\overline{\theta}_\Omega)|.
\end{align*}
Therefore we deduce that necessarily 
\begin{align*}
\overline{\theta}_\Omega\in\pi \Z.
\end{align*}
Consequently,  if the equation \eqref{Eq-ty1} admits a smooth nonzero  solution in $[0,1]$ for some $\Omega>\kappa_2$  then necessarily we should have $\Omega\in \mathbb{S}$ defined in \eqref{Pttt1}, which is proved to be finite. Consequently if $\Omega\in(\kappa_2,\infty)\backslash \mathbb{S}$ then  the equation \eqref{Eq-ty1} admits only the trivial solution in the smooth class. This concludes the proof of the desired result.
\end{proof}

\subsection{Kernel of $\mathbb{L}_n^\Omega, \,n\geqslant1$.}\label{Sec-KerneLn} The main goal of this section  is to explore  some qualitative properties   on the kernel equation \eqref{kernel-eq} that will be used later to derive  the dispersion equation. One of them is related to the positive definite structure of an intermediate Volterra integral operator.   Let  $n\geqslant1$ and  define the operator
\begin{equation}\label{Op-L} 
\mathcal{L}^\Omega_n[h](r):=\frac{\nu_\Omega(r)}{2nr^{n}}H_n[h](r),
\end{equation}
where $H_n$ is given by \eqref{Hn-def}. Then the equation \eqref{kernel-eq} is equivalent to
\begin{equation}\label{kernel-eqbis}
\big(\hbox{Id}-\sigma_\Omega \mathcal{L}^\Omega_n\big)[h_n](r)=-\frac{H_n[h_n](1)}{2nG_n(1)}\mu_\Omega^0(r)r G_n(r),
\end{equation}
where we use the notation in  \eqref{nu-mu}.
 Next, we intend to analyze some spectral properties of the operator $\mathcal{L}^\Omega_n$. For this purpose, we  write it in  the following integral form depending on the positive measure  $\lambda_\Omega$ introduced in \eqref{Eq-lambda},
\begin{align*}
\mathcal{L}^\Omega_n[h](r)=\int_0^1 {K}_n(r,s)h(s) d\lambda_\Omega(s),
\end{align*}
where  its kernel is symmetric  and it has the following expression
\begin{align*}
{K}_n(r,s):=\frac{\nu_\Omega(r)\nu_\Omega(s)}{2n}\left[\left(\frac{r}{s}\right)^n {\bf 1}_{\{r\leqslant s\leqslant 1\}}+\left(\frac{s}{r}\right)^n {\bf 1}_{\{0\leqslant s\leqslant r\}}\right].
\end{align*}

The first main result of this section reads as follows. 

\begin{pro}\label{pro-positivity}
Let $f_0$ satisfy \eqref{H2}-\eqref{H1} and $\Omega\notin[\kappa_1,\kappa_2]$, with the notations \eqref{kappa1} and \eqref{kappa2}. Then,  the operator $\mathcal{L}^\Omega_n: L^2_{\Omega}\rightarrow L^2_{\Omega}$ is a self-adjoint  Hilbert-Schmidt  operator.  In addition, it is positive-definite and   all its eigenvalues are strictly positive.
\end{pro}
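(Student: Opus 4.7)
The plan is to verify the three claims (self-adjoint, Hilbert--Schmidt, positive definite with strictly positive spectrum) in turn, mirroring the strategy already used for $\mathcal{L}_0^\Omega$ in Proposition \ref{radialfunctions}(1), but adapted to the new kernel
\[
K_n(r,s)=\frac{\nu_\Omega(r)\nu_\Omega(s)}{2n}\left[\left(\tfrac{r}{s}\right)^n\mathbf{1}_{\{r\leqslant s\}}+\left(\tfrac{s}{r}\right)^n\mathbf{1}_{\{s\leqslant r\}}\right].
\]
Self-adjointness is immediate from the symmetry $K_n(r,s)=K_n(s,r)$ and the fact that $K_n$ is real. For the Hilbert--Schmidt property I would check that $\iint K_n^2(r,s)\,d\lambda_\Omega(r)\,d\lambda_\Omega(s)<\infty$. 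Using $d\lambda_\Omega(s)=\frac{s}{\nu_\Omega(s)}ds$ the weights $\nu_\Omega$ cancel, and one is left with a convergent integral of type $\iint_{\{r\leqslant s\}}(r/s)^{2n}rs\,drds$ up to the multiplicative factor $\|\nu_\Omega\|_{L^\infty}^2/(4n^2)$. Boundedness of $\nu_\Omega$ on $[0,1]$ follows from Lemma \ref{prop-InvertT1} (taking $\theta=1$) combined with the regularity hypothesis \eqref{H2} which gives $f_0'(r)/r=2\tilde f_0'(r^2)\in L^\infty$; in the defocusing regime $\Omega<\kappa_1$ an analogous bound holds trivially. Hilbert--Schmidt implies compact.

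The heart of the proof is the positive definiteness. Starting from
\[
\langle\mathcal{L}^\Omega_n h,h\rangle_\Omega=\int_0^1\int_0^1 K_n(r,s)h(r)h(s)\,d\lambda_\Omega(r)\,d\lambda_\Omega(s),
\]
the $\nu_\Omega$ factors cancel once again. Splitting via the two indicators and exploiting the symmetry $r\leftrightarrow s$ of the integrand, the double integral collapses to
\[
\langle\mathcal{L}^\Omega_n h,h\rangle_\Omega=\frac{1}{n}\int_0^1 r^{1-n}h(r)\left(\int_0^r s^{n+1}h(s)\,ds\right)dr.
\]
The natural substitution is then $F(r):=\int_0^r s^{n+1}h(s)\,ds$, so that $F'(r)=r^{n+1}h(r)$ and the right-hand side becomes $\frac{1}{2n}\int_0^1 r^{-2n}(F^2)'(r)\,dr$. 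A single integration by parts yields
\[
\langle\mathcal{L}^\Omega_n h,h\rangle_\Omega=\frac{1}{2n}F^2(1)+\int_0^1 r^{-2n-1}F^2(r)\,dr,
\]
the boundary term at $r=0$ vanishing because $|F(r)|\lesssim r^{n+2}\|h\|_\infty$ near $0$ (for the density argument below it suffices to treat continuous test functions first). This is manifestly non-negative, and it vanishes only if $F\equiv 0$, which forces $h\equiv 0$ almost everywhere. Since $\mathcal{L}_n^\Omega$ is self-adjoint, compact, and strictly positive, its spectrum consists of a sequence of strictly positive eigenvalues (accumulating only at $0$), which is exactly the announced conclusion.

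The only technical subtlety I anticipate is justifying the vanishing boundary term and the density argument: strictly one should first carry out the integration by parts for $h\in\mathscr{C}([0,1])$ where $F(r)=O(r^{n+2})$ is clear, and then extend by a density-continuity argument to general $h\in L^2_\Omega$, using that both sides of the identity depend continuously on $h$ in the $L^2_\Omega$-norm (the left side by Hilbert--Schmidt boundedness, the right side by monotone convergence and Cauchy--Schwarz in the measure $r^{-2n-1}dr$ when restricted to the range of continuous functions, together with positivity).
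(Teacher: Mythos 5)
Your proposal is correct and follows essentially the same route as the paper: symmetry of $K_n$ for self-adjointness, boundedness of $\nu_\Omega$ (via \eqref{H2} and the distance of $\Omega$ to $[\kappa_1,\kappa_2]$) for the Hilbert--Schmidt bound, and a reduction of the quadratic form by integration by parts to the identity
$\langle\mathcal{L}^\Omega_n h,h\rangle_\Omega=\frac{1}{2n}\big(\int_0^1 s^{n+1}h(s)\,ds\big)^2+\int_0^1 r^{-2n-1}\big(\int_0^r s^{n+1}h(s)\,ds\big)^2\,dr$, which is exactly the paper's \eqref{N-iden}. Your symmetrization of the double integral before integrating by parts, and the remark that the boundary term and the extension to general $h\in L^2_\Omega$ can be handled by Cauchy--Schwarz or density, are only minor reorganizations of the same computation.
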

\begin{proof}
The operator is symmetric which follows from  ${K}_n(r,s)=K_n(s,r)$. Moreover, we can easily check by using \eqref{mu-omPL1}  that
\begin{align*}
\|K_n\|_{\Omega}^2=&\int_0^1\int_0^1 \nu_\Omega(r)\nu_\Omega(s)rs\left\{\left(\frac{r}{s}\right)^{2n} {\bf 1}_{\{r\leqslant s\leqslant 1\}}+\left(\frac{s}{r}\right)^{2n} {\bf 1}_{\{0\leqslant s\leqslant r\}}\right\}drds\\
\leqslant & C\int_0^1\int_r^1 \left(\frac{r}{s}\right)^{2n}dsdr+C\int_0^1\int_0^r \left(\frac{s}{r}\right)^{2n}dsdr\leqslant  C(\Omega,f_0),
\end{align*}
and therefore  the operator is self-adjoint Hilbert-Schmidt  operator. Let us emphasize that the norm of the Hilbert-Schmidt operator $\mathcal{L}_n^\Omega$ depends on $\Omega$ and $f_0$.

Let us now move to the positivity of $\mathcal{L}^\Omega_n$.
Take a nonzero element $h\in L^2_\Omega$  and let us compute $\langle\mathcal{L}^\Omega_n[h], h\rangle_\Omega$ according to  \ref{scalar-prod}. By the definition  one has
\begin{align*}
\langle\mathcal{L}^\Omega_n[h], h\rangle_\Omega=&\frac{1}{2n}\int_0^1\int_0^1 h(s) h(r) sr\left\{\left(\frac{r}{s}\right)^n{\bf 1}_{[r,1]}(s)+\left(\frac{s}{r}\right)^n{\bf 1}_{[0,r]}(s)\right\}dsdr\\
=&\frac{1}{2n}\int_0^1 h(r) r^{1+n}\int_r^1 h(s) s^{1-n}ds dr+\frac{1}{2n}\int_0^1 r^{1-n}h(r)\int_0^r h(s) s^{1+n} ds dr.
\end{align*}
Integration by parts allows to get
\begin{align*}
\int_r^1 h(s) s^{1-n}ds=&\int_r^1 h(s) s^{n+1} s^{-2n}ds\\
=&\int_0^1 h(\tau)\tau^{n+1}d\tau-r^{-2n}\int_0^r h(\tau) \tau^{n+1}d\tau\\
&+2n\int_r^1 s^{-1-2n}\int_0^{{s}} h(\tau) \tau^{n+1}d\tau ds.
\end{align*}
Hence we get through straightforward computations 
\begin{align*}
\langle\mathcal{L}^\Omega_n[h], h\rangle_\Omega
=&\frac{1}{2n}\int_0^1 h(r) r^{1+n}\left[\int_0^1 h(\tau)\tau^{n+1}d\tau-r^{-2n}\int_0^r h(\tau) \tau^{n+1}d\tau\right]dr\\
&+\int_0^1 h(r) r^{1+n}\left[\int_r^1 s^{-1-2n}\int_0^{{s}} h(\tau) \tau^{n+1}d\tau ds\right] dr\\
&+\frac{1}{2n}\int_0^1 r^{1-n}h(r)\int_0^r h(s) s^{1+n} ds dr.
\end{align*}
Using integration by parts yields 
$$
\int_0^1 h(r) r^{1+n}\int_r^1 s^{-1-2n}\int_0^{{s}} h(\tau) \tau^{n+1}d\tau ds=\int_0^1 r^{-2n-1}\left(\int_0^r s^{n+1}h(s)ds\right)^2 dr.
$$
Therefore  we find the reduced form 
\begin{align}\label{N-iden}
\langle\mathcal{L}^\Omega_n[h], h\rangle_\Omega=&\frac{1}{2n}\left(\int_0^1 h(r) r^{1+n} dr\right)^2+\int_0^1 r^{-2n-1}\left(\int_0^r s^{n+1}h(s)ds\right)^2 dr> 0.
\end{align}
This shows that  $\mathcal{L}^\Omega_n$ is positive-definite. As a consequence, we get that all the eigenvalues are strictly positive.
\end{proof}

Next, we will discuss  the invertibility of the linear operator $(\mbox{Id}-\sigma_\Omega\mathcal{L}_n^\Omega)$ is invertible which  depends on the choice of $\Omega$.

\begin{pro}\label{prop-Invert1}
Let  $f_0$ satisfy \eqref{H2}-\eqref{H1} and $n\geqslant 1$. The following assertions hold true.
\begin{enumerate}
\item If  $\Omega\in(-\infty,\kappa_1)$, then $\textnormal{Id}+\mathcal{L}^\Omega_n: L^2_\Omega\to  L^2_\Omega $ is invertible. 
\item  Let $\Omega\in(\kappa_2,\infty).$ There exists $C_0>0$ depending only on $f_0$ such that if for some $\theta\in(0,1]$
$$
\frac{C_0}{n \theta (\Omega-\kappa_2)^\theta}<1,
$$
then $\textnormal{Id}-\mathcal{L}^\Omega_n$ is invertible.
\end{enumerate}
\end{pro}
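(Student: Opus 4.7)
\textbf{Proof plan for Proposition \ref{prop-Invert1}.}

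\emph{Part (1).} When $\Omega\in(-\infty,\kappa_1)$ one has $\sigma_\Omega=-1$, so the operator in question is $\mathrm{Id}+\mathcal{L}_n^\Omega$. Since Proposition \ref{pro-positivity} gives that $\mathcal{L}_n^\Omega$ is a self-adjoint positive-definite compact operator on $L^2_\Omega$, its spectrum is contained in $[0,\infty)$. Consequently the spectrum of $\mathrm{Id}+\mathcal{L}_n^\Omega$ is contained in $[1,\infty)$, so this operator is bounded below and its image is closed, dense and therefore equal to $L^2_\Omega$, giving the invertibility.

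\emph{Part (2).} The strategy is to estimate the operator norm of the self-adjoint positive operator $\mathcal{L}_n^\Omega$ and conclude by Neumann series. Since $\mathcal{L}_n^\Omega$ is self-adjoint and positive-definite,
$$
\|\mathcal{L}_n^\Omega\|_{\mathrm{op}}=\sup_{\|h\|_\Omega=1}\langle \mathcal{L}_n^\Omega[h],h\rangle_\Omega,
$$
and I would start from the closed-form identity \eqref{N-iden} obtained in the proof of Proposition \ref{pro-positivity}, namely
$$
\langle\mathcal{L}^\Omega_n[h],h\rangle_\Omega=\frac{1}{2n}\Big(\int_0^1 h(r)\,r^{n+1}dr\Big)^{\!2}+\int_0^1 r^{-2n-1}\Big(\int_0^r s^{n+1}h(s)ds\Big)^{\!2}dr.
$$
I would apply Cauchy--Schwarz in $L^2_\Omega=L^2(d\lambda_\Omega)$ to each inner integral, writing $s^{n+1}h(s)=\big(s^{n+1/2}\sqrt{\nu_\Omega(s)}\big)\cdot\big(h(s)\sqrt{s/\nu_\Omega(s)}\big)$, which gives
$$
\Big(\int_0^r s^{n+1}h(s)ds\Big)^{\!2}\leqslant \|h\|_\Omega^2\int_0^r s^{2n+1}\nu_\Omega(s)ds,
$$
and likewise for the first term with $r=1$. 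Inserting these bounds, swapping the order of integration via Fubini in the double integral and computing $\int_s^1 r^{-2n-1}dr=\tfrac{s^{-2n}-1}{2n}$, I obtain
$$
\langle\mathcal{L}^\Omega_n[h],h\rangle_\Omega\leqslant \frac{\|h\|_\Omega^2}{n}\int_0^1 s\,\nu_\Omega(s)\,ds.
$$
At this point I invoke Lemma \ref{prop-InvertT1}, which yields, for any $\theta\in(0,1]$, the pointwise bound $s\,\nu_\Omega(s)\leqslant \tfrac{C_0}{(\Omega-\kappa_2)^\theta (1-s)^{1-\theta}}$ with $C_0$ depending only on $\|f_0'\|_{L^\infty}$. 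Integrating in $s$ gives $\int_0^1 s\,\nu_\Omega(s)\,ds\leqslant \tfrac{C_0}{\theta(\Omega-\kappa_2)^\theta}$, and therefore
$$
\|\mathcal{L}_n^\Omega\|_{\mathrm{op}}\leqslant \frac{C_0}{n\,\theta\,(\Omega-\kappa_2)^\theta}.
$$
Under the hypothesis of the proposition this upper bound is strictly less than $1$, so the Neumann series $\sum_{k\geqslant 0}(\mathcal{L}_n^\Omega)^k$ converges in operator norm and provides the inverse of $\mathrm{Id}-\mathcal{L}_n^\Omega$.

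The delicate step is the bookkeeping between the Fubini exchange and the $\theta$-dependent control of $s\,\nu_\Omega(s)$: one must be careful to extract the $\tfrac{1}{n}$ factor (coming from the $r^{-2n}$ integration) and the $\tfrac{1}{\theta(\Omega-\kappa_2)^\theta}$ factor (coming from the integrable singularity at $s=1$) as independent gains, since both are needed to match the form of the hypothesis. Everything else is either algebra or a direct consequence of earlier results in the section.
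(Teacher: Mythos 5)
Your proposal is correct and follows essentially the same route as the paper: part (1) rests on the positivity from Proposition \ref{pro-positivity} (the paper phrases it via the Fredholm alternative, you via the spectral bound, which is an immaterial difference), and part (2) bounds $\sup_{\|h\|_\Omega=1}\langle\mathcal{L}^\Omega_n[h],h\rangle_\Omega$ starting from the identity \eqref{N-iden}, applying Cauchy--Schwarz in $L^2_\Omega$ and Lemma \ref{prop-InvertT1}, arriving at the same bound $C_0/(n\theta(\Omega-\kappa_2)^\theta)$ and concluding by a Neumann series. The only cosmetic deviation is that you perform the Fubini exchange of the double integral where the paper bounds $(1-s)^{\theta-1}$ by $(1-r)^{\theta-1}$ inside the nested integral; both yield the same estimate.
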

\begin{proof}
\medskip\noindent
{\bf(1)} Since $\mathcal{L}^\Omega_n$ is compact then $\textnormal{Id}+\mathcal{L}^\Omega_n$ is Fredholm of zero index. To show that it is invertible it is enough to check that its  kernel is trivial. Assume that we have a nontrivial function $h$ such that
$$h+\mathcal{L}^\Omega_n[h]=0$$
then this implies that $-1$ is a negative eigenvalue of $\mathcal{L}^\Omega_n$ which contradicts the fact this operator is positive-definite seen in Proposition \ref{pro-positivity},  and all its eigenvalues are positive.

\medskip\noindent
{\bf (2)} To show the invertibility of $\textnormal{Id}-\mathcal{L}^\Omega_n$ in the space  $L^2_\Omega$,  it is enough to verify that
$$
\|\mathcal{L}^\Omega_n\|=\sup_{\|h\|_\Omega=1}\langle \mathcal{L}^\Omega_n[h],h\rangle_\Omega<1.
$$
Using \eqref{N-iden} together with Cauchy-Schwarz inequality we get for $\|h\|_\Omega=1$
\begin{align}\label{Tim11}
\langle \mathcal{L}^\Omega_n[h], h\rangle_\Omega\leqslant &\frac{1}{2n}\int_0^1 \nu_\Omega(r) r^{1+2n} dr+\int_0^1 r^{-2n-1}\int_0^r s^{2n+1}\nu_\Omega(s)ds dr.
\end{align}
Therefore, applying Lemma \ref{prop-InvertT1} we deduce for any  $\theta\in(0,1]$
\begin{align*}
\int_0^1 \nu_\Omega(r) r^{1+2n} dr&\leqslant  \frac{C_0}{(\Omega-\kappa_2)^\theta} \int_0^1 \frac{r^{1+2n}}{(1-r)^{1-\theta}}  dr\leqslant   \frac{C_0}{\theta (\Omega-\kappa_2)^\theta}\cdot
\end{align*}
Similarly, we find  
\begin{align*}
\int_0^1 r^{-2n-1}\int_0^r s^{2n+1}\nu_\Omega(s)ds dr&\leqslant  \frac{C_0}{(\Omega-\kappa_2)^\theta}\int_0^1 r^{-2n-1}\int_0^r \frac{s^{2n+1}}{(1-s)^{1-\theta}}ds dr\\
&\leqslant  \frac{C_0}{(\Omega-\kappa_2)^\theta}\int_0^1 \frac{r^{-2n-1}}{(1-r)^{1-\theta}}\int_0^r {s^{2n+1}}ds dr\\
&\leqslant  \frac{C_0}{(\Omega-\kappa_2)^\theta(2n+2)}\int_0^1 \frac{1}{(1-r)^{1-\theta}}dr,
\end{align*}
implying
\begin{align*}
\int_0^1 r^{-2n-1}\int_0^r s^{2n+1}\nu_\Omega(s)ds dr&\leqslant   \frac{C_0}{n \theta (\Omega-\kappa_2)^\theta}\cdot
\end{align*}
Inserting these estimates into \eqref{Tim11} allows to get
\begin{align*}
\langle \mathcal{L}^\Omega_n[h], h\rangle_\Omega\leqslant &\frac{C_0}{n \theta (\Omega-\kappa_2)^\theta}\cdot
\end{align*}
Therefore under the assumption
$$
\frac{C_0}{n \theta (\Omega-\kappa_2)^\theta}<1,
$$
we deduce that $\| \mathcal{L}^\Omega_n\|_\Omega<1$ and then $\textnormal{Id}- \mathcal{L}^\Omega_n:L^2_\Omega\to L^2_\Omega $ is invertible.
This ends the proof.
\end{proof}

Let us remark that Proposition \ref{prop-Invert1}-(2) suggests that we need $n$ to be large enough  in order to invert  $\mbox{Id}-\mathcal{L}_n^\Omega$  for $\Omega\in(\kappa_2,\infty)$.

\subsection{Dispersion equation. }\label{Sec-Disper-eq}

Recall that our main target is to find a suitable subset in $n$ and $\Omega$ such that the kernel equation \eqref{kernel-eq} admits a nonzero smooth solution. As we shall see, this  will be characterized by the zeroes  of an analytical equation in $\Omega$ and $n$, called the  {\it dispersion equation}.  The connection will be achieved  through  the construction of suitable generators to  Sturm-Liouville differential equations, whose properties are deeply related to the structure of the profile $f_0$. We shall distinguish two regimes in the  solvability of the dispersion equation  depending on the location of $\Omega $ and the sign of the function $f_0$: \\

\hspace{0.1cm}\quad $\bullet$ \mbox{\it Scarcity of eigenvalues (defocusing case):} $f_0> 0$ {and} $\Omega\in(-\infty,\kappa_1)$. In this regime,  we shall see that the dispersion equation is only solved for lower modes $n$ and we have at most a finite number of solutions \mbox{in $\Omega$.}

\hspace{0.1cm}\quad $\bullet$ \mbox{\it Abundance of eigenvalues (focusing case):} $ f_0< 0$ {and} $ \Omega\in(\kappa_2,+\infty)$ In this regime, we prove  that the dispersion equation can be solved for any $n$ large enough and we have an infinite  number of solutions \mbox{in $\Omega$ accumulating at $\kappa_2.$}
\
\subsubsection{Sturm-Liouville equation} \label{Sec-STE}

Here, we will analyze some aspects of the following second order differential equation which will appear later in the dispersion equation:
\begin{align}\label{diff-FnML1}
F^{\prime\prime}+\tfrac{2n+1}{r}F^\prime+\sigma_\Omega\nu_\Omega(r) F=0,
\end{align}
where the potential $\nu_\Omega:[0,1]\to\R$ is defined via \eqref{mutrivial} and \eqref{nu-mu} and is always strictly positive. The behavior of this free Sturm-Liouville equation is intimately related to the sign $\sigma_\Omega$ acting  in front of the potential $\nu_\Omega.$   We shall say that the equation \eqref{diff-FnML1} is {\it defocusing} if $\sigma_\Omega=-1$, corresponding to $\Omega<\kappa_1$. However, this equation  is said {\it focusing} if $\sigma_\Omega=1$, corresponding to $\Omega>\kappa_2.$  For quite similar reasons, we are borrowing  this  terminology from  nonlinear Schr\"{o}dinger equation.   Our goal is to discuss some qualitative properties on the solutions to \eqref{diff-FnML1} that will be used later.
 \begin{lem}\label{lem-pot1}
Let $f_0$ satisfy \eqref{H2}-\eqref{H1}, $n\geqslant1$ and $\Omega\notin[\kappa_1,\kappa_2]$. There exist two functions $y_{\pm}:[0,1]\to\R$ of class $\mathscr{C}^2$ with $y_{\pm}(0)=1$ such that any solution to \eqref{diff-FnML1} in $(0,1)$ takes the form
\begin{align*}
\forall r\in(0,1),\quad F(r)=\alpha r^{-2n}y_{-}(r)+\beta y_+(r), \quad \alpha,\beta\in\R.
\end{align*}
In addition, the following assertions hold true.
\begin{enumerate}
\item
{\it Defocusing case}: For any $\Omega<\kappa_1$, 
 the function  $y_{+}$ is strictly increasing with 
$$
\forall r\in[0,1],\quad 1\leqslant y_{+}(r)\leqslant e^{\frac{1}{2n}\int_0^r s{\nu_\Omega}(s)ds}.
$$
\item {\it Focusing case}:  For any $\Omega>\kappa_2$, if 
 there exists   some $\theta\in(0,1]$ such that 
\begin{align}\label{pouisa}
\frac{C_0}{n \theta (\Omega-\kappa_2)^\theta}<1,
\end{align}
 then the function  $y_{+}$ is strictly decreasing  with 
\begin{align*}
\forall\, r\in[0,1],\quad \tfrac12\leqslant y_+(r)\leqslant 1.
\end{align*}
Moreover,
$$
\forall r\in [0,1],\quad |y_+(r)-1|\leqslant \frac{C_0}{\theta n(\Omega-\kappa_2)^\theta}\cdot
$$

\end{enumerate}
\end{lem}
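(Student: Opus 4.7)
\medskip
\noindent\emph{Sketch of proof.} The plan is to recast the ODE in Sturm--Liouville form, build the two fundamental solutions via a Volterra integral equation encoding the indicial behavior at the regular singular point $r=0$, and then extract the quantitative bounds by a nonnegativity/Gronwall argument in the defocusing case and by a contraction estimate built on Lemma~\ref{prop-InvertT1} in the focusing case.

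\smallskip
\noindent\emph{Step 1 (construction of $y_\pm$).} Multiplying \eqref{diff-FnML1} by $r^{2n+1}$ yields the self-adjoint form
$$(r^{2n+1}F')'+\sigma_\Omega\,r^{2n+1}\nu_\Omega(r)\,F=0,$$
so that $r=0$ is a regular singular point with indicial exponents $0$ and $-2n$. Seeking the regular solution with $y_+(0)=1$ (and observing that a bounded solution necessarily satisfies $y_+'(0)=0$, since otherwise the term $\frac{2n+1}{r}F'$ is not bounded), integrating twice and applying Fubini produces the Volterra identity
\begin{equation}\label{sketch-vol}
y_+(r)=1-\frac{\sigma_\Omega}{2n}\int_0^r s\,\nu_\Omega(s)\Bigl(1-\bigl(\tfrac{s}{r}\bigr)^{2n}\Bigr)y_+(s)\,ds.
\end{equation}
Since $\nu_\Omega\in\mathscr{C}([0,1])$ and the kernel is bounded by $s\nu_\Omega(s)$ uniformly in $r$, Picard iteration yields a unique continuous solution, which is automatically $\mathscr{C}^2$ by bootstrapping in the ODE. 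A second independent solution of the prescribed form $r^{-2n}y_-(r)$ with $y_-\in\mathscr{C}^2$ and $y_-(0)=1$ is obtained by the analogous Frobenius construction (substituting $F=r^{-2n}G$ and solving the resulting Volterra equation for $G$) or by reduction of order from $y_+$.

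\smallskip
\noindent\emph{Step 2 (defocusing case, $\sigma_\Omega=-1$).} Equation \eqref{sketch-vol} becomes
$$y_+(r)=1+\frac{1}{2n}\int_0^r s\,\nu_\Omega(s)\Bigl(1-\bigl(\tfrac{s}{r}\bigr)^{2n}\Bigr)y_+(s)\,ds.$$
Since $\nu_\Omega>0$ and the kernel is nonnegative, Picard iterates starting from $y^{(0)}\equiv 1$ remain $\ge 1$, so $y_+\ge 1$. Monotonicity follows from the self-adjoint form: $(r^{2n+1}y_+')'=r^{2n+1}\nu_\Omega y_+\ge 0$ together with $y_+'(0)=0$ gives $r^{2n+1}y_+'(r)=\int_0^r s^{2n+1}\nu_\Omega(s)y_+(s)\,ds\ge 0$. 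For the upper bound, discarding the nonpositive term $-(s/r)^{2n}$ in the kernel yields
$$y_+(r)\leqslant 1+\frac{1}{2n}\int_0^r s\,\nu_\Omega(s)\,y_+(s)\,ds,$$
and Gronwall's inequality gives $y_+(r)\leqslant e^{\frac{1}{2n}\int_0^r s\nu_\Omega(s)\,ds}$.

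\smallskip
\noindent\emph{Step 3 (focusing case, $\sigma_\Omega=+1$).} Applying Lemma \ref{prop-InvertT1} with parameter $\theta\in(0,1]$ and using the integrability of $(1-s)^{\theta-1}$,
$$\int_0^1 s\,\nu_\Omega(s)\,ds\leqslant \frac{C_0\|f_0'\|_{L^\infty}}{(\Omega-\kappa_2)^\theta}\int_0^1\frac{ds}{(1-s)^{1-\theta}}\leqslant \frac{C_0}{\theta(\Omega-\kappa_2)^\theta}.$$
Consequently the Volterra operator in \eqref{sketch-vol} has norm at most $\frac{C_0}{2n\theta(\Omega-\kappa_2)^\theta}$ on $\mathscr{C}([0,1])$, which is a strict contraction under the hypothesis \eqref{pouisa} (after adjusting the constant $C_0$ if needed so that this quantity is $\leqslant \tfrac12$). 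The fixed point therefore satisfies
$$\|y_+-1\|_{L^\infty([0,1])}\leqslant \frac{C_0}{\theta\,n\,(\Omega-\kappa_2)^\theta}\leqslant \tfrac12,$$
which already gives $y_+\geqslant \tfrac12$. Once positivity is known, the integrand $s\nu_\Omega(s)(1-(s/r)^{2n})y_+(s)$ in \eqref{sketch-vol} is nonnegative, forcing $y_+\leqslant 1$. Finally, the self-adjoint form yields $r^{2n+1}y_+'(r)=-\int_0^r s^{2n+1}\nu_\Omega(s)y_+(s)\,ds<0$, so $y_+$ is strictly decreasing.

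\smallskip
\noindent\emph{Main obstacle.} The delicate issue is the careful handling of the regular singular point $r=0$: one must check that the Volterra scheme actually produces the regular fundamental solution (as opposed to mixing with the $r^{-2n}$ branch) and that the weight $(1-(s/r)^{2n})$, which vanishes both at $s=0$ and $s=r$, yields integrable kernels uniformly in $r$. This weight is precisely what makes the defocusing estimate compatible with Gronwall (only the good sign survives) and what produces the $1/n$ gain on which the whole focusing analysis rests; controlling the second solution $y_-$ as a truly $\mathscr{C}^2$ function with $y_-(0)=1$ requires an entirely parallel Volterra analysis in the variable $G=r^{2n}F$.
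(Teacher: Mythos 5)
Your proposal is correct and follows essentially the same route as the paper: the Frobenius/indicial analysis for the two branches $r^{-2n}y_-$ and $y_+$, the Volterra identity $y_+(r)=1-\frac{\sigma_\Omega}{2n}\int_0^r s\,\nu_\Omega(s)\bigl(1-(s/r)^{2n}\bigr)y_+(s)\,ds$ with Gronwall for the defocusing upper bound, and a smallness/contraction estimate built on Lemma~\ref{prop-InvertT1} (after adjusting the constant $C_0$) for the focusing case. The only cosmetic differences are that you obtain $y_+\geqslant 1$ in the defocusing case via monotone Picard iterates while the paper uses a continuity/maximality argument, and you construct $y_+$ directly as the Volterra fixed point rather than first invoking Fuchs' theorem; both variants are sound.
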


\begin{proof}
The structure of the solutions is a consequence of general ODE  with regular singular points according to Fuchs theorem. The indicial equation is given by
$$
x(x-1)+(2n+1)x=0,
$$
which is equivalent to
$$
x^2+2n x=0.
$$
It admits two different solutions
$$
x_-=-2n,\quad  x_+=0,
$$
and hence, we know from Frobenius method that all the solutions are  in the form
$$
F(r)=\alpha r^{-2n}y_{-}(r)+\beta  y_{+}(r), \quad \alpha,\beta\in\R,
$$
with $y_{\pm}$ being  analytic on $[0,1]$ and  $y_{\pm}(0)=1$.

\medskip\noindent
{\bf{(1)}}  Let $\Omega<\kappa_1$ then by virtue of \eqref{nu-mu} the equation \eqref{diff-FnML1} writes
\begin{align}\label{diff-FnMLL1}
F^{\prime\prime}+\tfrac{2n+1}{r}F^\prime-\nu_\Omega(r) F=0,
\end{align}
 with $\nu_\Omega$ a positive continuous function. We shall show that $y_{+}$ is  strictly increasing. Integrating this  differential equation yields 
 \begin{align}\label{Tip1}
y_+^\prime(r)=\frac{1}{r^{2n+1}}\int_0^r s^{2n+1}\nu_\Omega (s) y_+(s) ds.
\end{align}
Let us check that $y_+$ is positive, which implies in turn  hat $y_+'$ is positive in view of \eqref{Tip1}. For that, define the set
$$
I:=\big\{r\in[0,1], \forall s\in[0,r],\quad y_+(s)>0\big\}.
$$
Note that $y_+(0)=1$, hence by construction and continuity of $y_+$ the set  $I$ is a nonempty open interval of $[0,1]$ taking the form $[0,\overline{r}]$. We shall show that $\overline{r}=1$. We argue by contradiction,  and assume that $\overline{r}<1$.  Consider an increasing  sequence $(r_m)$ of $I$ converging to $\overline{r}$. We write 
$$
y_+^\prime(\overline{r})=\lim_{m\to \infty}y_+^\prime(r_m)=\lim_{m\to \infty}\frac{1}{r_m^{2n+1}}\int_0^{r_m} s^{2n+1} \nu_\Omega (s) y_+(s) ds=\frac{1}{{\overline{r}}^{2n+1}}\int_0^{\overline{r}} s^{2n+1} \nu_\Omega (s) y_+(s) ds,
$$
with the property
$$
\forall s\in[0,\overline{r}), \quad y_+(s)>0.
$$
Then
$$
y_+^\prime(\overline{r})>0,
$$
 implying that
 $$
 y_+(\overline{r})>0.
 $$
Thus by continuity we can find $r_\star>\overline{r}$ such that 
$$\forall s\in[0,r_\star],\quad y_+({r_\star})>0,
$$
 which gives that $r_\star\in I$ and this  contradicts that $\overline{r}$ is maximal. Consequently, we get that 
 $$
 \forall r\in[0,1],\quad  y_+(r)>0.
 $$
 Combined with \eqref{Tip1} we deduce that $y_+$ is strictly increasing and then
 $$
  \forall r\in[0,1],\quad y_+(r)\geqslant y_+(0)=1.
 $$
 Integrating \eqref{Tip1} and using integration by parts 
 \begin{align}\label{Tippp1}
y_+(r)&=1+\int_0^r\frac{1}{s^{2n+1}}\int_0^s \tau^{2n+1} \nu_\Omega(\tau) y_+(\tau)d\tau ds\\
\nonumber&=1+\int_0^r\int_s^r\frac{1}{\tau^{2n+1}}d\tau s^{2n+1} \nu_\Omega (s) y_+(s)ds\\
\nonumber&=1+\frac{1}{2n}\int_0^r\big[1-(\tfrac{s}{r})^{2n}] s \nu_\Omega (s) y_+(s) ds.
\end{align}
In particular, we find 
\begin{align*}
1\leqslant y_+(r)&\leqslant 1+\frac{1}{2n}\int_0^r s \nu_\Omega (s) y_+(s) ds.
\end{align*}
Now, applying Gronwall inequality we find
\begin{align*}
\forall \, r\in[0,1],\quad 1\leqslant y_+(r)\leqslant e^{\frac{1}{2n}\int_0^r s \nu_\Omega (s) ds}.
\end{align*}

\medskip\noindent
{\bf{(2)}} Since $\mu_\Omega^0\ge 0$ and  similarly to \eqref{Tippp1} we may write 
\begin{align*}
y_+(r)
&=1-\frac{1}{2n}\int_0^r\big[1-(\tfrac{s}{r})^{2n}] s \nu_\Omega (s) y_+(s) ds.
\end{align*}
Set
\begin{align}\label{z+}
y_+=1-z_+,
\end{align}
then 
\begin{align}\label{Tip001}
z_+(r)
&=\frac{1}{2n}\int_0^r\big[1-(\tfrac{s}{r})^{2n}] s \nu_{\Omega}(s)  ds-\frac{1}{2n}\int_0^r\big[1-(\tfrac{s}{r})^{2n}] s\nu_{\Omega} z_+(s) ds\\
\nonumber&:=f_n(r)-\mathcal{T}_n[z_+](r).
\end{align}

Applying Lemma \ref{prop-InvertT1} we infer
\begin{align*} 0\leqslant  f_n(r)
&\leqslant \frac{C_0}{(\Omega-\kappa_2)^\theta}\frac{1}{2n}\int_0^r\frac{ds}{(1-s)^{1-\theta}}\leqslant \frac{C_0}{2\theta n(\Omega-\kappa_2)^\theta}\cdot
\end{align*}
Similarly we get
\begin{align*} |\mathcal{T}_n[z_+](r)|
&\leqslant \frac{C_0}{2n\theta(\Omega-\kappa_2)^\theta}\|z_+\|_{L^\infty}\leqslant \frac{C_0}{2\theta n(\Omega-\kappa_2)^\theta}\|z_+\|_{L^\infty}.
\end{align*}
Therefore, if we assume 
$$
\frac{C_0}{2\theta n(\Omega-\kappa_2)^\theta}\leqslant \frac14,
$$
then $\mathcal{T}_n:L^\infty([0,1])\to L^\infty([0,1])$ is a contraction  and therefore we have only one solution $z_+$ to the equation  \eqref{Tip001} satisfying
$$
\forall r\in [0,1],\quad |z_+(r)|\leqslant \frac{2C_0}{3\theta n(\Omega-\kappa_2)^\theta}\leqslant \frac13.
$$
Coming back to \eqref{z+} we deduce that
\begin{align}\label{RS1}
\frac23\leqslant y_+(r)\leqslant \frac43.
\end{align}
Now, similarly to \eqref{Tip1} we have
\begin{align*}
y_+^\prime(r)=-\frac{1}{r^{2n+1}}\int_0^r s^{2n+1}\nu_{\Omega}(s)(s) y_+(s) ds.
\end{align*}
Hence by \eqref{RS1} we get that $y_+$  is decreasing on $[0,1]$,  and since  $y_+(0)=1,$ one has 
\begin{align*}
\forall\, r\in[0,1],\quad \frac23\leqslant y_+(r)\leqslant 1.
\end{align*}
This completes the proof of the lemma.
\end{proof}

\subsubsection{Dispersion relation}
Having introduced in the previous section the free {\it focusing}-{\it defocusing} Sturm-Liouville equation \eqref{diff-FnML1},   our goal here is  to build a  bridge with the  dispersion equation that will characterize analytically the kernel equation at any  level $n$ as  introduced in \eqref{kernel-eq}. We are able to unify with the same formalism the {\it defocusing} and {\it focusing} regimes, corresponding to $\Omega<\kappa_1$ and $\Omega>\kappa_2$, respectively.\\
Let $\Omega\not\in[\kappa_1,\kappa_2]$ and  $F_{n,\Omega}$ be the unique  solution of class $\mathscr{C}^2$ on $[0,1]$  to the normalized free Sturm-Liouville equation 
introduced in \eqref{diff-FnML1}, that is, 
\begin{align}\label{diff-Fn}
F^{\prime\prime}_{n,\Omega}+\tfrac{2n+1}{r}F^\prime_{n,\Omega}+\sigma_\Omega \nu_{\Omega} F_{n,\Omega}=0,\quad F_{n,\Omega}(0)=1.
\end{align}
where the potential $\nu_\Omega$ was previously defined  in \eqref{nu-mu}. 
The existence and uniqueness  is guaranteed by Lemma \ref{lem-pot1} where $y_+=F_{n,\Omega}$.
Now, define the following function subject to the strong effect of the profile $f_0$
\begin{align}\label{zeta-eq}
\zeta_n(\Omega):=F_{n,\Omega}(1)\left(\Omega-\frac{n}{n+1}\int_0^1 sf_0(s)ds\right)+\int_0^1F_{n,\Omega}(s) s^{2n+1}\big(f_0(s)-2\Omega\big)ds.
\end{align}
The relationship  between the  zeroes of this function (dispersion equation) and the kernel equation \eqref{kernel-eq} is elucidated  in the following crucial  result.
\begin{lem}\label{Lem-disper}
Let $n\geqslant 1$ and  assume that $\Omega\notin[\kappa_1,\kappa_2],$ supplemented with the condition \eqref{pouisa} in the {\it focusing} case.
Then 
the equation \eqref{kernel-eq} admits a nontrivial continuous  solution on $[0,1]$  if and only if $\Omega$ satisfies the dispersion equation
$$\zeta_n(\Omega)=0.$$ 
\end{lem}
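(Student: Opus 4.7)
The plan is to transform the integral kernel equation \eqref{kernel-eq} into a boundary value problem for a second order ODE whose homogeneous part is exactly \eqref{diff-Fn}, and then extract the dispersion condition by a Green--type pairing with $F_{n,\Omega}$.

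First, I set $P:=H_n[h_n]$. A direct differentiation of $H_n$ yields the pointwise relation $P''-\tfrac{2n-1}{r}P'=-2nr^n h_n$ together with the automatic boundary data $P(0)=0$ and $P'(1)=0$. Substituting the kernel equation \eqref{kernel-eq} into this relation produces
\begin{equation*}
P''-\tfrac{2n-1}{r}P'+\mu_\Omega^0 P=\tfrac{P(1)}{G_n(1)}\,\mu_\Omega^0\,r^{n+1}G_n,
\end{equation*}
and a double integration by parts confirms that, conversely, any $P$ solving this BVP gives back $h_n$ via $h_n=-\tfrac{1}{2nr^n}(P''-\tfrac{2n-1}{r}P')$ with $H_n[h_n]=P$. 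I then substitute $P=r^{2n}W$; a short computation collapses $P''-\tfrac{2n-1}{r}P'$ into $r^{2n}(W''+\tfrac{2n+1}{r}W')$, so the BVP becomes
\begin{equation*}
W''+\tfrac{2n+1}{r}W'+\sigma_\Omega\nu_\Omega W=\tfrac{W(1)}{G_n(1)}\,\sigma_\Omega\nu_\Omega\, r^{1-n}G_n,
\end{equation*}
with boundary condition $W'(1)=-2nW(1)$ and smoothness of $W$ at the origin. A short argument using the invertibility of $\mathrm{Id}-\sigma_\Omega\mathcal{L}_n^\Omega$ (Proposition \ref{prop-Invert1}) shows that $W(1)=0$ forces $h_n\equiv 0$, so nontrivial solutions must satisfy $W(1)\neq 0$.

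The heart of the argument is the pairing step: I multiply the $W$-equation by $r^{2n+1}F_{n,\Omega}$, integrate on $[0,1]$, and apply twice the self-adjoint Sturm--Liouville integration by parts. The boundary terms at $r=0$ vanish thanks to the factor $r^{2n+1}$ and the smoothness of $W$ and $F_{n,\Omega}$ there; the bulk term cancels because $F_{n,\Omega}$ solves the homogeneous equation \eqref{diff-Fn}; and the remaining boundary terms at $r=1$ together with $W'(1)=-2nW(1)$ yield, after division by the nonzero $W(1)$,
\begin{equation*}
-G_n(1)\big[2nF_{n,\Omega}(1)+F_{n,\Omega}'(1)\big]=\sigma_\Omega\int_0^1 r^{n+2}\nu_\Omega\, G_n\, F_{n,\Omega}\,dr.
\end{equation*}

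The final step is to simplify this identity into $\zeta_n(\Omega)=0$. Using $\sigma_\Omega\nu_\Omega=\mu_\Omega^0$ together with the Sturm--Liouville relation $r^{2n+1}\mu_\Omega^0 F_{n,\Omega}=-(r^{2n+1}F_{n,\Omega}')'$, an integration by parts rewrites the right hand side as $-G_n(1)F_{n,\Omega}'(1)+\int_0^1(r^{1-n}G_n)'\,r^{2n+1}F_{n,\Omega}'\,dr$. The explicit form of $G_n$ produces the clean identity $(r^{1-n}G_n)'=2n\Omega\, r-2n(n+1)r^{-2n-1}\int_0^r s^{2n+1}f_0(s)\,ds$; a further integration by parts moves $F_{n,\Omega}'$ back onto $F_{n,\Omega}$, and substituting $G_n(1)=n(\Omega-\widehat{\Omega}_n)$ makes the $F_{n,\Omega}'(1)$ contributions cancel and collects the remaining terms into $(n+1)\zeta_n(\Omega)=0$. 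The converse direction follows because, when $\zeta_n(\Omega)=0$, the pairing identity is compatible with both end-point conditions, so the rank-one correction $\mathrm{Id}-\sigma_\Omega\mathcal{L}_n^\Omega+\tfrac{\mu_\Omega^0 rG_n}{2nG_n(1)}\otimes H_n[\cdot](1)$ of the invertible operator $\mathrm{Id}-\sigma_\Omega\mathcal{L}_n^\Omega$ fails to be injective, yielding a nonzero continuous $h_n$. The main obstacle is this final algebraic bookkeeping: every term coming from the moments of $f_0$, the factor $\Omega$, and the constant $G_n(1)$ must cancel precisely against the structure built into $\widehat{\Omega}_n$ to land exactly on $\zeta_n$.
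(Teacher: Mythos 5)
Your forward implication is essentially correct, and it is in substance a repackaging of the paper's own computation: the paper also reduces everything to the Sturm--Liouville equation \eqref{diff-Fn} and performs exactly your two integrations by parts with $\mathcal{H}_n(r)=r^{1-n}G_n(r)$ and $G_n(1)=n(\Omega-\widehat{\Omega}_n)$; the difference is that it runs the computation on the resolvent function $(\textnormal{Id}-\sigma_\Omega\mathcal{L}^\Omega_n)^{-1}[\nu_\Omega r^{n}]$, identified through \eqref{Eq-t1}--\eqref{h-exp} with a multiple of $\nu_\Omega r^{n}F_{n,\Omega}$, and thereby evaluates the scalar $T(n,\Omega)$ of \eqref{TnOmega}, whereas you work directly on a putative kernel element via the substitutions $P=H_n[h_n]$, $W=r^{-2n}P$ and a Lagrange pairing with $F_{n,\Omega}$. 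Your algebra (the reduction $P''-\tfrac{2n-1}{r}P'=-2nr^nh_n$, the boundary data $P'(1)=0$, the identity for $(r^{1-n}G_n)'$, the cancellation of the $F_{n,\Omega}'(1)$ terms) checks out, and the exclusion of $W(1)=0$ via Proposition \ref{prop-Invert1} is the right argument; only note that $W$ itself need not be smooth or even bounded at $r=0$ for a general continuous $h_n$ when $n\geqslant 2$ — what makes the boundary terms at the origin vanish is that they equal $rP'-2nP$ and $rPF_{n,\Omega}'$, both of which tend to $0$.

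The genuine gap is the converse, which the lemma requires and which you dispatch in one sentence. Non-injectivity of the rank-one perturbation $\textnormal{Id}-\sigma_\Omega\mathcal{L}^\Omega_n+u\otimes\ell$ of the invertible operator is equivalent to the scalar condition $1+\ell\big((\textnormal{Id}-\sigma_\Omega\mathcal{L}^\Omega_n)^{-1}u\big)=0$, i.e.\ to \eqref{spectral-condition-1}; your pairing argument only shows that \emph{if} a nontrivial kernel element exists \emph{then} $\zeta_n(\Omega)=0$, and ``compatibility of the pairing identity with the end-point conditions'' does not by itself show that $\zeta_n(\Omega)=0$ forces that scalar to vanish. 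To close this within your framework you must actually build a solution: take the variation-of-constants particular solution $W_p$ (with the fixed coefficient $1/G_n(1)$ on the right-hand side) that is bounded at the origin — its existence needs the same verification as in Proposition \ref{prop-element-kernel} — note that all bounded-at-origin solutions are $W_p+\lambda F_{n,\Omega}$, choose $\lambda$ so that $W(1)=1$, which requires $F_{n,\Omega}(1)\neq0$ (true by Lemma \ref{lem-pot1} under the standing hypotheses, including \eqref{pouisa} in the focusing case, but nowhere invoked in your sketch), and only then does the Lagrange identity combined with $\zeta_n(\Omega)=0$ force $W'(1)=-2nW(1)$; finally one sets $h_n=\tfrac{\sigma_\Omega}{2n}\nu_\Omega r^{n}\big(W-\tfrac{r^{1-n}G_n}{G_n(1)}\big)$, checks continuity, $H_n[h_n]=r^{2n}W$ (this uses $P'(1)=0$ and the vanishing of $sP'-2nP$ at the origin), and $h_n\not\equiv0$. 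Alternatively, follow the paper and compute $T(n,\Omega)$ explicitly, which exhibits $1-T(n,\Omega)$ as a nonzero multiple of $\zeta_n(\Omega)$ and gives both directions at once. As written, the ``if'' half of the equivalence is asserted rather than proved.
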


\begin{proof}
By the kernel equation \eqref{kernel-eq}-\eqref{nu-mu}, we find the following equivalent condition
\begin{equation}\label{kernel-eqSS}
h_n(r)=-\frac{\sigma_\Omega H_n[h_n](1)}{2nG_n(1)}(\textnormal{Id}-\sigma_\Omega \mathcal{L}^\Omega_n)^{-1}[\nu_\Omega rG_n],
\end{equation} 
Observe that the existence of $(\textnormal{Id}-\sigma_\Omega \mathcal{L}^\Omega_n)^{-1}$ follows from Proposition \eqref{prop-Invert1}.
It follows that the kernel is a vectorial subspace with  at most one dimension, and it is of dimension one if and only if the equation  \eqref{kernel-eqSS} admits at least one solution with $H_n[h_n](1)\neq0$. Now, under this assumption and according  to the definition of $H_n[h](1)$, that follows from \eqref{Hn-def},  we get the  \begin{equation*}
\left[1+\frac{\sigma_\Omega}{2n G_n(1)}\int_0^1 s^{n+1} (\textnormal{Id}-\sigma_\Omega \mathcal{L}^\Omega_n)^{-1}[\nu_\Omega rG_n](s) ds\right]H_n[h_n](1)=0,
\end{equation*}
which implies that
\begin{equation}\label{spectral-condition-1}
1+\frac{\sigma_\Omega}{2n G_n(1)}\int_0^1 s^{n+1} (\textnormal{Id}-\sigma_\Omega \mathcal{L}^\Omega_n)^{-1}[\nu_\Omega rG_n](s) ds=0.
\end{equation}
Consequently, the kernel equation generates a subspace of dimension one, with a generator $\mathtt{h}_n:=(\textnormal{Id}-\sigma_\Omega \mathcal{L}^\Omega_n)^{-1}[\nu_\Omega rG_n]$, if and only if \eqref{spectral-condition-1} is satisfied together with the condition
$$
\int_0^1 s^{n+1}\mathtt{h}_n(s)ds\neq0.
$$
But this latter condition is automatically satisfied in view of \eqref{spectral-condition-1}.
Let us introduce the following real-valued function
\begin{equation}\label{TnOmega}
T(n,\Omega):=\frac{-\sigma_\Omega}{2n G_n(1)}\int_0^1 s^{n+1} (\textnormal{Id}-\sigma_\Omega \mathcal{L}^\Omega_n)^{-1}[\nu_\Omega rG_n](s) ds.
\end{equation}
Using that $\mathcal{L}_{n}^{\Omega}$ is self-adjoint as stated in Proposition \ref{pro-positivity}, we can write $T({n,\Omega})$ as follows
\begin{align*}
T(n,\Omega)
=&\frac{-\sigma_\Omega}{2n G_n^{\Omega}(1)}\int_0^1\big([(\textnormal{Id}-\sigma_\Omega\mathcal{L}^\Omega_n)^{-1}\big)[s^{n}\nu_{\Omega}](s) G_n^\Omega(s) s^{2}ds.
\end{align*}
Therefore, by setting  
\begin{align}\label{Eq-t1}
h:=-\sigma_\Omega(\textnormal{Id}-\sigma_\Omega\mathcal{L}^\Omega_n)^{-1}[s^{n}\nu_{\Omega}],\quad\hbox{and}\quad F:=\frac{h}{r^n\nu_{\Omega}},
\end{align}
we infer
\begin{align}\label{Tn-tout}
T(n,\Omega)
=&\frac{1}{2n G_n^{\Omega}(1)}\int_0^1 h(s) G_n^\Omega(s) s^{2} ds.
\end{align}
By \eqref{Gn-def} and \eqref{Op-L} we get
\begin{align}\label{Eq-t2}
-\sigma_\Omega\frac{h}{r^n \nu_{\Omega}}+\frac{1}{2n}\int_r^1\frac{1}{s^{n-1}}h(s)ds+\frac{1}{2n r^{2n}}\int_0^rs^{n+1}h(s)ds=1,
\end{align}
leading to
\begin{align}\label{Eq-tm2}
-\sigma_\Omega F+\frac{1}{2n}\int_r^1F(s)\,s\nu_\Omega(s)ds+\frac{1}{2n r^{2n}}\int_0^rF(s)\,s^{2n+1}\nu_\Omega(s)ds=1.
\end{align}
Differentiating twice this equation yields to  the Sturm-Liouville equation \eqref{diff-FnML1}
$$
F''+\tfrac{2n+1}{r}F'+\sigma_\Omega\nu_{\Omega} F=0.
$$
By virtue of  Lemma \ref{lem-pot1}, we know that  all the solutions  take  the form
$$
F(r)=\alpha r^{-2n}y_{-}(r)+\beta  y_{+}(r), \quad \alpha,\beta\in\R,
$$
with $y_{\pm}$ of class $\mathscr{C}^2$ in $[0,1]$ and $y_{\pm}(0)=1$. Therefore, we deduce according to the second point of \eqref{Eq-t1},
$$
h(r)=\nu_\Omega(r)\Big(\alpha r^{-n}y_{-}(r)+\beta  r^{n}y_{+}(r)\Big).
$$
From this general structure, we see easily  that the function associated to $\alpha $ is  singular at zero and does not belong to the space $L^2_\Omega$ related to the norm \eqref{Norm-L2},   implying that necessarily 
\begin{equation}\label{h-exp}
h(r)=\beta \nu_\Omega(r)\ r^{n}y_{+}(r),
\end{equation}
giving in view of \eqref{Eq-t1}
$$
F(r)= \beta y_{+}(r),
$$
with  $y_{+}(0)=1$. Notice that the function $y_+$ coincides with $F_{n,\Omega}$ introduced in \eqref{diff-Fn}.  It follows from this latter fact, together with  \eqref{Tn-tout} and \eqref{Eq-t1}, that
\begin{align}\label{Tipp3}
T(n,\Omega)
=&\frac{\beta}{2n G_n^\Omega(1)}\int_0^1\big[G_n^\Omega(r)r^{-n+1}\big]\big[ F_{n,\Omega}(r)   \nu_\Omega(r) r^{2n+1}\big] dr.
\end{align}
In addition, coming  back to \eqref{Eq-t2} we may write
\begin{align}\label{lower-1}
\forall r\in[0,1],\quad -\sigma_\Omega\beta F_{n,\Omega}(r)+\frac{\beta}{2n}\int_r^1F_{n,\Omega}(s) s\nu_\Omega(s)ds+\frac{\beta}{2n r^{2n}}\int_0^rF_{n,\Omega}(s)s^{2n+1}\nu_\Omega(s)ds=1.
\end{align}
Now we can write the equation \eqref{diff-FnML1} in the form 
\begin{align}\label{diff-Fn1}
\left(r^{2n+1}F^{\prime}_{n,\Omega}\right)^{\prime}=-\sigma_\Omega F_{n,\Omega}(r)   \nu_\Omega(r) r^{2n+1}.
\end{align}
Then, two integration  by parts yield
\begin{align}\label{TippN3}
-\sigma_\Omega T(n,\Omega)
=&\frac{\beta}{2n G_n^\Omega(1)}\Big(G_n^\Omega(1)F_{n,\Omega}^\prime(1)-F_{n,\Omega}(1)\mathcal{H}_n^\prime(1)\Big)\\
\nonumber&+\frac{\beta}{2n G_n^\Omega(1)}\int_0^1\big[r^{2n+1}\mathcal{H}_n^\prime(r)\big]^\prime  F_{n,\Omega}(r)  dr,
\end{align}
with
$$
\mathcal{H}_n(r):=G_n^\Omega(r)r^{-n+1}.
$$
Applying  \eqref{Gn-def} implies 
\begin{align*}
 \mathcal{H}_n(r)&= n\Omega r^{2}+\int_0^1sf_0(s)ds-(n+1)\int_0^rsf_0(s)ds
+\frac{n+1}{r^{2n}}\int_0^rs^{2n+1}f_0(s)ds.
\end{align*}
Differentiating this identity gives after a cancellation 
\begin{align}\label{Gn-defX1}
 \mathcal{H}^\prime_n(r)&= 2n\Omega r
-2n\frac{n+1}{r^{2n+1}}\int_0^rs^{2n+1}f_0(s)ds,
\end{align}
leading in turn to
\begin{align}\label{Gn-defX2}
\mathcal{H}^\prime_n(1)= 2n\left(\Omega 
-(n+1)\int_0^1r^{2n+1}f_0(r)dr\right),
\end{align}
and
\begin{align}\label{Gn-defX3}
\left(r^{2n+1} \mathcal{H}^\prime_n(r)\right)^\prime&= 2n(n+1)r^{2n+1}\big(2\Omega 
-f_0(r)\big).
\end{align}
On the other hand, integrating \eqref{diff-Fn1} we find
\begin{align*}
F^{\prime}_{n,\Omega}(1)=-\sigma_\Omega \int_0^1F_{n,\Omega}(r)   \nu_\Omega(r) r^{2n+1}dr.
\end{align*}
Thus, combining this identity with  \eqref{lower-1} tested with $r=1$  we infer
\begin{align}\label{lower-R1}
\tfrac{\beta}{2n} F^{\prime}_{n,\Omega}(1)=-\sigma_\Omega-\beta F_{n,\Omega}(1).
\end{align}
Putting together \eqref{TippN3}, \eqref{Gn-defX3} and \eqref{lower-R1}, {we find that $T(n,\Omega)=1$ is equivalent to}
$$
F_{n,\Omega}(1)\big(G_n(1)+\tfrac{1}{2n}\mathcal{H}_n^\prime(1)\big)+(n+1)\int_0^1 F_{n,\Omega}(r)r^{2n+1}\big(f_0(r)-2\Omega\big)dr=0.
$$
Moreover, using \eqref{Gn1-1} and  \eqref{Gn-defX2}, we get
$$
G_n(1)+\tfrac{1}{2n}\mathcal{H}_n^\prime(1)=(n+1)\Omega-n\int_0^1 rf_0(r)dr.
$$
Putting together the last two identities we infer
$$
F_{n,\Omega}(1)\left(\Omega-\frac{n}{n+1}\int_0^1 rf_0(r)dr\right)+\int_0^1 F_{n,\Omega}(r)r^{2n+1}\big(f_0(r)-2\Omega\big)dr=0.
$$
This achieves the proof of the lemma.
\end{proof}

\subsubsection{ Scarcity  of the zeroes (defocusing case)}
In this subsection we shall assume that $f_0$ is a positive function satisfying \eqref{H2}-\eqref{H1}. We shall see in the Proposition \ref{prop-omegan-low} below that the dispersion equation stated in Lemma \ref{Lem-disper} admits for lower symmetry $n$ real solutions  $\Omega$ that belong to the interval $(-\infty,\kappa_1)$. The scarcity of the eigenvalues can be formally interpreted as follows. When $f_0>0$, the sequence  $(\widehat\Omega_n)_n$ defined in \eqref{Interv1} is strictly increasing and converges to $\kappa_2$.  Therefore, this sequence will intersect the domain $\R\backslash[\kappa_1,\kappa_2]$ at a finite set embedded in $(-\infty,\kappa_1)$, and the solutions that we are able to construct are paired with  these finite  singular points. More precisely, we get  the following result.
\begin{pro}\label{prop-omegan-low}
	 Let $f_0>0$ 
 satisfy \eqref{H2}-\eqref{H1} and  consider $n,m\geqslant 1$. Then, the following assertions hold true.
\begin{enumerate}
\item The sequence $(\widehat{\Omega}_n)_{n\geqslant1}$ defined in \eqref{Interv1} is strictly increasing and converges to $\kappa_2.$
\item The function  $\zeta_n$ satisfies
$$
\forall\, \Omega\leqslant\min(\kappa_1,\widehat{\Omega}_n),\quad  \zeta_n\big(\Omega\big)<0.
$$

\item If  $n\leqslant\frac{\kappa_1}{\kappa_2-\kappa_1}$, then $\widehat{\Omega}_n<\tfrac{n\kappa_2}{n+1}\leqslant \kappa_1$ and 
$$
 \zeta_n\big(\tfrac{n\kappa_2}{n+1}\big)>0. 
$$
\item  If  $3\leqslant m\leqslant{\frac{f_0(0)}{f_0(1)-f_0(0)}}$ then $\zeta_m(\Omega)=0$ admits  at least one  solution $\Omega_m\in(\widehat{\Omega}_m, \tfrac{m\kappa_2}{m+1}).$ In addition,
$$
\forall\,n\geqslant2,\quad \zeta_{nm}(\Omega_m)<0.
$$

\end{enumerate}
\end{pro}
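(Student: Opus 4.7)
I will prove the four parts in order: (1) is about the scalar sequence $\widehat{\Omega}_n$, (2) gives a sharp one-sided estimate on $\zeta_n$ in the defocusing regime, (3) exhibits a point where $\zeta_n>0$, and (4) glues these via the intermediate value theorem together with an extra estimate on $\widehat{\Omega}_{nm}$.

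For (1), I will write $\widehat{\Omega}_n=\kappa_2-\tfrac{n+1}{n}b_n$ with $b_n:=\int_0^1 s^{2n+1}f_0(s)\,ds$. Positivity of $f_0$ makes $b_n$ strictly decreasing in $n$, and combined with $\tfrac{n+1}{n}>\tfrac{n+2}{n+1}$ this gives strict monotonicity of $\widehat{\Omega}_n$; dominated convergence gives $b_n\to 0$ and hence $\widehat{\Omega}_n\to\kappa_2$. The key step is (2): the idea is to combine, in the defocusing regime, the strict monotonicity of $F_{n,\Omega}$ with $F_{n,\Omega}\in[1,F_{n,\Omega}(1)]$ from Lemma \ref{lem-pot1}(1), and the pointwise inequality $f_0(s)-2\Omega\geqslant 0$ (strict on $(0,1]$ by strict monotonicity of $f_0$) forced by $\Omega\leqslant\kappa_1=f_0(0)/2$. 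This yields
\begin{equation*}
\int_0^1 F_{n,\Omega}(s)\,s^{2n+1}(f_0(s)-2\Omega)\,ds<F_{n,\Omega}(1)\int_0^1 s^{2n+1}(f_0(s)-2\Omega)\,ds,
\end{equation*}
and a direct computation then collapses the upper bound for $\zeta_n(\Omega)$ to $F_{n,\Omega}(1)\tfrac{n}{n+1}(\Omega-\widehat{\Omega}_n)\leqslant 0$ whenever $\Omega\leqslant\widehat{\Omega}_n$, giving (2).

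For (3), the condition $n\leqslant\kappa_1/(\kappa_2-\kappa_1)$ is equivalent to $\tfrac{n\kappa_2}{n+1}\leqslant\kappa_1$. To show $\widehat{\Omega}_n<\tfrac{n\kappa_2}{n+1}$ I will use the strict bound $b_n>\tfrac{f_0(0)}{2(n+1)}=\tfrac{\kappa_1}{n+1}$ from the strict monotonicity of $f_0$, so $\tfrac{n+1}{n}b_n>\tfrac{\kappa_1}{n}\geqslant\tfrac{\kappa_2}{n+1}$. At $\Omega_\star:=\tfrac{n\kappa_2}{n+1}$ the first term in \eqref{zeta-eq} vanishes and the remaining integrand is nonnegative and strictly positive on $(0,1]$ (since $2\Omega_\star\leqslant f_0(0)\leqslant f_0(s)$), giving $\zeta_n(\Omega_\star)>0$.

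For (4), the hypothesis $m\leqslant\tfrac{f_0(0)}{f_0(1)-f_0(0)}$, together with $\kappa_1=f_0(0)/2$ and $\kappa_2\leqslant f_0(1)/2$, upgrades to $m\leqslant\kappa_1/(\kappa_2-\kappa_1)$, so (3) at $n=m$ gives $\zeta_m(\tfrac{m\kappa_2}{m+1})>0$ while (2) at $\Omega=\widehat{\Omega}_m$ gives $\zeta_m(\widehat{\Omega}_m)<0$; standard continuous dependence of the normalized solution of \eqref{diff-Fn} on $\Omega$ makes $\zeta_m$ continuous, so the intermediate value theorem yields $\Omega_m\in(\widehat{\Omega}_m,\tfrac{m\kappa_2}{m+1})$ with $\zeta_m(\Omega_m)=0$. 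For $\zeta_{nm}(\Omega_m)<0$ when $n\geqslant 2$, I will bound $\tfrac{nm+1}{nm}\int_0^1 s^{2nm+1}f_0(s)\,ds\leqslant\tfrac{f_0(1)}{2nm}$ using $f_0\leqslant f_0(1)$, obtaining $\widehat{\Omega}_{nm}\geqslant\tfrac{m\kappa_2}{m+1}$ as soon as $n\geqslant\tfrac{(m+1)f_0(1)}{2m\kappa_2}$; combining $\kappa_2\geqslant f_0(0)/2$ with the hypothesis this quantity is at most $(m+1)^2/m^2\leqslant 16/9<2$ for $m\geqslant 3$, so any $n\geqslant 2$ works, and (2) applied at $\Omega_m<\widehat{\Omega}_{nm}$ concludes. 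The main obstacle is the sharp integral estimate in (2); once Lemma \ref{lem-pot1}'s strict monotonicity of $F_{n,\Omega}$ is combined with the sign of $f_0-2\Omega$, the remainder reduces to algebra and the intermediate value theorem.
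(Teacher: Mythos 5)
Your proposal is correct and follows essentially the same route as the paper: positivity/monotonicity of $f_0$ for (1), the monotonicity of $F_{n,\Omega}$ combined with $f_0-2\Omega\geqslant 0$ to collapse $\zeta_n(\Omega)$ to $F_{n,\Omega}(1)\tfrac{n}{n+1}(\Omega-\widehat{\Omega}_n)$ for (2), vanishing of the first term of $\zeta_n$ at $\tfrac{n\kappa_2}{n+1}$ for (3), and the intermediate value theorem plus the bound $\widehat{\Omega}_{nm}\geqslant\kappa_2-\tfrac{f_0(1)}{2nm}$ for (4). Your only deviations are cosmetic: you prove $\widehat{\Omega}_n<\tfrac{n\kappa_2}{n+1}$ by the direct computation the paper mentions as an alternative, and you treat all $n\geqslant 2$ uniformly in the last step where the paper does $n=2$ and then invokes monotonicity of $(\widehat{\Omega}_n)$.
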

\begin{proof}
\medskip \noindent{\bf (1)}  According to \eqref{Interv1} and the definition \eqref{kappa2} we get
$$
\widehat{\Omega}_n=\kappa_2-\frac{n+1}{n}\int_0^1s^{2n+1}f_0(s)ds.
$$
Then the monotonicity of this sequence follows from the positivity of the profile $f_0.$  The convergence to $\kappa_2$ is easy to check.

\medskip \noindent{\bf (2)} Since $\Omega\leqslant \kappa_1=\frac{f_0(0)}{2}$ then we get by the monotonicity of $f$
\begin{equation}\label{bound-f0-omega}
f_0(r)-2\Omega\geqslant f_0(r)-f_0(0)\geqslant0.
\end{equation}
Combined with the fact that  $F_{n,\Omega}$ is   increasing, which  follows from Lemma \ref{lem-pot1}-(1), it yields
\begin{align*}
\int_0^1 F_{n,\Omega}(r)r^{2n+1}\big(f_0(r)-2\Omega\big)dr&< F_{n,\Omega}(1)\int_0^1r^{2n+1}\big(f_0(r)-2\Omega\big)dr\\
&=  F_{n,\Omega}(1)\left(\int_0^1r^{2n+1}f_0(r)dr-\frac{\Omega}{n+1}\right).
\end{align*}
Thus, we get from \eqref{zeta-eq} and  the definition of $\widehat{\Omega}_n$ in \eqref{Interv1} the following inequality
\begin{align*}
\zeta_n(\Omega)&< F_{n,\Omega}(1)\left(\frac{n}{n+1}\Omega-\frac{n}{n+1}\int_0^1 rf_0(r)dr+\int_0^1 r^{2n+1}f_0(r)dr\right)\\
&= F_{n,\Omega}(1)\frac{n}{n+1}\left(\Omega-\widehat\Omega_n\right).
\end{align*}
This implies in particular that 
$$
\forall \Omega\leqslant\widehat\Omega_n, \quad \zeta_n(\Omega)<0.
$$

\medskip\noindent {\bf (3)} First, note that the assumption $n\leqslant\frac{\kappa_1}{\kappa_2-\kappa_1}$ is equivalent to $\tfrac{n\kappa_2}{m+1}=\frac{n}{n+1}\int_0^1 sf_0(s)ds\leqslant\kappa_1$. It  implies from \eqref{zeta-eq} that in the particular case  $\Omega=\tfrac{n\kappa_2}{n+1}$
$$
 \zeta_n\big(\tfrac{n\kappa_2}{n+1}\big)=\int_0^1 F_{n,\Omega}(r)r^{2n+1}\big(f_0(r)-2\tfrac{n\kappa_2}{n+1}\big)dr>0,
$$
by using \eqref{bound-f0-omega}.  To prove $
\widehat\Omega_n<\tfrac{n\kappa_2}{n+1},
$ it can be done  through direct computations based on the definition of $\widehat{\Omega}_n$ in \eqref{Interv1}, or by simply evoking  the second point in Proposition \ref{prop-omegan-low} together with the estimate $\zeta_n\big(\tfrac{n\kappa_2}{n+1}\big)>0$

\medskip\noindent {\bf (4)} Take  $3\leqslant m\leqslant{\frac{f_0(0)}{f_0(1)-f_0(0)}}$, which implies by the monotonicity  that $m\leqslant{\frac{f_0(0)}{f_0(1)-f_0(0)}}\leqslant\frac{\kappa_1}{\kappa_2-\kappa_1}.$ Then combining the points $(2)$ and $(3)$ with the intermediate value theorem, we deduce the existence of a real number  $\Omega_m\in (\widehat\Omega_m,\tfrac{m\kappa_2}{m+1})$ such that
$$
\zeta_m(\Omega_m)=0.
$$
 Since  $m\leqslant\frac{\kappa_1}{\kappa_2-\kappa_1}$, implying that   $\frac{m}{m+1}\int_0^1 sf_0(s)ds\leqslant\kappa_1$, then 
 \begin{align*}\widehat{\Omega}_{2m}&=\int_0^1sf_0(s)ds-\frac{2m+1}{2m}\int_0^1s^{4m+1}f_0(s)ds\\
 &\geqslant \int_0^1sf_0(s)ds-\frac{1}{4m}f_0(1)\\
 &\geqslant \frac{m}{m+1}\int_0^1 sf_0(s)ds.
 \end{align*}
 Thus to get the inequality
 \begin{align*}\int_0^1sf_0(s)ds-\frac{1}{4m}f_0(1)
 &\geqslant \frac{m}{m+1}\int_0^1 sf_0(s)ds\Longleftrightarrow \int_0^1sf_0(s)ds\geqslant \frac{m+1}{4m}f_0(1). 
 \end{align*}
This latter inequality is satisfied provided that
\begin{align}\label{pop-R1}
f_0(0)\geqslant \frac{m+1}{2m}f_0(1).
\end{align}
From our assumption on $m$ ($3\leqslant m\leqslant{\frac{f_0(0)}{f_0(1)-f_0(0)}}$) and the positivity of $f_0$ we get
\begin{align*}
f_0(0)&\geqslant \frac{m}{m+1}f_0(1)\geqslant \frac{m+1}{2m}f_0(1),
\end{align*}
 which implies \eqref{pop-R1}. Consequently,
 \begin{align*}\widehat{\Omega}_{2m}
 &\geqslant \frac{m}{m+1}\int_0^1 sf_0(s)ds>\Omega_m,
 \end{align*}
 leading in view of the point $(2)$ to
 $$
 \zeta_{2m}(\Omega_m)<0.
 $$
By the monotonicity of the sequence $(\widehat{\Omega}_{n})_{n\geqslant1}$ established in the first point $(1)$, we find
$$
\forall\, n\geqslant 2,\quad \widehat{\Omega}_{nm}>\widehat{\Omega}_{2m}>\Omega_m.
$$
Thus by the second point $(2)$ we deduce that
$$
\forall\, n\geqslant 2,\quad \zeta_{nm}(\Omega_m)<0,
$$
and this concludes  the proof.
 \end{proof}
 
\subsubsection{ Abundance of the zeroes  (focusing case)}
In this  subsection, the profile   $f_0$ is negative and satisfies \eqref{H2}-\eqref{H1}. We intend to prove that  in this setting the dispersion equation admits  infinitely many solutions for large symmetry and the eigenvalues are located in the region $(\kappa_2,\infty),$ corresponding to the {\it defocusing} regime.
 As in the scarcity case analyzed before, the abundance of the  eigenvalues can be formally interpreted as follows. When $f_0<0$, the singular sequence  $(\widehat\Omega_n)_n$ defined in \eqref{Interv1} is strictly decreasing and converges to $\kappa_2$.  Therefore, this sequence will intersect the domain $\R\backslash[\kappa_1,\kappa_2]$ at a finite set embedded in $(\kappa_2,\infty)$, and the solutions that we are able to construct are paired with  the  singular points located in $(\kappa_2,\infty)$. Actually, we prove  the following result.

\begin{pro}\label{prop-omegam-asymp}
Let $m\in\N^*, \alpha\in(1,2)$ and $f_0<0$  satisfying \eqref{H2}-\eqref{H1}.
There exists $m_0$ large enough depending on $f_0$ such that  the following assertions hold true. For any $m\geqslant m_0$, 
\begin{enumerate}
\item There exists  $\Omega_m\in(\widehat\Omega_m-{m^{-\alpha}},\widehat\Omega_m)$ such that
$$
\zeta_m(\Omega_m)=0.
$$
\item For any $n\geqslant1$ we have
$$
\Omega_m\neq \widehat\Omega_{nm}.
$$
\item For any  $n\geqslant2$
$$
\zeta_{nm}(\Omega_m)\neq 0.
$$
\end{enumerate}
\end{pro}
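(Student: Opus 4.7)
The starting point is the exact algebraic identity
$$\zeta_n(\Omega) = \frac{n}{n+1}(\Omega-\widehat{\Omega}_n)\, F_{n,\Omega}(1) + \int_0^1 \big[F_{n,\Omega}(s)-F_{n,\Omega}(1)\big]\, s^{2n+1}(f_0(s)-2\Omega)\, ds,$$
obtained from \eqref{zeta-eq} by writing $F_{n,\Omega}(s)=F_{n,\Omega}(1)+(F_{n,\Omega}(s)-F_{n,\Omega}(1))$ and using the relation $\int_0^1 s^{2n+1}f_0(s)\,ds=\tfrac{n}{n+1}(\kappa_2-\widehat{\Omega}_n)$ implicit in \eqref{Interv1}. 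This realises the exact cancellation of the two a priori leading contributions and isolates a ``spectral'' term proportional to $\Omega-\widehat{\Omega}_n$ plus a remainder that vanishes at the flat linearisation $F_{n,\Omega}\equiv 1$. Since $f_0<0$, the sequence $(\widehat{\Omega}_k)_k$ is strictly decreasing with $\widehat{\Omega}_k-\kappa_2\sim|f_0(1)|/(2k)$; thus on $[\widehat{\Omega}_m-m^{-\alpha},\widehat{\Omega}_m]$ with $\alpha\in(1,2)$ one has $\Omega-\kappa_2\gtrsim m^{-1}$ for $m$ large. Fixing $\theta\in(0,2-\alpha)$, Lemma~\ref{lem-pot1}(2) supplies $\|F_{m,\Omega}-1\|_\infty\lesssim m^{\theta-1}$ together with strict monotonicity of $F_{m,\Omega}$ and $F_{m,\Omega}(1)\in[\tfrac12,1]$; combined with $\int_0^1 s^{2m+1}ds=O(m^{-1})$ this yields a uniform bound of order $m^{\theta-2}=o(m^{-\alpha})$ for the remainder.

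\paragraph{Part (1) by IVT.} At $\Omega=\widehat{\Omega}_m-m^{-\alpha}$ the principal term equals $-\tfrac{m}{m+1}m^{-\alpha}F_{m,\Omega}(1)\leq-\tfrac13 m^{-\alpha}$ and dominates the $o(m^{-\alpha})$ remainder, so $\zeta_m(\widehat{\Omega}_m-m^{-\alpha})<0$. At $\Omega=\widehat{\Omega}_m$ the principal term vanishes and
$$\zeta_m(\widehat{\Omega}_m)=\int_0^1 \big[F_{m,\widehat{\Omega}_m}(s)-F_{m,\widehat{\Omega}_m}(1)\big]\, s^{2m+1}(f_0(s)-2\widehat{\Omega}_m)\, ds.$$
The bracket is strictly positive on $[0,1)$ by monotonicity of $F_{m,\widehat{\Omega}_m}$; since $|\kappa_2|=\int_0^1 s|f_0(s)|\,ds>|f_0(1)|/2$ (strict because $|f_0|$ is strictly decreasing), one has $f_0(1)-2\widehat{\Omega}_m>0$ for $m$ large, and by continuity there exists $s_*<1$ such that $f_0(s)-2\widehat{\Omega}_m>0$ on $(s_*,1]$. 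A Laplace-type estimate shows that the contribution from $(s_*,1]$ is of order $m^{-2}$ and strictly positive, while the contribution from $[0,s_*]$ is exponentially small in $m$ (via the factor $s^{2m+1}\leq s_*^{2m+1}$). Hence $\zeta_m(\widehat{\Omega}_m)>0$ and the intermediate value theorem delivers $\Omega_m\in(\widehat{\Omega}_m-m^{-\alpha},\widehat{\Omega}_m)$ with $\zeta_m(\Omega_m)=0$.

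\paragraph{Parts (2), (3), and main obstacle.} Strict monotonicity of $(\widehat{\Omega}_k)$ and the asymptotic $\widehat{\Omega}_m-\widehat{\Omega}_{nm}\sim\tfrac{(n-1)|f_0(1)|}{2nm}\gtrsim m^{-1}$ for $n\geq 2$, combined with $m^{-\alpha}\ll m^{-1}$ (since $\alpha>1$), give $\Omega_m>\widehat{\Omega}_m-m^{-\alpha}>\widehat{\Omega}_{nm}$; together with $\Omega_m<\widehat{\Omega}_m=\widehat{\Omega}_{1\cdot m}$, this proves (2). For (3), apply the master identity at $(nm,\Omega_m)$: the principal term is $\gtrsim m^{-1}$ uniformly in $n\geq 2$, while Lemma~\ref{lem-pot1}(2) at symmetry $nm$ gives $\|F_{nm,\Omega_m}-1\|_\infty\lesssim m^{\theta-1}/n$, so the remainder is $O(m^{\theta-2}/n^2)$; the ratio principal/remainder is $\gtrsim n^2 m^{1-\theta}\to\infty$ uniformly in $n\geq 2$ for any $\theta<1$, so $\zeta_{nm}(\Omega_m)$ inherits the positive sign of the principal term and is nonzero. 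The most delicate step is the sign analysis of $\zeta_m(\widehat{\Omega}_m)$: it rests simultaneously on the strict monotonicity of $F_{m,\widehat{\Omega}_m}$ (extracted from the Sturm--Liouville equation \eqref{diff-Fn}) and on the strict inequality $|\kappa_2|>|f_0(1)|/2$ (which pins down the correct side of $\widehat{\Omega}_m$ where $\Omega_m$ lies), and it is essentially made accessible only by the master identity, which achieves the exact cancellation of the contributions that would otherwise ambiguate the sign.
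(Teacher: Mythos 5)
Your proposal follows essentially the same route as the paper: your ``master identity'' is exactly the paper's splitting $\zeta_n=\zeta_{n,1}+\zeta_{n,2}$ with $\rho_{n,\Omega}=F_{n,\Omega}-F_{n,\Omega}(1)$, the signs of $\zeta_m$ at $\widehat\Omega_m-m^{-\alpha}$ and at $\widehat\Omega_m$ are obtained from the same bounds (Lemma \ref{lem-pot1}-(2) and Lemma \ref{prop-InvertT1}) plus the intermediate value theorem, and parts (2)--(3) rest on the same gap estimate $\widehat\Omega_m-\widehat\Omega_{nm}\gtrsim m^{-1}$ against $|\Omega_m-\widehat\Omega_m|\leqslant m^{-\alpha}=o(m^{-1})$. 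The only discrepancy is quantitative and harmless: the positive contribution to $\zeta_m(\widehat\Omega_m)$ concentrated near $s=1$ is of order $m^{-3}$, not $m^{-2}$ (it requires the lower bound $F_{m,\widehat\Omega_m}(s)-F_{m,\widehat\Omega_m}(1)\gtrsim (1-s)/m$, which the paper extracts from the lower bound on $\nu_\Omega$ in Lemma \ref{prop-InvertT1} and hypothesis \eqref{H1}), but any polynomial rate dominates the exponentially small contribution from $[0,s_*]$, so your sign argument and conclusions stand.
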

\begin{proof}
\medskip\noindent
{\bf (1)} Let us decompose $F_{n,\Omega}$, the solution to \eqref{diff-Fn},  as follows
$$
F_{n,\Omega}(r)=:F_{n,\Omega}(1)+\rho_{n,\Omega}(r).
$$
Then, straightforward computations based on  \eqref{zeta-eq} yield
\begin{align}\label{zeta-eqMS1}
\nonumber\zeta_n(\Omega)&=\frac{n}{n+1}F_{n,\Omega}(1)\left(\Omega-\widehat\Omega_n\right)+\int_0^1\rho_{n,\Omega}(s) s^{2n+1}\big(f_0(s)-2\Omega\big)ds\\
&=:\zeta_{n,1}(\Omega)+\zeta_{n,2}(\Omega),
\end{align}
where $\zeta_{n,2}$ denotes the last integral term. The first step is to show that
\begin{align}\label{zeta-eqMSS1}
\zeta_n(\widehat\Omega_n)>0.
\end{align}
One has easily from \eqref{zeta-eqMS1} that
$$
\zeta_n(\widehat\Omega_n)=\int_0^1\rho_{n,\widehat\Omega_n}(s) s^{2n+1}\big(f_0(s)-2\widehat\Omega_n\big)ds.
$$
Next,we shall establish the following bounds: there exists $C_0>0$ such that for any $\theta\in(0,1)$
\begin{align}\label{L-U-bound}
{ \frac{C_0^{-1}(1-r)}{n(\Omega-\kappa_1)}\leqslant \rho_{n,\Omega}(r)\leqslant \frac{C_0(1-r)^\theta}{\theta\, n(\Omega-\kappa_2)^\theta}\cdot}
\end{align}
For that, we implement first Taylor formula with $\rho_{n,\Omega}(1)=0$ 
\begin{align*}
\rho_{n,\Omega}(r)&=-\int_{r}^1\rho_{n,\Omega}^\prime(s)ds=-\int_{r}^1F_{n,\Omega}^\prime(s)ds.
\end{align*}
Similarly to \eqref{Tip1} we find
\begin{align*}
F_{n,\Omega}^\prime(r)=-\frac{1}{r^{2n+1}}\int_0^r s^{2n+1}\nu_{\Omega} (s)F_{n,\Omega} (s) ds,
\end{align*}
amounting to
\begin{align*}
\rho_{n,\Omega}(r)=\int_{r}^1\frac{1}{s^{2n+1}}\int_0^s \tau^{2n+1}\nu_{\Omega} (\tau)F_{n,\Omega} (\tau) d\tau.
\end{align*}
Combined with Lemma \ref{prop-InvertT1} and Lemma \ref{lem-pot1}-(2), we find
  that for any $\theta\in (0,1]$
 \begin{align*}
0\leqslant \rho_{n,\Omega}(r)&\leqslant \frac{C_0}{(\Omega-\kappa_2)^\theta}\int_{r}^1\frac{1}{s^{2n+1}}\int_0^s \frac{\tau^{2n+1}}{(1-\tau)^{1-\theta}}d\tau\\
&\leqslant \frac{C_0}{(\Omega-\kappa_2)^\theta}\int_{r}^1\frac{1}{s^{2n+1}}\int_0^s \frac{\tau^{2n+1}}{(1-s)^{1-\theta}}d\tau\\
&\leqslant \frac{C_0(1-r)^\theta}{\theta\, n(\Omega-\kappa_2)^\theta}\cdot
\end{align*}
For the lower bound we use once again Lemma \ref{prop-InvertT1} and Lemma \ref{lem-pot1} together with the estimate below, which follows from  \eqref{H2}-\eqref{H1},
$$
\forall r\in(0,1],\quad \frac{f_0^\prime(r)}{r}\geqslant C_1>0
$$ 
for some $C_1>0$.  Therefore,  we obtain
\begin{align*}
 \rho_{n,\Omega}(r)&\geqslant \frac{1}{2(\Omega-\kappa_1)}\int_{r}^1\frac{1}{s^{2n+1}}\int_0^s {\tau^{2n+1}}\frac{f_0^\prime(\tau)}{\tau}d\tau ds\\
&\geqslant \frac{C_1(1-r)}{8(n+1)(\Omega-\kappa_1)}
\end{align*}
Consequently, by taking $C_0$ large enough we get 
$$
\frac{C_1}{8(n+1)}\geqslant {C_0^{-1}}{n},
$$
which achieves the proof of \eqref{L-U-bound}. Now from this inequality we deduce that
\begin{align}\label{L-U-bound1}
\frac{C_0^{-1}(1-r)}{(\widehat\Omega_n-\kappa_1)n}\leqslant \rho_{n,\widehat\Omega_n}(r)\leqslant \frac{C_0(1-r)^\theta}{\theta\, n(\widehat\Omega_n-\kappa_2)^\theta}\cdot
\end{align}
By \eqref{Interv1}, since $f_0(1)<0$ one gets the asymptotic
\begin{align}\label{omega-a}
\widehat\Omega_n=\kappa_2-\frac{f_0(1)}{2n}+O(n^{-2}),
\end{align}
and we find by \eqref{L-U-bound1} together with the fact that $\widehat\Omega_n-\kappa_1>0,\widehat\Omega_n-\kappa_2>0$,  that   by taking $C_0$ large enough, we  get
\begin{align}\label{L-U-bound2}
\forall \, n\geqslant 1, \forall r\in(0,1),\quad \frac{C_0^{-1}(1-r)}{n}\leqslant \rho_{n,\widehat\Omega_n}(r)\leqslant \frac{C_0(1-r)^\theta}{\theta\, n^{1-\theta}},
\end{align}
and 
\begin{align*}
f_0(s)-2\widehat\Omega_n=f_0(s)-2\kappa_2+O(n^{-1}):=f_1(s)+O(n^{-1}).
\end{align*}
Remark that by the monotonicity of $f_0$
$$
f_1(0)<0, \quad\textnormal{and}\quad f_1(1)>0.
$$
Then by continuity  and the uniform convergence, we can find $\delta,\varepsilon \in(0,1)$ independent of $n$ such that 
\begin{align}\label{BB-11}
\forall \, s\in[\delta,1],\quad f_0(s)-2\widehat\Omega_n\geqslant\varepsilon.
\end{align}
Making the splitting
\begin{align}\label{Arta1}
\nonumber \zeta_n(\widehat\Omega_n)&=\int_0^\delta\rho_{n,\widehat\Omega_n}(s) s^{2n+1}\big(f_0(s)-2\widehat\Omega_n\big)ds+\int_\delta^1\rho_{n,\widehat\Omega_n}(s) s^{2n+1}\big(f_0(s)-2\widehat\Omega_n\big)ds\\
&:=I_{n,1}+I_{n,2}.
\end{align}
Applying  \eqref{L-U-bound2} with $\theta=1$
\begin{align}\label{Arta2}
|I_{n,1}|&\leqslant { {C_0 \delta^{2n+2}}.}
\end{align}
For $I_{n,2}$ we use \eqref{L-U-bound2} and \eqref{BB-11}  leading for large $n$ to 
\begin{align}\label{Arta3}
 \nonumber I_{n,2}&\geqslant  \frac{C_0^{-1} \varepsilon}{n} \int_\delta^1(1-s){s^{2n+1}}ds\\
&\geqslant \frac{C_0^{-1} \varepsilon}{n}\left( \int_0^1(1-s){s^{2n+1}}ds+O(\delta^{2n+1})\right).
\end{align}
On the other hand,  we find
\begin{align*}
 \int_0^1(1-s){s^{2n+1}}ds&=\frac{1}{(2n+2)(2n+3)}\cdot
\end{align*}
 Inserting this inequality into \eqref{Arta3} yields 
\begin{align}\label{Arta4}
 I_{n,2}&\geqslant  \frac{C_0^{-1} \varepsilon}{n^{3}}\left(1+O(\delta^{2n+1})\right).
\end{align}
Putting together \eqref{Arta1}, \eqref{Arta2} and  \eqref{Arta4}   yields for large $n$
\begin{align*}
\zeta_n(\widehat\Omega_n)&\geqslant \frac{C_0^{-1}\varepsilon }{n^{3}}- {{C_0 \delta^{2n+1}}}\geqslant \frac{C_1(\varepsilon,\delta) }{n^{3}},
\end{align*}
for some constant $C_1(\varepsilon,\delta)>0.$ This ensures \eqref{zeta-eqMSS1}. \\
Coming back to \eqref{zeta-eqMS1} and using \eqref{L-U-bound} with $\theta=1$ allows to get
\begin{align*}
|\zeta_{n,2}(\Omega)|&\leqslant \frac{C_0}{n(\Omega-\kappa_2)}\int_0^1(1-s) s^{2n+1}ds\leqslant \frac{C_0}{ n^3(\Omega-\kappa_2)}.
\end{align*}
Let $\alpha>1$ and take $\Omega\geqslant \widehat\Omega_n-n^{-\alpha}$, then  using  \eqref{omega-a} we deduce  for large $n$
\begin{align}\label{zeta-eqMP4}
|\zeta_{n,2}(\Omega)|&\leqslant \frac{C_0}{n^2}\cdot
\end{align}
On the other hand, by virtue of \eqref{zeta-eqMS1} and Lemma \ref{lem-pot1}, we get for large $n$
\begin{align}\label{zeta-eqMDD}
\zeta_{n,1}( \widehat\Omega_n-n^{-\alpha})&\leq -\tfrac12 n^{-\alpha}\frac{n}{n+1} \leqslant -\tfrac13 n^{-\alpha},
\end{align}
and putting together \eqref{zeta-eqMP4} and \eqref{zeta-eqMDD} we obtain that for $\alpha\in(1,2)$ and $n$ large enough
\begin{align*}
\zeta_{n}( \widehat\Omega_n-n^{-\alpha})&\leqslant -\tfrac14 n^{-\alpha}.
\end{align*}
It follows that for $n$ large enough
$$
\zeta_{n}( \widehat\Omega_n-n^{-\alpha})<0,
$$
which gives the statement by applying the intermediate value theorem showing the existence of at least one solution 
\begin{align}\label{encadre}
\Omega_m\in(\widehat\Omega_m-m^{-\alpha},\widehat\Omega_m), \quad  \zeta_m(\Omega_m)=0.
\end{align}

\medskip\noindent
{\bf(2)} From \eqref{Interv1} and the fact that $f_0$ is negative we infer that $m\mapsto\widehat\Omega_m$ is strictly decreasing. Thus by using \eqref{omega-a} we deduce that
$$
\forall n\geqslant 2,\quad |\widehat\Omega_{nm}-\widehat\Omega_m|\geqslant |\widehat\Omega_{2m}-\widehat\Omega_m|\geqslant c\, m^{-1},
$$
for some constant $c>0$. On the other hand, using \eqref{encadre}, with $\alpha=\frac32$ we get
$$
|\Omega_{m}-\widehat\Omega_m|\leqslant c m^{-\frac32}.
$$
Therefore for large $m$, we deduce that
\begin{align}\label{lowe-SS}
\forall n\geqslant2,\quad |\widehat\Omega_{nm}- \Omega_{m}|\geqslant cm^{-1}.
\end{align}

\medskip\noindent
{\bf(3)} From \eqref{zeta-eqMS1} we find
 \begin{align}\label{zeta-eqMSL1}
\nonumber\zeta_{nm}(\Omega_m)&=\frac{nm}{nm+1}F_{nm,\Omega}(1)\left(\Omega_m-\widehat\Omega_{nm}\right)+\int_0^1\rho_{nm,\Omega_m}(s) s^{2nm+1}\big(f_0(s)-2\Omega_m\big)ds\\
&=\zeta_{nm,1}(\Omega_m)+\zeta_{nm,2}(\Omega_m).
\end{align}
Then, we write by \eqref{lowe-SS} and Lemma \ref{lem-pot1}
$$
\zeta_{nm,1}(\Omega_m)\geqslant c m^{-1}.
$$
For the second part, we use \eqref{zeta-eqMP4} and $\Omega_m\geqslant \widehat\Omega_{nm}-(nm)^{-\alpha}$ to get
\begin{align*}
|\zeta_{nm,2}(\Omega_m)|&\leqslant \frac{C_0}{n^2 m^2}\leqslant C_0 m^{-2}.
\end{align*}
Combining the preceding estimates with \eqref{zeta-eqMSL1} we deduce that for large $m$ and $n\geqslant 2$
$$
\zeta_{nm,2}(\Omega_m)\geqslant cm^{-1}.
$$
This shows in particular that $\zeta_{nm}$ is not vanishing at $\Omega_m$ as stated. 
\end{proof}

\subsection{Kernel generators}\label{sec-kernel-gen}
In this section, we will give the generators of the kernel of the linear operator $D_g \widehat{G}(\Omega_m,0):\mathscr{C}^{1,\alpha}_{s,m}(\D)\to \mathscr{C}^{1,\alpha}_{s,m}(\D)$ with  $\Omega_m$ being the eigenvalues  constructed along  the previous two sections. We set 
\begin{equation}\label{AOmega}
\mathcal{A}_{\Omega,m}:=\Big\{n\in m\N^\star, \quad \zeta_n(\Omega)=0\Big\}.
\end{equation}
then we deduce from \eqref{lin-op} and Proposition \ref{radialfunctions} that for $\Omega\notin ([\kappa_1,\kappa_2]\cup\mathbb{S}\cup\mathcal{S}^m_{\textnormal{sing}})$
\begin{align}\label{dim-kern-M}
\mbox{dim Ker}D_g \widehat{G}(\Omega,0)=\mbox{Card }\mathcal{A}_{\Omega,m}. 
\end{align}
From Proposition \ref{prop-omegan-low} and Proposition \ref{prop-omegam-asymp}, we can make the following summary. \\
 $\bullet$ {\it Case $1$: $f_0>0$ $($defocusing case$)$.} For $3\leqslant m\leqslant{\frac{f_0(0)}{f_0(1)-f_0(0)}}$, we  have at least one  $\Omega_m\in(-\infty,\kappa_1)\backslash\mathcal{S}^m_{\textnormal{sing}}$, with $\zeta_m(\Omega_m)=0$ and  $\zeta_{nm}(\Omega_m)\neq0$, for any $n\geqslant2.$ This implies that
$$
\mbox{dim Ker}D_g \widehat{G}(\Omega_m,0)=1.
$$
$\bullet$ {\it Case $2$:  $f_0<0$ $($focusing case$)$. } There exists $m_0\geqslant1$ large enough such that, for any $m\geqslant m_0$ there exists $\Omega_m\in(\widehat\Omega_m-{m^{-\alpha}},\widehat\Omega_m)$ with $\alpha\in(0,1)$ such that  $\zeta_m(\Omega_m)=0$ and $\zeta_{nm}(\Omega_m)\neq0$, for any $n\geqslant2.$ It follows that
$$
\mbox{dim Ker}D_g \widehat{G}(\Omega_m,0)=1.
$$ 
Notice that the assumption \eqref{pouisa} required at many steps to get  the dispersion equation in the defocusing case   is satisfied with our choice of $\Omega_m$ according the previous bounds together with \eqref{omega-a}, which lead   for any given $\theta\in(0,1)$ to 
$$
\lim_{m\to\infty}\frac{C_0}{m \theta (\Omega_m-\kappa_2)^\theta}=0<1,
$$
Another point to underline concerns the set $\mathbb{S}$ introduced in Proposition \ref{radialfunctions} which is a finite set embedded in $(\kappa_2,\infty).$ As the sequence $(\Omega_m)_{m\geqslant m_0}$ is convergent to $\kappa_2$, then it does not meet the set $\mathbb{S}$ for large $m_0$
\\
In what follows, we shall check that the elements of the $\hbox{Ker}D_g \widehat{G}(\Omega_m,0)$ are actually smooth enough, and this property is implicitly  used  to get \eqref{dim-kern-M}. 
\begin{pro}\label{prop-element-kernel}
Let $ f_0$ satisfy \eqref{H2}--\eqref{H1}, with $\beta\in(0,1).$ Let  $ \alpha\in(0,\beta),$ $m\geqslant1,$ and $\Omega_m$  as in the cases $1$ and $2$ discussed above. Then, the kernel of $D_g \widehat{G}(\Omega_m,0)$ is one-dimensional and generated by 
\begin{equation}\label{kernel-generator}
z=re^{i\theta}\in\mathbb{D}\mapsto h^\star(z)=h^\star_m(r)\cos(m\theta)\in \mathscr{C}_{s,m}^{1,\alpha}(\D),
\end{equation} with  
$$
h^\star_m(r)=r^m F_{n,\Omega_m}(r)\mu_{\Omega_m}^0(r)\int_1^r\frac{1}{F_{m,\Omega_m}^2(s)s^{2m+1}}\int_0^s F_{m,\Omega_m}(\tau)\tau^{2m+1}\left(\Omega_m-\tfrac{f_0(\tau)}{2}\right)d\tau ds,
$$
and $-\tfrac{H_m[h_m^\star](1)}{G_m(1)}=\tfrac{1}{2(m+1)}\cdot$

\end{pro}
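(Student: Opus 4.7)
The strategy has three stages: establish one-dimensionality via Fourier analysis, verify the explicit generator by reducing to a Sturm--Liouville computation, and identify the ratio $-H_m[h_m^\star](1)/G_m(1)$.

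\emph{One-dimensionality.} By the Fourier decomposition \eqref{lin-op}, the kernel of $D_g\widehat{G}(\Omega_m,0)$ on $\mathscr{C}^{1,\alpha}_{s,m}(\D)$ splits across frequencies $n\in m\N$. Proposition \ref{radialfunctions} rules out mode $n=0$ (in the focusing case, enlarging $m_0$ if needed ensures $\Omega_m\notin\mathbb{S}$, since $\Omega_m\to\kappa_2$ while $\mathbb{S}$ is finite). For $n=\ell m$ with $\ell\geqslant 1$, Lemma \ref{Lem-disper} characterizes the non-triviality of $\ker\mathbb{L}_n^{\Omega_m}$ by $\zeta_n(\Omega_m)=0$, and by construction of $\Omega_m$ in Propositions \ref{prop-omegan-low} and \ref{prop-omegam-asymp} only $n=m$ satisfies this condition. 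Hence the kernel is spanned by a single function of the form $h_m^\star(r)\cos(m\theta)$.

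\emph{Explicit generator and ratio.} I will verify directly that the function $h_m^\star(r):=r^mF(r)\mu^0_{\Omega_m}(r)\phi(r)$, with $F=F_{m,\Omega_m}$ and $\phi,P$ as in the statement, solves the kernel equation \eqref{kernel-eq} at level $n=m$. Plugging the ansatz into the definition \eqref{Hn-def} of $H_m$ and using the Sturm--Liouville identity $\mu^0_{\Omega_m}s^{2m+1}F=-(s^{2m+1}F')'$ derived from \eqref{diff-Fn}, two successive integrations by parts are performed; their boundary contributions vanish at $r=0$ thanks to the asymptotic behaviors $s^{2m+1}F'(s)=O(s^{2m+2})$ and $P(s)=O(s^{2m+2})$, and at $r=1$ thanks to the normalization $\phi(1)=0$. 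The dispersion relation $\zeta_m(\Omega_m)=0$ supplies the key identity $P(1)/F(1)=\tfrac{1}{2}\bigl(\Omega_m-\tfrac{m}{m+1}\int_0^1 sf_0\,ds\bigr)$, and after reassembling all terms via the explicit form of $G_m$ in \eqref{Gn-def} one obtains the compact identity
\begin{equation*}
H_m[h_m^\star](r)=2mr^{2m}F(r)\phi(r)-\tfrac{1}{2(m+1)}r^{m+1}G_m(r).
\end{equation*}
Substituting back into \eqref{kernel-eq} shows that the kernel equation is equivalent to the scalar relation $-H_m[h_m^\star](1)/G_m(1)=1/(2(m+1))$; this ratio is read off by evaluating the displayed identity at $r=1$ (where $\phi(1)=0$) and using $G_m(1)=m(\Omega_m-\widehat\Omega_m)$ from \eqref{Gn1-1}.

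\emph{Regularity and main obstacle.} The inclusion $h_m^\star\in\mathscr{C}^{1,\alpha}_{s,m}(\D)$ follows from the $\mathscr{C}^2$ regularity of $F_{m,\Omega_m}$ on $[0,1]$ given by Lemma \ref{lem-pot1}, the $\mathscr{C}^{1,\alpha}$ regularity of $\mu^0_{\Omega_m}$ under hypothesis \eqref{H2} via Lemma \ref{lem-regM}, and the smoothness of the iterated integral $\phi$, whose integrability at the origin is ensured by the cancellation $P(s)/(F^2(s)s^{2m+1})=O(s)$ as $s\to 0$. The principal technical burden of the plan lies in the double integration by parts of step two: it requires careful bookkeeping of boundary contributions at the singular point $r=0$ together with the several algebraic cancellations forced by the dispersion relation, which ultimately collapse a complicated integral expression into the compact identity displayed above.
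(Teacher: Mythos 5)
Your dimension count is the same as the paper's (Fourier splitting, Proposition \ref{radialfunctions} for the mode $n=0$ with $\Omega_m\notin\mathbb{S}$, Lemma \ref{Lem-disper} plus the choice of $\Omega_m$ for the modes $n\in m\N^\star$), but your treatment of the generator goes the other way round. The paper \emph{derives} the formula: after normalizing $-H_m[h_m](1)/G_m(1)=\tfrac{1}{2(m+1)}$ by dilation, it divides the kernel equation by $r^m\mu^0_{\Omega_m}$, differentiates twice to obtain the inhomogeneous Sturm--Liouville problem $F_m''+\tfrac{2m+1}{r}F_m'+\mu^0_{\Omega_m}F_m=\Omega_m-\tfrac{f_0}{2}$ with $F_m(1)=0$, and solves it by variation of constants, discarding the two homogeneous pieces (one by $F_m(1)=0$, one because it is singular at the origin). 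You instead \emph{verify} the candidate by computing $H_m[h^\star_m]$ through integrations by parts and the dispersion relation; your target identity $H_m[h^\star_m](r)=2mr^{2m}F\phi-\tfrac{1}{2(m+1)}r^{m+1}G_m(r)$ is indeed correct (it is algebraically equivalent, for $r\in(0,1]$, to the kernel equation together with the stated normalization, using \eqref{Gn1-1} at $r=1$), and combined with the at-most-one-dimensionality coming from Lemma \ref{Lem-disper} it does yield the proposition. The trade-off: the paper's route explains where the formula comes from and gives uniqueness for free, while yours front-loads everything into one computation which you only assert ("two successive integrations by parts\dots one obtains the compact identity"); as written this pivotal step is a plan, not a proof, even though the identity and the way the dispersion relation enters through the boundary terms at $r=1$ (via $P(1)/F(1)$) are the right ones.

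The other place where you are thinner than the paper is the membership $h^\star\in\mathscr{C}^{1,\alpha}_{s,m}(\D)$, which is part of the statement and is used in the dimension count \eqref{dim-kern-M}. The paper writes $h^\star(z)=\textnormal{Re}\big[\mathcal{G}_m(|z|^2)z^m\big]$ and proves that $F_{m,\Omega_m}$ is $\mathscr{C}^2$ \emph{as a function of $r^2$}, computing the limits of the first and second derivatives of $\tilde F_{m,\Omega_m}$ at the origin from the ODE; it is precisely the $r^2$-structure in hypothesis \eqref{H2} that makes $f_0'(r)/r$, hence $\mu^0_{\Omega_m}$, regular at $r=0$. Your appeal to Lemma \ref{lem-regM} is misplaced (that lemma concerns $\mathcal{M}(\Omega,\cdot)$, not $\mu^0_{\Omega_m}(r)$), and "$\mathscr{C}^2$ regularity of $F_{m,\Omega_m}$ in $r$" alone does not make transparent why the two-dimensional function $r^mF\mu^0_{\Omega_m}\phi\cos(m\theta)$ is $\mathscr{C}^{1,\alpha}$ across the origin; you should either reproduce the paper's $r^2$-variable argument or argue explicitly that the factor $r^m$, $m\geqslant1$, together with $F\mu^0_{\Omega_m}\phi\in\mathscr{C}^{1,\alpha}([0,1])$ (which itself needs \eqref{H2} at $r=0$), suffices.
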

\begin{proof}
From \eqref{kernel-eq} we have that  $h$ is an element of the kernel if
$$
h_m(r)-\frac{\mu_{\Omega_m}^0(r)}{2mr^m}H_m[h_m](r)=\tilde{V}(r),\quad\hbox{with}\quad 
\tilde{V}(r):=-\frac{H_m[h_m](1)}{2nG_n(1)}\mu_{\Omega_m}^0(r) rG_n(r).
$$
According to the discussion made above, the kernel is one dimensional in both cases provided that the elements of the kernel belong to the space $\mathscr{C}_{s,m}^{1,\alpha}(\D).$ By dilation we can assume that 
\begin{align}\label{Norm-P1}
-\tfrac{H_m[h_m](1)}{G_m(1)}=\tfrac{1}{2(m+1)}
\end{align} and the kernel equation becomes 
$$
h_m(r)-\frac{\mu_{\Omega_m}^0(r)}{2mr^m}H_m[h_m](r)=\tilde{V}(r),\quad\hbox{with}\quad 
\tilde{V}(r)=\tfrac{\mu_{\Omega_m}^0(r)}{4m(m+1)} rG_m(r).
$$
For $r=1$ we should get from the normalization 
\begin{align}\label{hn-11}
h_m(1)=0.
\end{align}
By the definition \eqref{Hn-def} and introducing 
$
\displaystyle{F_m(r):=\frac{h_m(r)}{r^m\mu_{\Omega_m}^0(r)},}
$
we find that
$$
F_m(r)-\frac{1}{2m}\int_r^1 s\mu_{\Omega_m}^0(s) F_m(s)ds-\frac{1}{2mr^{2m}}\int_0^r s^{2m+1}\mu_\Omega^0(s)F_m(s)ds=\tfrac{G_m(r)}{4m(m+1)r^{m-1}},
$$
with $F_m(1)=0.$ Hence
$$
F_m''(r)+\tfrac{2m+1}{r}F_m'(r)+\mu_{\Omega_m}^0(r)F_m(r)=\tfrac{1}{4m(m+1)}r^{-2m-1}\left(r^{2m+1}\left(\tfrac{G_m(r)}{r^{m-1}}\right)'\right)'.
$$
Straightforward computations using  \eqref{Gn-def} yield after some cancellations to 
$$
F_m''(r)+\tfrac{2m+1}{r}F_m'(r)+\mu_{\Omega_m}^0(r)F_m(r)=\Omega_m-\tfrac{f_0(r)}{2}.
$$
 By applying the variation of constants method with \eqref{diff-Fn} we find two constants $c_1,c_2$ such that
\begin{align*}
F_m(r)=&c_1 F_{m,\Omega_m}(r)+c_2 F_{m,\Omega_m}(r)\int_1^r \frac{ds}{F_{m,\Omega_m}^2(s)s^{2m+1}}\\
&+F_{m,\Omega_m}(r)\int_1^r\frac{1}{F_{m,\Omega_m}^2(s)s^{2m+1}}\int_0^s F_{m,\Omega_m}(\tau)\tau^{2m+1}\left(\Omega_m-\tfrac{f_0(\tau)}{2}\right)d\tau ds.
\end{align*}
As $F_m(1)=0$ then $c_1=0$ and therefore 
\begin{align*}
F_m(r)=&F_{m,\Omega_m}(r)\int_1^r\frac{1}{F_{m,\Omega_m}^2(s)s^{2m+1}}\int_0^s F_{m,\Omega_m}(\tau)\tau^{2m+1}\left(\Omega_m-\tfrac{f_0(\tau)}{2}\right)d\tau ds\\
&+c_2 F_{m,\Omega_m}(r)\int_1^r \frac{ds}{F_{m,\Omega_m}^2(s)s^{2m+1}}\cdot
\end{align*}
Coming back to $h_m$, the integral term associated to $c_2$ is singular at zero, and therefore the continuous solution takes the form 
$$
h_m^\star(r)= r^m F_{m,\Omega_m}(r)\mu_{\Omega_m}^0(r)\int_1^r\frac{1}{F_{m,\Omega_m}^2(s)s^{2m+1}}\int_0^s F_{m,\Omega_m}(\tau)\tau^{2m+1}\left(\Omega_m-\tfrac{f_0(\tau)}{2}\right)d\tau ds.
$$
The renormalization \eqref{Norm-P1} can be written in the form 
\begin{align}\label{Ren-Hm}
-\tfrac{H_m[h_m^\star](1)}{G_m(1)}=\tfrac{1}{2(m+1)}\cdot
\end{align}

Now, we shall prove that the generator of the kernel  $h^\star(z):=h_m^\star(r)\cos(m\theta)$ belongs to $\mathscr{C}^{1,\alpha}(\D)$. Set   $F_{m,\Omega_m}(r)=\tilde{F}_{m,\Omega_m}(r^2)$ and $\mu_\Omega^0(r)=\tilde{\mu}_\Omega^0(r^2)$, and define
$$
\mathcal{G}_m(x):=\frac14\tilde{F}_{m,\Omega}(x)\tilde{\mu}^0_{\Omega_m}(x)\int_1^{x}\frac{1}{\tilde{F}_{m,\Omega_m}^2(s)s^{m+1}}\int_0^s \tilde{F}_{m,\Omega_m}(\tau)\tau^{m}\left(\Omega_m-\tfrac{\tilde{f}_0(\tau)}{2}\right)d\tau ds.
$$
We observe that $h^\star$ can be written as
$$
h^\star(z)=\textnormal{Re}\left[\mathcal{G}_m(|z|^2)z^m\right].
$$
If $\tilde{F}_{m,\Omega_m}$ is $\mathscr{C}^2$ then we find that $\mathcal{G}_m$ is $\mathscr{C}^{1,\alpha}([0,1])$ leading to  $h^\star\in\mathscr{C}^{1,\alpha}(\mathbb{D})$. 
Let us check that $\tilde{F}_{m,\Omega_m}$ is $\mathscr{C}^2$. Note that since $F_{m,\Omega_m}$ is $\mathscr{C}^2$, the only problematic point for the regularity of   $\tilde{F}_{m,\Omega_m}$ is 0. First, notice that $\tilde{F}_{m,\Omega_m}(0)=1$. Second,
{$$
\lim_{x\rightarrow 0}\tilde{F}_{m,\Omega_m}'(x)=\tfrac12 \lim_{r\rightarrow 0}\tfrac{F_{m,\Omega_m}(r)}{r}=-\tfrac{\sigma_{\Omega_m}}{2(2m+2)}\nu_{\Omega_m}(0),
$$ }
where we have used for the last inequality the ODE  \eqref{Tip1}. Third, let us compute $\lim_{x\rightarrow 0}\tilde{F}_{m,\Omega_m}''(x)$. To do so, we notice that by  standard computations  the function   $\tilde{F}_{m,\Omega_m}$ solves the  differential equation
$$
x\tilde{F}_{m,\Omega_m}''(x)+\tilde{F}_{m,\Omega_m}'(x)(m+1)+\frac{\sigma_{\Omega_m}}{4}\tilde{\nu}_{\Omega_m}(x)\tilde{F}_{m,\Omega_m}(x)=0,
$$
and then
\begin{align*}
\lim_{x\rightarrow 0}\tilde{F}_{m,\Omega_m}''(x)=&-\lim_{x\rightarrow 0}\frac{\tilde{F}_{m,\Omega_m}'(x)(m+1)+\frac{\sigma_{\Omega_m}}{4}\tilde{\nu}_{\Omega_m}(x)\tilde{F}_{m,\Omega_m}(x)}{x}\\
=-&\lim_{x\rightarrow 0}\left\{\tilde{F}_{m,\Omega_m}''(x)(m+1)+\frac{\sigma_{\Omega_m}}{4}[\tilde{\nu}_{\Omega_m}(x)\tilde{F}_{m,\Omega_m}(x)]'\right\},
\end{align*}
which implies
$$
\lim_{x\rightarrow 0}\tilde{F}_{m,\Omega_m}''(x)=\frac{-\sigma_{\Omega_m}}{4(m+2)}\left[\tilde{\nu}_{\Omega_m}'(0)-\frac{\tilde{\nu}_{\Omega_m}(0)^2\sigma_{\Omega_m}}{2(2m+1)}\right].
$$
This implies that $\tilde{F}_{m,\Omega_m}$ is of class $\mathscr{C}^2 ([0,1])$. This achieves the proof of the desired result.
\end{proof}
\section{Transversality}\label{Sec-transv}
In  Proposition \ref{prop-element-kernel} we found, according the monotonicity and the sign of the profile  $f_0$, suitable values of  some $m$ and $\Omega_m$ such that the kernel of $D_g\widehat{G}(\Omega_m,0)$ is one dimensional. Moreover, by Proposition \ref{compactop} we get that this linear operator is  Fredholm with zero index, implying in particular that the   co-dimension of its range is  $1$. In order to apply the {Crandall-Rabinowitz theorem \cite{rabinowitz:simple},} we need to check  the transversal condition, which reads 
$$
D_\Omega D_g \widehat{G}(\Omega_m,0)h^\star\notin \textnormal{Range } D_g\widehat{G}(\Omega_m,0),
$$
where $h^\star$ is the generator of the kernel defined by \eqref{kernel-generator}.
To proceed, we shall first characterize the range and later  prove that the transversal condition for both  cases $f_0>0$ and $f_0<0$.\\
Given $\alpha\in(0,1)$ and $ m\geqslant 1$, we have seen in Proposition \ref{Gwelldefined} that  the functional  $\widehat{G}$ defined \mbox{by  \eqref{densityEq} }satisfies 
$$
\widehat{G}:\R\times B_{\mathscr{C}_{s,m}^{1,\alpha}(\D)}\rightarrow \mathscr{C}^{1,\alpha}_{s,m}(\D).
$$
The first main result characterizes the range  of $D_g \widehat{G}(\Omega_m,0)$ as the kernel of a linear form.
\begin{pro}\label{Prop-Range22}
Let $m\geqslant 1,\beta\in(0,1),  \alpha\in(0,\beta)$ and $f_0$ satisfy \eqref{H2}-\eqref{H1}. Let  $\Omega_m$ as in Proposition \ref {prop-element-kernel}. Then 
$$
\textnormal{Range } D_g\widehat{G}(\Omega_m,0)=\left\{d\in\mathscr{C}^{1,\alpha}_{s,m}(\D), \quad \int_{\D} d(z)\mathscr{K}_{m}(z)dz=0\right\},
$$
where
$$
\mathscr{K}_{m}(z):=\textnormal{Re}\left[\nu_{\Omega_m}(|z|) F_{m,\Omega_m}(|z|)z^{m}\right].
$$
\end{pro}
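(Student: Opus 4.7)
The plan is to use the Fredholm-zero-index property from Proposition \ref{compactop} together with the one-dimensional kernel identified in Section \ref{sec-kernel-gen} to reduce the statement to showing that the map $\Lambda(d):=\int_\D d(z)\mathscr{K}_m(z)\,dA(z)$ is a continuous, nontrivial linear functional on $\mathscr{C}^{1,\alpha}_{s,m}(\D)$ that annihilates the range of $D_g\widehat G(\Omega_m,0)$. Since that range is closed of codimension one, the two subspaces must then coincide.

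Because $\mathscr{K}_m(re^{i\theta})=\nu_{\Omega_m}(r)F_{m,\Omega_m}(r)r^m\cos(m\theta)$ is a pure $\cos(m\theta)$-mode and any $d\in\mathscr{C}^{1,\alpha}_{s,m}(\D)$ admits the expansion $d=\sum_{n\in m\N}d_n(r)\cos(n\theta)$ of \eqref{space-1-m}, orthogonality of the trigonometric system collapses the pairing to
$$\Lambda(d)=\pi\int_0^1 d_m(r)F_{m,\Omega_m}(r)\nu_{\Omega_m}(r)r^{m+1}\,dr.$$
Continuity of $\Lambda$ is then immediate, and non-triviality follows from $\Lambda(\mathscr{K}_m)=\pi\int_0^1 F_{m,\Omega_m}^2\nu_{\Omega_m}^2 r^{2m+1}\,dr>0$, using $F_{m,\Omega_m}(0)=1$ and the positivity of $\nu_{\Omega_m}$ on $[0,1]$. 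Moreover, by the Fourier structure of the linearized operator in \eqref{lin-op}, if $d=D_g\widehat G(\Omega_m,0)[h]$ with $h=\sum h_n(r)\cos(n\theta)$ then $d_m=\mathbb{L}_m^{\Omega_m}[h_m]$, so the vanishing of $\Lambda$ on the range reduces to the scalar identity
$$\mathcal I(h_m):=\int_0^1 \mathbb{L}_m^{\Omega_m}[h_m](r)F_{m,\Omega_m}(r)\nu_{\Omega_m}(r)r^{m+1}\,dr=0,\qquad \textnormal{for every admissible } h_m.$$

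To prove this identity I would substitute the explicit expression of $\mathbb{L}_m^{\Omega_m}$ given after \eqref{lin-op} and split $\mathcal I(h_m)$ into three pieces corresponding to the pointwise $1/\mu_{\Omega_m}^0$ term, the $rG_m A_m[h_m]$ term, and the Volterra term $H_m[h_m]$. Fubini applied to the last piece rewrites it as a weighted moment of $h_m$ against an explicit kernel built from $F_{m,\Omega_m}\nu_{\Omega_m}$. Using the Sturm-Liouville identity $(r^{2m+1}F'_{m,\Omega_m})'=-\sigma_{\Omega_m}\nu_{\Omega_m}F_{m,\Omega_m}r^{2m+1}$ from \eqref{diff-Fn}, combined with two integrations by parts and with the alternative form \eqref{An-HHH} of $A_m[h_m]$, the interior contributions should cancel and $\mathcal I(h_m)$ should collapse to a multiple of
$$F_{m,\Omega_m}(1)\Bigl(\Omega_m-\tfrac{m}{m+1}\int_0^1 sf_0(s)\,ds\Bigr)+\int_0^1 F_{m,\Omega_m}(s)s^{2m+1}\bigl(f_0(s)-2\Omega_m\bigr)\,ds=\zeta_m(\Omega_m),$$
which vanishes by the choice of $\Omega_m$ in Propositions \ref{prop-omegan-low} and \ref{prop-omegam-asymp}.

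The principal obstacle is this last integration-by-parts computation: it parallels the algebra hidden in the proof of Lemma \ref{Lem-disper}, where $T(n,\Omega)=1$ was shown to be equivalent to $\zeta_n(\Omega)=0$, but here the inhomogeneity $h_m$ makes the bookkeeping of the boundary terms and their coupling with the constant $A_m[h_m]$ more delicate, and some care is needed to ensure that the residual combination of moments of $F_{m,\Omega_m}$ against $\nu_{\Omega_m}$ reassembles exactly into $\zeta_m(\Omega_m)$ rather than into some spurious linear combination of the hypergeometric-type integrals.
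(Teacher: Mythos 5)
Your proposal is correct, and it reaches the range characterization by a route that differs from the paper's in how the inclusion $\textnormal{Range}\,D_g\widehat G(\Omega_m,0)\subset\ker\Lambda$ is obtained. The paper solves $D_g\widehat G(\Omega_m,0)h=d$ mode by mode: the mode $n=0$ is handled by Proposition \ref{radialfunctions}, the nonresonant modes $n\in m\N^\star$, $n\neq m$, by inverting $\textnormal{Id}-\sigma_{\Omega_m}\mathcal{L}_n^{\Omega_m}$ (Proposition \ref{prop-Invert1}), and at the resonant mode $n=m$ the compatibility condition is extracted through the function $T(m,\Omega)$ of Lemma \ref{Lem-disper}, the self-adjointness of $\mathcal{L}_m^{\Omega_m}$ (Proposition \ref{pro-positivity}), and the identification $(\textnormal{Id}-\sigma_{\Omega_m}\mathcal{L}_m^{\Omega_m})^{-1}[\nu_{\Omega_m}r^m]=c\,\nu_{\Omega_m}r^mF_{m,\Omega_m}$; the proof then closes with the same codimension-one comparison you invoke. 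You instead verify the annihilation of the image directly, bypassing the resolvent and the self-adjointness, at the cost of redoing the integration-by-parts algebra of Lemma \ref{Lem-disper} with the inhomogeneity $h_m$. That computation does close, and your worry about a ``spurious linear combination'' is unfounded: writing $\nu_{\Omega_m}F_{m,\Omega_m}r^{2m+1}=-\sigma_{\Omega_m}\big(r^{2m+1}F_{m,\Omega_m}'\big)'$ as in \eqref{diff-Fn1}, applying Fubini to the $H_m$ term and using \eqref{An-HHH}, \eqref{Gn1-1}, \eqref{Gn-defX1}--\eqref{Gn-defX3} (the boundary terms at $r=0$ vanish because $F_{m,\Omega_m}\in\mathscr{C}^2$ with $F_{m,\Omega_m}'(r)=O(r)$ and $r^{2m+1}\mathcal H_m'(r)\to0$), the $s^{m+2}F_{m,\Omega_m}'(s)$ contributions cancel and every surviving term is a multiple of the single moment $\int_0^1 s^{m+1}h_m(s)\,ds$, yielding the clean identity
$$
\int_0^1\mathbb{L}_m^{\Omega_m}[h_m](r)\,F_{m,\Omega_m}(r)\,\nu_{\Omega_m}(r)\,r^{m+1}\,dr=\frac{\sigma_{\Omega_m}(m+1)}{G_m(1)}\,\zeta_m(\Omega_m)\int_0^1 s^{m+1}h_m(s)\,ds,
$$
which vanishes precisely because $\zeta_m(\Omega_m)=0$. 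The remaining ingredients of your scheme are sound: the codimension-one property of the (closed) range follows from Proposition \ref{compactop} together with the one-dimensional kernel of Proposition \ref{prop-element-kernel}, and the nontriviality of $\Lambda$ follows since $\nu_{\Omega_m}>0$ and $F_{m,\Omega_m}>0$ on $[0,1]$ by Lemma \ref{lem-pot1} (testing against $\textnormal{Re}(z^m)$, or against $\mathscr{K}_m$ itself, which lies in $\mathscr{C}^{1,\alpha}_{s,m}(\D)$). In short, the paper's route reuses the operator-theoretic machinery already built (and as a by-product shows how to solve the equation at all nonresonant modes), while yours trades that for a self-contained Sturm--Liouville computation that is essentially the Lemma \ref{Lem-disper} algebra run in the dual direction; both are valid.
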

\begin{proof}
In order to obtain the range of the linearized operator, we need to study the equation
$$
D_g \widehat{G}(\Omega,0)h=d, \quad h(re^{i\theta})=\sum_{n\in m\N}h_{n}(r)\cos(n\theta), \quad d(re^{i\theta})=\sum_{n\in m\N }d_{n}(r)\cos(n\theta).
$$
Due to \eqref{lin-op}, that equals to
\begin{align*}
\frac{h_{n}(r)}{\mu_\Omega^0(r)}-\frac{r}{n}\left(A_{n}[h_{n}]G_{n}(r)+\frac{1}{2r^{n+1}}H_{n}[h_{n}](r)\right)=d_{n}(r), \quad& n\in m\N^\star,\\
\frac{h_0(r)}{\mu_\Omega^0(r)}-\int_r^1\frac{1}{\tau}\int_0^\tau sh_0(s)dsd\tau=d_0(r).
\end{align*}
For the equation for $n=0$, note that $\mathbb{L}_0^\Omega$ is a compact perturbation of an isomorphism and thus it is Fredholm of zero index. Moreover, from Proposition \ref{radialfunctions}, since $\Omega_m\notin\mathbb{S}$  then the kernel is trivial and thus  the equation for $n=0$ admits a unique solution. Let us focus on the equation for $n\geqslant  1$ and work as in the preceding study for the kernel. Then similarly to \eqref{kernel-eqbis} we write
$$
(\textnormal{Id}-\sigma_\Omega \mathcal{L}_{n}^{\Omega_m})[h_{n}](r)=-\tfrac{H_{n}[h_{n}](1)}{2nG_{n}(1)}\mu_{\Omega_m}^0(r)rG_{n}(r)+\mu_{\Omega_m}^0(r)d_{n}(r).
$$
 By Proposition \ref{prop-Invert1} and the assumptions on $\Omega_m$, we have that $(\textnormal{Id}-\sigma_{\Omega_m} \mathcal{L}_{n}^{\Omega_m})$ is invertible for any $n\geqslant 1$ (note that here we need to use that $n\in m\N^\star$) leading to
$$
h_n(r)=-\tfrac{H_n[h_n](1)}{2nG_n(1)}(\textnormal{Id}-\sigma_{\Omega_m} \mathcal{L}_n^{\Omega_m})^{-1}[\mu_{\Omega_m}^0(r)rG_n(r)]+(\textnormal{Id}-\sigma_{\Omega_m} \mathcal{L}_n^{\Omega_m})^{-1}[\mu_{\Omega_m}^0 d_n](r).
$$
However, to be sure that $h_n$ is a solution we need to check the compatibility condition related to $H_n[h_n](1)$. It can be done by   multiplying the preceding identity  by $r^{n+1}$ and \mbox{integrating in $[0,1]$,}
\begin{align*}
H_n[h_n](1)&=-\tfrac{H_n[h_n](1)}{2nG_n(1)}\int_0^1 r^{n+1} (\textnormal{Id}-\sigma_{\Omega_m} \mathcal{L}_n^{\Omega_m})^{-1}[\mu_{\Omega_m}^0(r)rG_n(r)]dr\\
&\qquad +\int_0^1 r^{n+1}(\textnormal{Id}-\sigma_{\Omega_m} \mathcal{L}_n^{\Omega_m})^{-1}[\mu_{\Omega_m}^0 d_n](r)dr.
\end{align*}
Using the function $T(n,\Omega)$ defined in \eqref{TnOmega},  from Lemma \ref{Lem-disper}, we get
$$
\big(1-T(n,\Omega_m)\big)H_n[h_n](1)=\int_0^1 r^{n+1}(\textnormal{Id}-\sigma_{\Omega} \mathcal{L}_n^\Omega)^{-1}[d_n\mu_\Omega^0](r)dr.
$$
Since the kernel is one-dimensional then
$$
n\in m\N^\star, \quad 1-T(n,\Omega_m)=0\Longleftrightarrow n=m
$$
Therefore, the compatibility condition implies that
$$
\int_0^1 r^{m+1}(\textnormal{Id}-\sigma_{\Omega_m} \mathcal{L}_m^\Omega)^{-1}[\mu_{\Omega_m}^0 d_m](r)dr=0,
$$
Then using that $\mathcal{L}_{m}^{\Omega_m}$ is self-adjoint as stated in Proposition \ref{pro-positivity},  the previous identity  agrees with
$$
\int_0^1 \big((\textnormal{Id}-\sigma_{\Omega_m} \mathcal{L}_m^{\Omega_m})^{-1}[\nu_{\Omega_m} r^{m}]\big) d_m(r)r dr=0.
$$
Set $h=(\textnormal{Id}-\sigma_{\Omega_m} \mathcal{L}_m^{\Omega_m})^{-1}[\nu_{\Omega_m} r^{m}]$, then by virtue of \eqref{Eq-t1} and  \eqref{h-exp}   we find a constant $c\neq 0$ such that
$$
h(r)=c \nu_{\Omega_m}(r) r^m F_{m,\Omega_m}(r),
$$
and hence we should get  
$$
\int_0^1  \nu_{\Omega_m}(r) F_{m,\Omega_m}(r) r^{m} d_m(r) rdr=0.
$$
Introduce the function
$$
z\in\mathbb{D},\quad \mathscr{K}_{m}(z)=\textnormal{Re}\left[\nu_{\Omega_m}(|z|) F_{m,\Omega_m}(|z|)z^{m}\right].
$$
Then, straightforward calculus based on  polar coordinates and  Fourier expansion allows to write   the compatibility condition in the form
$$
\int_{\D} d(z)\mathscr{K}_{m}(z)dz=0.
$$
It follows that
$$
\textnormal{Range } D_g\widehat{G}(\Omega_m,0)\subset \left\{d\in\mathscr{C}^{1,\alpha}_{s,m}(\D), \quad \int_{\D} d(z)\mathscr{K}_{m}(z)dz=0\right\}:=\mathcal{E}.
$$
Since $\mathscr{K}_{m}\in \mathscr{C}^{1,\alpha}_{s,m}(\D)$ (a weak regularity is  enough for the result), then the mapping 
$$d\in\mathscr{C}^{1,\alpha}_{s,m}(\D)\mapsto \int_{\D} d(z)\mathscr{K}_{m}(z)dz\in\R
$$
is continuous and therefore its kernel $\mathcal{E}$ defines a hyperplane (a closed space of co-dimension one). Combining this with the fact that $\textnormal{Range } D_g\widehat{G}(\Omega_m,0)$ is a closed subspace  of $ \mathscr{C}^{1,\alpha}_{s,m}(\D)$ of co-dimension one, we get the equality between the two subspaces. 
 This concludes the proof.
\end{proof}
The next goal is to write down an analytical equation for the transversality assumption in Crandall-Rabinowitz theorem \cite{rabinowitz:simple}. For this aim, we need to introduce the following function
$$
d^\star_m(r):=h_m^\star(r)\tfrac{r}{f_0'(r)}+\tfrac{A_m[h_m^\star]}{G_m(1)}rG_m(r)-A_m[h_m^\star] r^{m+2},
$$
where $h^\star_m$ is defined in Proposition \ref{prop-element-kernel} and $A_m[h]$ is defined by \eqref{An-HHH}. Therefore, we get by \eqref{Ren-Hm},
\begin{align}\label{dn-simple}
d^\star_m(r)=h_m^\star(r)\tfrac{r}{f_0'(r)}+\tfrac{1}{4(m+1)G_m(1)}rG_m(r)-\tfrac{1}{4(m+1)} r^{m+2},
\end{align}
We intend to prove the following result.
\begin{pro}
Let $m\geqslant 1,\beta\in(0,1),  \alpha\in(0,\beta)$ and $f_0$ satisfy \eqref{H2}-\eqref{H1}. Let  $\Omega_m$ as in Proposition \ref {prop-element-kernel}. Then, the transversal condition agrees with
$$
\int_0^1 \nu_{\Omega_m}(r) r^m F_{m,\Omega_m}(r) d_m^\star(r)r dr\neq 0.
$$
\end{pro}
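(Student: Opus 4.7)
The plan is to combine the range description given by Proposition \ref{Prop-Range22} with an explicit computation of $D_\Omega D_g\widehat{G}(\Omega_m,0)[h^\star]$ obtained from the Fourier expansion \eqref{lin-op}. By Proposition \ref{Prop-Range22}, the transversality condition
$$
D_\Omega D_g\widehat{G}(\Omega_m,0)[h^\star]\notin \textnormal{Range } D_g\widehat{G}(\Omega_m,0)
$$
is equivalent to
$$
\int_{\D} D_\Omega D_g\widehat{G}(\Omega_m,0)[h^\star](z)\,\mathscr{K}_m(z)\,dA(z)\neq 0,
$$
so the entire proof reduces to computing this pairing and identifying it with the stated integral.

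The first step is to differentiate the one-dimensional operator $\mathbb{L}_m^\Omega$ of \eqref{lin-op} in $\Omega$. Only three of its ingredients depend on $\Omega$: the coefficient $1/\mu_\Omega^0$, the function $G_m$ (through the term $m\Omega r^{m+1}$ in \eqref{Gn-def}), and the constant $A_m[h]$ (through $\widehat\Omega_m-\Omega$ in \eqref{An}). A direct calculation using \eqref{mutrivial} gives
$$
\partial_\Omega\!\left(\tfrac{1}{\mu_\Omega^0(r)}\right)=\tfrac{r}{f_0'(r)},\quad \partial_\Omega G_m(r)=mr^{m+1},
$$
and \eqref{Gn1-1} combined with \eqref{An} yields $\partial_\Omega A_m[h]=-\tfrac{m\,A_m[h]}{G_m(1)}$. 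Inserting these three derivatives and simplifying leads to
$$
\partial_\Omega\mathbb{L}_m^\Omega[h](r)=h(r)\tfrac{r}{f_0'(r)}+\tfrac{A_m[h]}{G_m(1)}\,rG_m(r)-A_m[h]\,r^{m+2},
$$
which, once evaluated at $h=h_m^\star$ and $\Omega=\Omega_m$, is exactly $d_m^\star(r)$ thanks to the renormalization \eqref{Ren-Hm} and the definition \eqref{dn-simple}. Since the generator $h^\star$ is supported only on the $m$-th angular Fourier mode, the decomposition \eqref{lin-op} then gives
$$
D_\Omega D_g\widehat{G}(\Omega_m,0)[h^\star](re^{i\theta})=d_m^\star(r)\cos(m\theta).
$$

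The final step is a polar pairing. With $\mathscr{K}_m(re^{i\theta})=\nu_{\Omega_m}(r)F_{m,\Omega_m}(r)r^m\cos(m\theta)$ and $\int_0^{2\pi}\cos^2(m\theta)\,d\theta=\pi$, the integral collapses to
$$
\int_{\D} d_m^\star(|z|)\cos(m\arg z)\,\mathscr{K}_m(z)\,dA(z)=\pi\int_0^1 \nu_{\Omega_m}(r)\,r^m\,F_{m,\Omega_m}(r)\,d_m^\star(r)\,r\,dr,
$$
so non-vanishing of the right-hand side is equivalent to transversality. The only technical care required is legitimating the term-by-term differentiation of \eqref{lin-op} in $\Omega$, which follows from the smoothness of $\Omega\mapsto\mathbb{L}_m^\Omega$ guaranteed in the Fredholm setup of Proposition \ref{compactop}, valid here because $\Omega_m$ is chosen outside $[\kappa_1,\kappa_2]\cup\mathcal{S}^m_{\textnormal{sing}}\cup\mathbb{S}$ by Section \ref{sec-kernel-gen}. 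Consequently no genuine obstacle arises beyond bookkeeping the three $\Omega$-derivatives identified above.
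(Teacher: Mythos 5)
Your proposal is correct and follows essentially the same route as the paper: differentiate the Fourier-mode expression \eqref{lin-op} in $\Omega$ (only $1/\mu_\Omega^0$, the term $n\Omega r^{n+1}$ in $G_n$, and $A_n[h]$ via \eqref{An}--\eqref{Gn1-1} depend on $\Omega$), identify $D_\Omega D_g\widehat{G}(\Omega_m,0)[h^\star]=d_m^\star(r)\cos(m\theta)$, and then invoke the range characterization of Proposition \ref{Prop-Range22} and a polar-coordinate pairing against $\mathscr{K}_m$ to reduce transversality to the stated one-dimensional integral. Your version merely spells out the three $\Omega$-derivatives and the factor $\pi$ from the angular integration, which the paper leaves implicit.
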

\begin{proof}
Note that from the expression of $D_g \widehat{G}(\Omega,0)$ described by \eqref{lin-op}, one finds
\begin{align*}
D_\Omega D_g\widehat{G}(\Omega,0)h(r,\theta)&=\sum_{n\in m\N^\star}\cos(n\theta)\left\{h_{n}(r)\tfrac{r}{f_0'(r)}+\tfrac{A_{n}[h_{n}]}{G_{n}(1)}rG_{n}(r)-A_{n}[h_n] r^{n+2}\right\}\\
&+{h_{0}(r)\tfrac{r}{f_0'(r)}}\cdot
\end{align*}
Then, we deduce in view of the definition of $d_m^\star$, whose simplified form is given by  \eqref{dn-simple},
$$
D_\Omega D_g\widehat{G}(\Omega_m,0)h^\star(r,\theta)=d_m^\star(r)\cos(m\theta).
$$
To check the transversal condition we need to prove that
$$
d^\star_m(r)\cos(m\theta)\notin \textnormal{Range}( D_g\widehat{G}(\Omega,0)).
$$
Applying Proposition \ref{Prop-Range22}, it agrees with 
\begin{align}\label{Trans-V1}
 \mathbb{I}_m:=\int_0^1 \nu_{\Omega_m}(r) r^m F_{m,\Omega_m}(r) d_m^\star(r)r dr\neq 0,
\end{align}
and this achieves  the proof of the desired result.
\end{proof}
Coming back to \eqref{dn-simple} and using the expression of  $h^\star_m$ in Proposition \ref{prop-element-kernel}
\begin{align*}
4(m+1)d_m^\star(r)=&r^m\Big[\tfrac{4(m+1)r}{f_0'(r)} F_{m,\Omega_m}(r)\mu_{\Omega_m}^0(r)\\
&\times\int_1^r\tfrac{1}{F_{m,\Omega_m}^2(s)s^{2m+1}}\int_0^s F_{m,\Omega_m}(\tau)\tau^{2n+1}\left\{\Omega_m-\tfrac{f_0(\tau)}{2}\right\}d\tau ds\\
&-r^2+\tfrac{r^{1-m}G_m(r)}{G_m(1)}\Big]\\
=:&r^m\Big[\mathcal{H}_{m,1}(r)+\mathcal{H}_{m,2}(r)+\mathcal{H}_{m,3}(r)\Big],
\end{align*}
with
\begin{align}\label{Hm-Tra1}
\nonumber \mathcal{H}_{m,1}(r)=&-\tfrac{4(m+1)r\mu^0_{\Omega_m}(r)}{f_0'(r)} F_{m,\Omega_m}(r)\int_r^1\tfrac{1}{F_{m,\Omega_m}^2(s)s^{2m+1}}\int_0^s F_{m,\Omega_m}(\tau)\tau^{2n+1}\left\{\Omega_m-\tfrac{f_0(\tau)}{2}\right\}d\tau ds,\\
\mathcal{H}_{m,2}(r)=&-r^2,\quad \mathcal{H}_{m,3}(r)=\tfrac{r^{1-m}G_m(r)}{G_m(1)}.
\end{align}
Then \eqref{Trans-V1} is equivalent to 
\begin{align}\label{Trans-V2}
\nonumber \mathbb{I}_m&= \int_0^1 \nu_{\Omega_m}(r) r^{2m+1} F_{m,\Omega_m}(r) \big(\mathcal{H}_{m,1}(r)+\mathcal{H}_{m,2}(r)+\mathcal{H}_{m,3}(r)\big) dr\\
&=: \mathbb{I}_{m,1}+\mathbb{I}_{m,2}+\mathbb{I}_{m,3}\neq 0.
\end{align}
Let us start with the case $f_0>0$ associated with  lower values of  $m$.
\begin{pro}\label{prop-trans-low}
Let $f_0>0$ satisfy \eqref{H2}--\eqref{H1}. Fix  $m$ such that
$$
3\leqslant m\leqslant \tfrac{1}{10}\tfrac{f_0(0)}{f_0(1)-f_0(0)},
$$ 
and $\Omega_m$  as in Proposition $\ref{prop-omegan-low},$ then \, $\mathbb{I}_m\neq 0$.
\end{pro}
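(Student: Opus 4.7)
The plan is to analyze the three contributions $\mathbb{I}_{m,1},\mathbb{I}_{m,2},\mathbb{I}_{m,3}$ appearing in \eqref{Trans-V2} separately, extract their signs from the structural hypothesis $f_0>0$ together with the precise location $\Omega_m\in(\widehat\Omega_m,\tfrac{m\kappa_2}{m+1})\subset(-\infty,\kappa_1)$ provided by Proposition \ref{prop-omegan-low}-(4), and then exploit the \emph{near--constant profile} bound $m\leqslant \tfrac{1}{10}\tfrac{f_0(0)}{f_0(1)-f_0(0)}$ to prevent the three terms from cancelling. Throughout we use that in the defocusing regime one has $\sigma_{\Omega_m}=-1$, $\mu^0_{\Omega_m}=-\nu_{\Omega_m}<0$, and $F_{m,\Omega_m}\geqslant 1$ is increasing by Lemma \ref{lem-pot1}-(1).

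\textbf{Step 1 (signs).} Since $f_0$ is strictly increasing and $\Omega_m<\kappa_1=\tfrac12 f_0(0)$, we have $\Omega_m-\tfrac{f_0(\tau)}{2}<0$ on $[0,1]$, hence the inner integral in \eqref{Hm-Tra1} is negative and the outer integral from $r$ to $1$ is negative; combining with the positive prefactor (since $\tfrac{r\mu^0_{\Omega_m}}{f_0'}=\tfrac{-1}{(-\Omega_m)+\int_0^1 sf_0(s r)ds}<0$ is negated by the overall minus sign) yields $\mathcal{H}_{m,1}(r)<0$. Trivially $\mathcal{H}_{m,2}=-r^2<0$. For $\mathcal{H}_{m,3}$, recall $\mathcal{H}_m(r)=r^{1-m}G_m(r)$ satisfies \eqref{Gn-defX3}: $(r^{2m+1}\mathcal{H}_m'(r))'=2m(m+1)r^{2m+1}(2\Omega_m-f_0(r))<0$, so $r^{2m+1}\mathcal{H}_m'(r)$ is nonincreasing with value $0$ at $r=0$, hence $\mathcal{H}_m$ is decreasing on $[0,1]$; combined with $\mathcal{H}_m(1)=G_m(1)=m(\Omega_m-\widehat\Omega_m)>0$ this gives $\mathcal{H}_m(r)\geqslant G_m(1)>0$ and therefore $\mathcal{H}_{m,3}(r)=\mathcal{H}_m(r)/G_m(1)\geqslant 1$ on $[0,1]$.

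\textbf{Step 2 (regrouping).} The preceding signs show $\mathbb{I}_{m,1}<0$, $\mathbb{I}_{m,2}<0$ and $\mathbb{I}_{m,3}>0$. Pair the last two into
\[
\mathbb{I}_{m,2}+\mathbb{I}_{m,3}=\int_0^1\nu_{\Omega_m}(r)\,r^{2m+1}F_{m,\Omega_m}(r)\Big(\tfrac{\mathcal{H}_m(r)}{G_m(1)}-r^2\Big)dr.
\]
Since $\mathcal{H}_m/G_m(1)\geqslant 1\geqslant r^2$ with strict inequality off a set of zero measure, the integrand is nonnegative and not identically zero, so $\mathbb{I}_{m,2}+\mathbb{I}_{m,3}>0$. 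Thus the question reduces to comparing $|\mathbb{I}_{m,1}|$ with $\mathbb{I}_{m,2}+\mathbb{I}_{m,3}$.

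\textbf{Step 3 (control of $\mathbb{I}_{m,1}$).} Write the double integral inside $\mathcal{H}_{m,1}$ as $\tilde J(r)=-J(r)>0$. Using $|\Omega_m-\tfrac12 f_0(\tau)|\leqslant \tfrac12 f_0(1)-\Omega_m$, the monotonicity of $F_{m,\Omega_m}$ and $F_{m,\Omega_m}\geqslant 1$, a direct estimate gives
\[
\tilde J(r)\leqslant \tfrac{(f_0(1)/2-\Omega_m)(1-r^2)}{4(m+1)}.
\]
Next, using $\tfrac{r\mu^0_{\Omega_m}}{f_0'}=\tfrac{-1}{P(r)}$ with $P(r):=\int_0^1 sf_0(sr)ds-\Omega_m\geqslant \kappa_1-\Omega_m$, one derives
\[
|\mathcal{H}_{m,1}(r)|\leqslant \tfrac{F_{m,\Omega_m}(r)(f_0(1)/2-\Omega_m)(1-r^2)}{(m+1)P(r)},
\]
and hence bounds $|\mathbb{I}_{m,1}|$ by an integral involving only $F_{m,\Omega_m}$, $\nu_{\Omega_m}$, $r^{2m+1}$ and the small factor $(f_0(1)/2-\Omega_m)/(m+1)$.

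\textbf{Step 4 (closing the estimate via the hypothesis on $m$).} The smallness assumption $10\,m(A[f_0]-1)\leqslant 1$ is equivalent to $f_0(1)-f_0(0)\leqslant \tfrac{f_0(0)}{10m}$. Combining this with the location $\Omega_m\in(\widehat\Omega_m,\tfrac{m\kappa_2}{m+1})$ (and the explicit bounds $\widehat\Omega_m\geqslant \kappa_2-\tfrac{f_0(1)}{2m}$, $\kappa_2\leqslant \kappa_1+\tfrac{f_0(1)-f_0(0)}{2}$), one checks that $f_0(1)/2-\Omega_m = O(f_0(0)/m)$ while $\kappa_1-\Omega_m\gtrsim \kappa_1/m$; this forces $(f_0(1)/2-\Omega_m)/P(r)$ to be uniformly bounded, with a constant strictly less than $1$ thanks to the factor $\tfrac{1}{10}$. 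A symmetric lower estimate on $\mathbb{I}_{m,2}+\mathbb{I}_{m,3}$, using $\mathcal{H}_m(r)\geqslant G_m(1)$ and $F_{m,\Omega_m}\geqslant 1$, then shows $\mathbb{I}_{m,2}+\mathbb{I}_{m,3}$ dominates $|\mathbb{I}_{m,1}|$, so $\mathbb{I}_m>0$ and transversality holds.

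The main obstacle is Step 3/4: the double-integral structure of $\mathcal{H}_{m,1}$ mixes $F_{m,\Omega_m}$, $\nu_{\Omega_m}$ and the sign-changing normalization $\tfrac{r\mu^0_{\Omega_m}}{f_0'}$, and the constant $\tfrac{1}{10}$ must be tracked carefully so that the bound on $|\mathbb{I}_{m,1}|$ is strictly beaten by the lower estimate of $\mathbb{I}_{m,2}+\mathbb{I}_{m,3}$; in particular one must avoid losing powers of $m$ when estimating the ratio $(f_0(1)/2-\Omega_m)/(\kappa_1-\Omega_m)$, which is the delicate point where the hypothesis $m\leqslant \tfrac{1}{10}(A[f_0]-1)^{-1}$ is used in an essential way.
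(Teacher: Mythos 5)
Your Steps 1--3 essentially reproduce the paper's strategy (sign of $\mathcal{H}_{m,1}$, positivity of $\mathcal{H}_{m,2}+\mathcal{H}_{m,3}$, and the bound on the double integral using the monotonicity of $F_{m,\Omega_m}$ and $F_{m,\Omega_m}\geqslant1$), and your monotonicity argument for $\mathcal{H}_m$ via \eqref{Gn-defX3} is a correct alternative to the paper's explicit identity for $r^{1-m}G_m(r)-r^2G_m(1)$. But Step 4, which is the quantitative heart of the proposition, does not close as written. First, a bookkeeping slip: the prefactor $4(m+1)$ in \eqref{Hm-Tra1} exactly cancels the $\tfrac{1}{4(m+1)}$ coming from your estimate of the double integral, so the correct bound is $|\mathcal{H}_{m,1}(r)|\leqslant \tfrac{(f_0(1)/2-\Omega_m)(1-r^2)}{P(r)}$ with \emph{no} extra $(m+1)$ in the denominator; your spurious factor $\tfrac{1}{m+1}$ is what makes the comparison look easy. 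Second, and more seriously, the key claim that $(f_0(1)/2-\Omega_m)/P(r)$ is uniformly bounded by a constant strictly less than $1$ is false: since $f_0$ is strictly increasing, $P(r)=\int_0^1 sf_0(rs)\,ds-\Omega_m\leqslant \kappa_2-\Omega_m<\tfrac12 f_0(1)-\Omega_m$ for \emph{every} $r\in[0,1]$, so this ratio exceeds $1$ everywhere (and can be of size $\approx 1.5$--$1.7$ near $r=0$ for $m$ in the allowed range). Consequently your crude lower bound $\mathcal{H}_{m,3}\geqslant 1$, i.e.\ $\mathbb{I}_{m,2}+\mathbb{I}_{m,3}\geqslant\int_0^1\nu_{\Omega_m}r^{2m+1}F_{m,\Omega_m}(1-r^2)\,dr$, cannot dominate $|\mathbb{I}_{m,1}|$: the negative contribution is genuinely of the same order $(1-r^2)$ with a relative constant larger than $1$, not $o(1)$.

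What is missing is the quantitative gain that the paper extracts from $\mathcal{H}_{m,2}+\mathcal{H}_{m,3}$: using the identity
\begin{align*}
r^{1-m}G_m(r)-r^2G_m(1)=(1-r^2)\int_0^1 sf_0(s)\,ds+(m+1)r^2\int_0^1 s(1-s^{2m})\big(f_0(s)-f_0(sr)\big)\,ds
\end{align*}
together with $G_m(1)=m(\Omega_m-\widehat{\Omega}_m)$ from \eqref{Gn1-1}, one gets $\mathcal{H}_{m,2}+\mathcal{H}_{m,3}\geqslant \tfrac{(1-r^2)\kappa_2}{m(\Omega_m-\widehat{\Omega}_m)}$, and the denominator is \emph{small} because $\Omega_m$ is squeezed in $(\widehat{\Omega}_m,\tfrac{m\kappa_2}{m+1})$; this makes the positive part of size roughly $(m+1)(1-r^2)$ rather than $(1-r^2)$. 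The transversality then reduces to the inequality $2(\kappa_1-\Omega_m)\kappa_2>m(\Omega_m-\widehat{\Omega}_m)\big(f_0(1)-2\Omega_m\big)$ (the paper's \eqref{coupe0}), and it is precisely here that $m\geqslant 3$ and $m\leqslant\tfrac{1}{10}\tfrac{f_0(0)}{f_0(1)-f_0(0)}$ are used, after elementary but careful estimates of $\Omega_m-\widehat\Omega_m$, $f_0(1)-2\Omega_m$ and $\kappa_1-\tfrac{m}{m+1}\kappa_2$. Your asymptotics $f_0(1)/2-\Omega_m=O(f_0(0)/m)$ and $\kappa_1-\Omega_m\gtrsim\kappa_1/m$ are correct, but they only give a ratio of order $1$, not less than $1$; the argument can be repaired by replacing your crude bound $\mathcal{H}_{m,3}\geqslant1$ with the identity above, at which point you are carrying out the paper's proof.
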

\begin{proof}
Since ${\Omega_m}\in(-\infty,\kappa_1)$ and by \eqref{nu-mu} we get that ${\Omega_m}-\frac{f_0(\tau)}{2}<0$ and $\mu^0_{\Omega_m}(r)\leqslant 0$ obtaining that $\mathcal{H}_{m,1}\leqslant 0$. Let us check the sign of $\mathcal{H}_{m,2}+\mathcal{H}_{m,3}$, which takes the form in view of \eqref{Gn1-1}
\begin{align*}
\mathcal{H}_{m,2}(r)+\mathcal{H}_{m,3}(r)=&\tfrac{r^{1-m}G_m(r)-r^2G_m(1)}{G_m(1)}\\
=&\tfrac{r^{1-m}G_m(r)-r^2G_m(1)}{m({\Omega_m}-\widehat{{\Omega}}_m)}\cdot
\end{align*}
By the definition \eqref{Gn-def} we infer that
\begin{align*}
r^{1-m}G_m(r)-r^2G_m(1)=&(1-r^2)\int_0^1sf_0(s)ds-(m+1)\int_0^r sf_0(s)ds+\frac{m+1}{r^{2m}}\int_0^r s^{2m+1}f_0(s)ds\\
&+(m+1)r^2\int_0^1 sf_0(s)ds-(m+1)r^2\int_0^1s^{2m+1}f_0(s)ds.
\end{align*}
Thus, making the change of variable $s=r \tau$ allows to get
\begin{align*}
r^{1-m}G_m(r)-&r^2G_m(1)=(1-r^2)\int_0^1sf_0(s)ds\\
&+(m+1)\left\{-r^2\int_0^1 sf_0(sr)ds+r^2\int_0^1 s^{2m+1}f_0(sr)ds+r^2\int_0^1 sf_0(s)ds\right.\\
&\left.-r^2\int_0^1s^{2m+1}f_0(s)ds\right\}\\
&=(1-r^2)\int_0^1sf_0(s)ds+(m+1)r^2\int_0^1 s(1-s^{2m})(f_0(s)-f_0(sr))ds.
\end{align*}
As $f_0$ increases, we find
\begin{align*}
\forall r\in[0,1),\quad r^{1-m}G_m(r)-r^2G_m(1)
&> 0,
\end{align*}
Applying  Proposition \ref{prop-omegan-low}-$(4)$ we find that $\Omega_m-\widehat{{\Omega}}_m> 0$ and then
$$
\forall r\in[0,1),\quad \mathcal{H}_{m,2}(r)+\mathcal{H}_{m,3}(r)> 0.
$$
This shows  competition of sign between $\mathcal{H}_{m,1}$ and $\mathcal{H}_{m,2}+\mathcal{H}_{m,3}$ and we will check that their sum is positive. Indeed, we can write
\begin{align*}
&\mathcal{H}_{m,1}+\mathcal{H}_{m,2}+\mathcal{H}_{m,3}=\frac{(1-r^2)\int_0^1sf_0(s)ds+(m+1)r^2\int_0^1 s(1-s^{2m})(f_0(s)-f_0(sr))ds}{m(\Omega_m-\widehat{\Omega}_m)}\\
&+\tfrac{2(m+1)r\mu_\Omega^0(r)}{f_0'(r)} F_{m,\Omega_m}(r)\int_r^1\tfrac{1}{F_{m,\Omega_m}^2(s)s^{2m+1}}\int_0^s F_{m,\Omega_m}(\tau)\tau^{2m+1}\left\{{f_0(\tau)}-2\Omega_m\right\}d\tau ds.
\end{align*}
According to  Lemma \ref{lem-pot1}-(1) and \eqref{diff-Fn} we deduce that $F_{m,\Omega_m}$ is increasing. Then
combined with $\frac{f_0(\tau)}{2}-\Omega_m>0$, we obtain by the monotonicity of $f_0$ 
\begin{align*}
&\int_r^1\tfrac{1}{F_{m,\Omega_m}^2(s)s^{2m+1}}\int_0^s F_{m,\Omega_m}(\tau)\tau^{2m+1}\big(f_0(\tau)-2\Omega_m\big)d\tau ds\\
&\leqslant  \int_r^1\frac{1}{s^{2m+1}}\int_0^s \tau^{2m+1}\left\{f_0(\tau)-2\Omega_m\right\}d\tau ds\\
&\leqslant \frac{1-r^2}{4(m+1)}\big(f_0(1)-2\Omega_m\big).
\end{align*}
Using  $\mu^0_{\Omega_m}(r)\leqslant 0$ together with
\begin{align*}
\frac{r\mu_\Omega^0(r)}{f_0'(r)}=\frac{1}{\Omega-\int_0^1sf_0(rs)ds}\geqslant\frac{1}{\Omega-\kappa_1},\quad \forall\,  r\in  (0,1],
\end{align*}
 give
\begin{align*}
&\tfrac{2(m+1)r\mu_\Omega^0(r)}{f_0'(r)} F_{m,\Omega_m}(r)\int_r^1\tfrac{1}{F_{m,\Omega_m}^2(s)s^{2m+1}}\int_0^s F_{m,\Omega_m}(\tau)\tau^{2m+1}\big({f_0(\tau)}-2\Omega_m\big)d\tau ds\\
&\geqslant \frac{1-r^2}{2(\Omega-\kappa_1)}\big(f_0(1)-2\Omega_m\big).
\end{align*}
Consequently,  we get for any $r\in(0,1)$
\begin{align*}
\mathcal{H}_{m,1}+\mathcal{H}_{m,2}+\mathcal{H}_{m,3}
>& \frac{(1-r^2)\int_0^1sf_0(s)ds}{m(\Omega_m-\widehat{\Omega}_m)}-\frac{1-r^2}{2(\kappa_1-\Omega)}\big(f_0(1)-2\Omega_m\big)\\
>&(1-r^2)\left(\frac{\kappa_2}{m(\Omega_m-\widehat{\Omega}_m)}-\frac{f_0(1)-2\Omega_m}{2(\kappa_1-\Omega_m)} \right)=:(1-r^2)D_m.
\end{align*}
It follows that
\begin{align}\label{coupe0}
D_m>0\Longleftrightarrow 2(\kappa_1-\Omega_m)\kappa_2>m(\Omega_m-\widehat{\Omega}_m)\big(f_0(1)-2\Omega_m\big).
\end{align}
From \eqref{Interv1}  and the monotonicity of $f_0$ we infer that
$$
\widehat{\Omega}_m\geqslant \kappa_2-\frac{f_0(1)}{2m},
$$
which implies using  Proposition \ref{prop-omegan-low}-(4) 
\begin{align*}
\Omega_m-\widehat{\Omega}_m&\leqslant \frac{m\kappa_2}{m+1}-\kappa_2+\frac{f_0(1)}{2m}\\
&\leqslant \frac{f_0(1)}{2m}-\frac{\kappa_2}{m+1}
\end{align*}
and
\begin{align*}
0\leqslant f_0(1)-2\Omega_m&\leqslant f_0(1)-2\widehat\Omega_m\\
&\leqslant \tfrac{m+1}{m}f_0(1)- 2\kappa_2.
\end{align*}
Thus
\begin{align}\label{coupe1}
m(\Omega_m-\widehat{\Omega}_m)\big(f_0(1)-2\Omega_m\big)&\leqslant   \tfrac{m+1}{2m}\big(f_0(1)-\tfrac{2m}{m+1} \kappa_2\big)^2.
\end{align}
On the other hand
\begin{align*}
(\kappa_1-\Omega_m)\geqslant \kappa_1-\tfrac{m}{m+1}\kappa_2,
\end{align*}
and therefore
\begin{align}\label{coupe2}
2(\kappa_1-\Omega_m)\kappa_2\geqslant 2\big(\kappa_1-\tfrac{m}{m+1}\kappa_2\big)\kappa_2.
\end{align} 
Moreover, under the assumption 
\begin{align}\label{coupe6}2\leqslant 2m\leqslant \frac{\kappa_1}{\kappa_2-\kappa_1},
\end{align} we find
\begin{align*}
\big(\kappa_1-\tfrac{m}{m+1}\kappa_2\big)&\geqslant \big(\tfrac{2m}{2m+1}-\tfrac{m}{m+1}\big)\kappa_2{\geqslant}\frac{m\kappa_2}{(m+1)(2m+1)}.
\end{align*}
Inserting this inequality into \eqref{coupe2} we get
\begin{align}\label{coupe3}
2(\kappa_1-\Omega_m)\kappa_2&\geqslant \tfrac{2m}{(m+1)(2m+1)}\kappa_2^2.
\end{align} 
Hence, putting together \eqref{coupe1} and \eqref{coupe2} we deduce that  \eqref{coupe0} holds true if 
\begin{align*}
\tfrac{2m}{(m+1)(2m+1)}\kappa_2^2> \tfrac{m+1}{2m}\big(f_0(1)-\tfrac{2m}{m+1} \kappa_2\big)^2.
\end{align*} 
Now, define $\delta:=f_0(1)-2\kappa_2>0$ then the preceding inequality is equivalent to
\begin{align*}
\kappa_2^2> \tfrac{(m+1)^2(2m+1)}{{4m^2}}\big(\delta+\tfrac{2}{m+1} \kappa_2\big)^2,
\end{align*} 
or
\begin{align}\label{coupe5}
\big(1-\tfrac{\sqrt{2m+1}}{m}\big)\kappa_2> \tfrac{(m+1)\sqrt{2m+1}}{2m}\delta.
\end{align} 
Since
\begin{align*}
1-\tfrac{\sqrt{2m+1}}{m}=\tfrac{m^2-2m-1}{m(m+\sqrt{2m+1})}
\end{align*} 
and  the function $x\in(0,\infty)\mapsto \tfrac{x^2-2x-1}{x(x+\sqrt{2x+1})}$ is strictly increasing, then we deduce that
$$
\forall \, m\geqslant 3,\quad \tfrac{m^2-2m-1}{m(m+\sqrt{2m+1})}\geqslant \tfrac{2}{3(3+\sqrt{7})}.
$$
Similarly the function  $x\in(0,\infty)\mapsto  \tfrac{(x+1)\sqrt{2x+1}}{2x^2}$ is strictly decreasing and therefore
$$
\forall \, m\geqslant 3,\quad \tfrac{(m+1)\sqrt{2m+1}}{2m^2}\leqslant \tfrac{2\sqrt{7}}{9}.
$$
It follows that \eqref{coupe5} holds true if
$$
m\delta<\tfrac{3}{\sqrt{7}(3+\sqrt{7})} \kappa_2,
$$
and since $\tfrac{3}{\sqrt{7}(3+\sqrt{7})}>\frac15$ then the previous inequality holds true if
$$
3\leqslant m\leqslant \frac{\kappa_2}{5\delta}.
$$
Coming back to \eqref{coupe6} we get \eqref{coupe0} provided that
$$
3\leqslant m\leqslant \min\left(\frac{\kappa_2}{5\delta},\frac{\kappa_1}{2(\kappa_2-\kappa_1)}\right).
$$
As $\frac{f_0(0)}{2}=\kappa_1\leqslant\kappa_2,$ then the previous inequality occurs if
\begin{align}\label{coupe7}
3\leqslant m\leqslant \tfrac15\kappa_1\min\left(\tfrac{1}{f_0(1)-2\kappa_2},\tfrac{1}{2\kappa_2-f_0(0)}\right).
\end{align}
Remark that by the monotonicity of $f_0$ we get 
\begin{align*}
f_0(1)-2\kappa_2=2\int_0^1s(f_0(1)-f_0(s))ds\leqslant f_0(1)-f_0(0),
\end{align*}
and
\begin{align*}
{2\kappa_2-f_0(0)}=2\int_0^1s(f_0(s)-f_0(0))ds\leqslant f_0(1)-f_0(0).
\end{align*}
Thus,  to get \eqref{coupe7}, it is enough to impose
\begin{align*}
3\leqslant m\leqslant \frac{1}{10}\frac{f_0(0)}{f_0(1)-f_0(0)}\cdot
\end{align*}
This ends the proof of Proposition \ref{prop-trans-low}.
\end{proof}
Let us now move  to the transversality condition when $f_0<0$.
\begin{pro}
Let $f_0$ satisfy \eqref{H2}--\eqref{H1} with $f_0<0$, and take $m$ and $\Omega_m$ as  in Proposition $\ref{prop-omegam-asymp}. $
There exists $m_0\geqslant 1$ such that for any $m\geqslant m_0$ we get \, $\mathbb{I}_m\neq 0.$
\end{pro}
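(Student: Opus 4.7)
My plan is to prove $\mathbb{I}_m\neq0$ for large $m$ via asymptotic analysis, exploiting three facts from the focusing construction: $\Omega_m-\kappa_2\sim \tfrac{|f_0(1)|}{2m}$ is positive and of order $m^{-1}$, $|\Omega_m-\widehat\Omega_m|\leqslant m^{-\alpha}$ with $\alpha\in(1,2)$ so that $G_m(1)=m(\Omega_m-\widehat\Omega_m)$ is small, and $F_{m,\Omega_m}=1+O(m^{\theta-1})$ uniformly on $[0,1]$ for any fixed $\theta\in(0,1)$ by Lemma \ref{lem-pot1}-(2). Because $F_{m,\Omega_m}\to 1$ and $\nu_{\Omega_m}$ is positive, the leading behaviour of the three integrals $\mathbb{I}_{m,k}$ should be extractable by replacing $F_{m,\Omega_m}$ by $1$ and then performing careful Laplace-type estimates on integrals weighted by $r^{2m+1}$, which concentrate near $r=1$.

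The first step is to rewrite $\mathbb{I}_{m,1}$ by interchanging orders of integration: the triple integral structure in $\mathcal H_{m,1}$ becomes, via Fubini and the identity $(r^{2m+1}F_{m,\Omega_m}')'=-r^{2m+1}\nu_{\Omega_m}F_{m,\Omega_m}$, a combination of boundary terms at $r=1$ and integrals of the same flavour as $\mathbb{I}_{m,2}$ and $\mathbb{I}_{m,3}$. In particular, integration by parts should produce a factor $\int_0^s F_{m,\Omega_m}(\tau)\tau^{2m+1}(\Omega_m-\tfrac{f_0(\tau)}{2})d\tau$ evaluated at $s=1$, which by the dispersion relation $\zeta_m(\Omega_m)=0$ equals
\[
\tfrac{1}{2}F_{m,\Omega_m}(1)\bigl(\Omega_m-\tfrac{m}{m+1}\kappa_2\bigr),
\]
a quantity bounded away from zero for large $m$ since $\Omega_m\to\kappa_2$ and $\kappa_2<0$. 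This is where the abundance regime hypothesis really feeds in.

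The second step is to identify the dominant asymptotic contribution. The factors $1/G_m(1)$ hidden inside $\mathcal{H}_{m,1}$ and $\mathcal{H}_{m,3}$ are of size $m^{\alpha-1}$, so both $\mathbb{I}_{m,1}$ and $\mathbb{I}_{m,3}$ are potentially large and of indeterminate sign; the proof will rely on an exact cancellation of these two terms up to controllable errors, which should follow from the structure of $G_m(r)$ near $r=1$ and the dispersion relation. After this cancellation, the remainder added to $\mathbb{I}_{m,2}$ should produce a limit of order $\tfrac{1}{m}$ with a definite nonzero sign. Concretely, I expect that modulo $O(m^{-1-\delta})$ corrections,
\[
\mathbb{I}_m \sim \tfrac{c_0}{m},
\]
with $c_0$ a nonzero constant depending only on $f_0$ (computable in terms of $\kappa_2$ and $f_0(1)$), which will give $\mathbb{I}_m\neq0$ for $m\geqslant m_0$.

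The main obstacle is precisely the cancellation between the two large $1/G_m(1)$ contributions. Unlike the defocusing case in Proposition \ref{prop-trans-low}, where every $\mathcal{H}_{m,k}$ could be signed separately, here one cannot even sign $\mathcal{H}_{m,2}+\mathcal{H}_{m,3}$ because $G_m(1)$ and $r^{1-m}G_m(r)-r^2G_m(1)$ can have opposing signs when $\Omega_m<\widehat\Omega_m$ and $f_0<0$. The remedy is to refuse to split the three terms and instead treat $\mathbb{I}_m$ as a single quantity, exploiting the ODE for $F_{m,\Omega_m}$ and the dispersion relation to collapse the triple-integral factor in $\mathcal{H}_{m,1}$ against the algebraic expressions in $\mathcal{H}_{m,2}+\mathcal{H}_{m,3}$, and then passing to the $m\to\infty$ asymptotic where $F_{m,\Omega_m}\equiv 1$ makes the calculation explicit.
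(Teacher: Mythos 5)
Your plan rests on a misreading of the structure of $d^\star_m$: the factor $1/G_m(1)$ appears only in $\mathcal{H}_{m,3}$, not in $\mathcal{H}_{m,1}$ (in \eqref{Hm-Tra1} the normalization \eqref{Norm-P1} has already absorbed the ratio $H_m[h^\star_m](1)/G_m(1)$ into the constant $\tfrac{1}{2(m+1)}$), so there are no ``two large terms'' whose exact cancellation must be engineered. The paper's proof goes in the opposite direction: using Lemma \ref{prop-InvertT1} and $\Omega_m-\kappa_2=-\tfrac{f_0(1)}{2m}+O(m^{-3/2})$ one shows $|\mathcal{H}_{m,1}|\leqslant C$, hence $\mathbb{I}_{m,1}$ and $\mathbb{I}_{m,2}$ stay bounded, while $\mathbb{I}_{m,3}$, which carries the sole factor $1/G_m(1)=\tfrac{1}{m(\Omega_m-\widehat{\Omega}_m)}$ with $|\Omega_m-\widehat{\Omega}_m|\leqslant m^{-3/2}$, satisfies $\mathbb{I}_{m,3}\approx \tfrac{\kappa}{m(\Omega_m-\widehat{\Omega}_m)}$ and therefore $|\mathbb{I}_{m,3}|\geqslant C m^{1/2}\to\infty$. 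Consequently $|\mathbb{I}_m|\to\infty$, not $\mathbb{I}_m\sim c_0/m$ as you predict; no cancellation is needed or present, and the step ``collapse $\mathcal{H}_{m,1}$ against $\mathcal{H}_{m,2}+\mathcal{H}_{m,3}$'' would be chasing a phantom.

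Two further points would derail the sketch even on its own terms. First, your appeal to the dispersion relation gives $\int_0^1F_{m,\Omega_m}(\tau)\tau^{2m+1}\big(\Omega_m-\tfrac{f_0(\tau)}{2}\big)d\tau=\tfrac12F_{m,\Omega_m}(1)\big(\Omega_m-\tfrac{m}{m+1}\kappa_2\big)$, which is \emph{not} bounded away from zero: since $\Omega_m\to\kappa_2$, this quantity is $O(1/m)$, so the boundary term you hoped to exploit is itself small. Second, after isolating the dominant contribution one must still prove that the limiting constant is nonzero; in the paper this is precisely condition \eqref{assump-trans}, namely $\kappa=1-\tfrac12 f_0(1)\int_0^1\frac{dr}{\ln(r)\int_0^1s^2f_0'(s)ds+f_0(1)}\neq 0$, established through a Laplace-transform and integration-by-parts identity showing $\kappa>\tfrac12$. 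Your proposal merely asserts a ``nonzero constant $c_0$'' with no mechanism for this nonvanishing, which is the genuinely delicate analytic point of the focusing transversality.
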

\begin{proof}
Applying Proposition \ref{prop-omegam-asymp}, with $\alpha=\frac32$,  we get that
\begin{align}\label{asiyq}
\Omega_m=\widehat{\Omega}_m+O\left(m^{-\frac32}\right),
\end{align}
and
\begin{align*}
\widehat{\Omega}_m&=\int_0^1 sf_0(s)ds-\frac{m+1}{m}\int_0^{1}s^{2m+1}f_0(s)ds\\
&=\int_0^1sf_0(s)ds-\frac{1}{2m}f_0(1)+\frac{m+1}{m}\int_0^{1}s^{2m+1}\big[f_0(1)-f_0(s)\big]ds\\
&=\int_0^1sf_0(s)ds-\frac{1}{2m}f_0(1)+O(m^{-2}).
\end{align*}
It follows that
\begin{align}\label{asym-diff}
 \Omega_m-\kappa_2=-\frac{1}{2m}f_0(1)+O(m^{-\frac32}).
 \end{align}
We start with an  estimate the first term $\mathbb{I}_{m,1}$ in \eqref{Trans-V2}  which is based on the control of  $\mathcal{H}_{m,1}$. From \eqref{Hm-Tra1}, \eqref{mutrivial} and  Lemma \ref{lem-pot1}-(2) we infer
\begin{align*}
|\mathcal{H}_{m,1}(r)|\leqslant \frac{Cm}{({\Omega_m-\int_0^1 s f_0(rs)ds})} \int_r^1\frac{1}{s^{2m+1}}\int_0^s \tau^{2m+1}d\tau ds.
\end{align*}
It follows from  \eqref{nu102} applied with $\theta=0$  that 
\begin{align*}
|\mathcal{H}_{m,1}(r)|&\leqslant \frac{m}{(1-r)}\int_1^r s^{-2m-1}\int_0^s \tau^{2m+1}d\tau dr\leqslant C.
\end{align*} Hence, we deduce from Lemma \ref{prop-InvertT1} applied with $\theta=1$ together with  \eqref{asym-diff}%
\begin{align*}
|\mathbb{I}_{m,1}|&\leqslant \left|\int_0^1 \nu_{\Omega_m}(r) r^{2m+1} |\mathcal{H}_{m,1}(r)|dr\right|\leqslant  \frac{C}{m(\Omega_m-\kappa_2)}\leqslant  C,
\end{align*}
and consequently
\begin{align}\label{Im-1}
{\limsup_{m\to\infty} } \,\mathbb{I}_{m,1}=C<+\infty.
\end{align}
Let us now move to the estimate of $\mathbb{I}_{m,2}$ which is quite similar to  $\mathbb{I}_{m,1}$. Then similar arguments based on Lemma \ref{prop-InvertT1}  yield to the estimate
\begin{align*}
| \mathbb{I}_{m,2}|&\leqslant \left|\int_0^1 \nu_{\Omega_m}(r) r^{2m+3} dr\right|\leqslant \frac{C}{(\Omega_m-\kappa_2)^\theta}\int_0^1 {\frac{r^{2m+3}}{(1-r)^{1-\theta}}}drdr.
\end{align*}
Let $\epsilon\in(0,1)$ then by splitting the integral we get
\begin{align*}
\int_0^1 {\frac{r^{2m+3}}{(1-r)^{1-\theta}}}dr&\leqslant \epsilon^{\theta-1}\int_0^{1-\epsilon} {{r^{2m+3}}}dr+\int_{1-\epsilon}^1 {(1-r)^{\theta-1}}dr\\
&\leqslant \frac{\epsilon^{\theta-1}}{2m+4}+\frac{\epsilon^\theta}{\theta}.
\end{align*}
Taking $\epsilon\approx m^{-1}$ yields 
\begin{align*}
\int_0^1 {\frac{r^{2m+3}}{(1-r)^{1-\theta}}}dr&\leqslant \frac{C}{\theta m^\theta}.
\end{align*}
Therefore we deduce for any $\theta\in(0,1)$ 
\begin{align*}
| \mathbb{I}_{m,2}|
&\leqslant \frac{C}{\theta m^\theta(\Omega_m-\kappa_2)^\theta}\cdot
\end{align*}
By fixing $\theta=\frac12$ and using \eqref{asym-diff} we find a constant $C>0$ such that 
\begin{align}\label{Im-2}
\forall m\in\N,\quad | \mathbb{I}_{m,2}|{\leqslant \frac{C}{m}\leqslant C.}
\end{align}
Let us now focus on the term  $\mathbb{I}_{m,3}$ which is more involved.  First we write from \eqref{Trans-V2}
\begin{align}\label{Mrad1}
\nonumber \mathbb{I}_{m,3}&=\int_0^1 \nu_{\Omega_m}(r) r^{2m+1} \mathcal{H}_{m,3}(r) dr+\int_0^1 \nu_{\Omega_m}(r) r^{2m+1}\big(F_{m,\Omega_m}(r)-1\big) \mathcal{H}_{m,3}(r) dr\\
 &=: \mathbb{I}_{m,3}^1+\mathbb{I}_{m,3}^2.
\end{align}
It is obvious from \eqref{Hm-Tra1} that
$$
\mathbb{I}_{m,3}^1= \frac{1}{G_m(1)}\int_0^1 \nu_{\Omega_m}(r)r^{m+2} G_m(r)dr.
$$
Next, we will write $G_m$ defined through \eqref{Gn-def} as follows
$$
G_m(r)=m r^{m-1}P_m(r),
$$
with
\begin{align*}
P_m(r)=&r^2\Omega_m+\frac{1}{m}\int_0^1 sf_0(s)ds-\frac{m+1}{m}\int_0^r sf_0(s)ds+\frac{m+1}{m r^{2m}}\int_0^r s^{2m+1}f_0(s)ds\\
=&r^2\left[\Omega_m-\int_0^1f_0(sr)ds\right]+\frac{1}{m}\int_r^1 sf_0(s)ds+\frac{m+1}{mr^{2m}}\int_0^r s^{2m+1}f_0(s)ds.
\end{align*}
Thus,  we get from \eqref{mutrivial} and \eqref{Gn1-1}
\begin{align}\label{Dec-pui}
\nonumber\mathbb{I}_{m,3}^1&=\frac{1}{\Omega_m-\widehat{\Omega}_m}\int_0^1 \frac{f_0'(r) r^{{2m}}P_m(r)}{(\Omega_m-\int_0^1 sf_0(sr)ds)}dr\\
\nonumber=&\frac{1}{\Omega_m-\widehat{\Omega}_m}\int_0^1 {f_0'(r)}r^{2m+2}dr+\frac{1}{(\Omega_m-\widehat{\Omega}_m)m}\int_0^1 \frac{f_0'(r) r^{2m}\int_r^1 sf_0(s)ds}{(\Omega_m-\int_0^1 sf_0(sr)ds)} dr\\
\nonumber&+\frac{m+1}{m(\Omega_m-\widehat{\Omega}_m)}\int_0^1 \frac{f_0'(r) \int_0^r s^{2m+1}f_0(s)ds}{(\Omega_m-\int_0^1 sf_0(sr)ds)} dr\\
=:&\frac{1}{m(\Omega_m-\widehat{\Omega}_m)}\left(\mathbb{J}_{m,1}+\mathbb{J}_{m,2}+\mathbb{J}_{m,3}\right).
\end{align}
Let us check that $\mathbb{J}_{m,2}$ decays to  $0$ for large $m$.
Applying \eqref{nu102} with $\theta=0$ implies 
\begin{align*}
|\mathbb{J}_{m,2}|\leqslant&C\int_0^1 \frac{r^{2m}}{(1-r)}(1-r)dr\leqslant \frac{C}{m}.
\end{align*}
 Hence
$$
\lim_{m\rightarrow \infty}\mathbb{J}_{m,2}=0.
$$
Let us now deal  with $\mathbb{J}_{m,1}$. First recall  the elementary result 
$$
\lim_{m\rightarrow \infty} m\int_0^1 f_0'(r) r^{m+2}dr=f_0'(1)\neq 0,
$$
then 
$$
\lim_{m\rightarrow \infty}\mathbb{J}_{m,1}=f_0'(1).
$$
To deal with the last term $ \mathbb{J}_{m,3},$ we use first
\begin{align*}
\lim_{m\rightarrow \infty} m r^{-2m-2} {\int_0^r f_0(s) s^{2m+1}ds}
=& \lim_{m\rightarrow \infty} m  {\int_0^1 f_0(rs) s^{2m+1}ds}  \\
=& \lim_{m\rightarrow \infty} \frac{m}{m+1}  {\int_0^1 f_0(rs^\frac{1}{m+1}) sds}  \\
=&\tfrac12f_0(r),
\end{align*}
leading after a change of variables $\tau=r^{2m+3}$  to 
\begin{align*}
\lim_{m\rightarrow \infty} \mathbb{J}_{m,3}=&\lim_{m\rightarrow +\infty}\tfrac12\int_0^1 \frac{f_0'(r)f_0(r) r^{2m+2}}{\Omega_m-\kappa_2+\int_0^1 s[f_0(s)-f_0(sr)]ds}dr\\
=&\lim_{m\rightarrow \infty}\frac12 \frac{1}{2m+3}\int_0^1 \frac{f_0'(\tau^{\frac{1}{2m+3}})f_0(\tau^{\frac{1}{2m+3}}) }{\Omega_m-\kappa_2+\int_0^1 s[f_0(s)-f_0(s\tau^{\frac{1}{2m+3}})]ds}d\tau.
\end{align*}
Using once again \eqref{asym-diff}, it yields  
\begin{align*}
\lim_{m\rightarrow \infty}(2m+3)[f_0(s)-f_0(s\tau^{\frac{1}{2m+3}})]=&\lim_{m\rightarrow +\infty}(2m+3)s(1-\tau^{1/(2m+3})\int_0^1 f_0'\big(s+\tau_1 s(\tau^{1/(2m+3)}-1)\big)d\tau_1\\
&=-\ln(r)sf_0'(s).
\end{align*}
Putting  together the preceding results  we find
\begin{align*}
\lim_{m\rightarrow \infty} \mathbb{J}_{m,3}=-\tfrac12f_0'(1)f_0(1)\int_0^1 \frac{dr}{\ln(r)\int_0^1 s^2f_0'(s)ds+{{f_0(1)}}}\cdot
\end{align*}
Then
$$
\lim_{m\rightarrow \infty} \mathbb{J}_{m,1}+ \mathbb{J}_{m,2}+ \mathbb{J}_{m,3}=f_0'(1)\left\{1-\tfrac12f_0(1)\int_0^1 \frac{dr}{\ln(r)\int_0^1 s^2f_0'(s)ds+{f_0(1)}}\right\}.
$$
Assume for a while that
\begin{equation}\label{assump-trans}
{\kappa:=}1-\tfrac12f_0(1)\int_0^1 \frac{dr}{\ln(r)\int_0^1 s^2f_0'(s)ds+{f_0(1)}}\neq 0,
\end{equation}
and plugging this into \eqref{Dec-pui} yields for large $m$
\begin{align}\label{asym-yu1}
 \mathbb{I}_{m,3}^1
\approx&\frac{\kappa}{m(\Omega_m-\widehat{\Omega}_m)}\cdot
\end{align}
Coming back to \eqref{Dec-pui} and \eqref{Mrad1} we get the splitting  
\begin{align*}
\mathbb{I}_{m,3}^2
=:&\frac{1}{m(\Omega_m-\widehat{\Omega}_m)}\left(\widehat{\mathbb{J}}_{m,1}+\widehat{\mathbb{J}}_{m,2}+\widehat{\mathbb{J}}_{m,3}\right),
\end{align*}
with
$$
\forall i\in\{1,2,3\},\quad |\widehat{\mathbb{J}}_{m,i}\leqslant |{\mathbb{J}}_{m,i}|\sup_{r\in[0,1]}|F_{m,\Omega_m}(r)-1|.
$$
By virtue of Lemma \ref{lem-pot1}-(2) we get
$$
\forall r\in [0,1],\quad |F_{m,\Omega_m}(r)-1|\leqslant \frac{C_0}{\theta m(\Omega_m-\kappa_2)^\theta}.
$$
Using once again \eqref{asym-diff} and  Proposition \ref{prop-omegam-asymp}-(1) allows to get
$$
\forall r\in [0,1],\quad |F_{m,\Omega_m}(r)-1|\leqslant \frac{C_0}{\theta m^{1-\theta}}\cdot
$$
It follows that
$$
\forall i\in\{1,2,3\},\quad |\widehat{\mathbb{J}}_{m,i}|\leqslant C_0 \frac{|{\mathbb{J}}_{m,i}|}{\theta m^{1-\theta}},
$$
implying that
$$
\lim_{m\to \infty}\widehat{\mathbb{J}}_{m,i}=0
$$
Therefore we obtain from  \eqref{Mrad1} and \eqref{asym-yu1}
\begin{align}\label{Im-3}
\mathbb{I}_{m,3}&\approx \mathbb{I}_{m,3}^1\approx\frac{\kappa}{m(\Omega_m-\widehat{\Omega}_m)}\cdot
\end{align}
This implies  according to \eqref{asiyq} that
$$
|\mathbb{I}_{m,3}|\geqslant C m^{\frac12}.
$$
Thus, plugging \eqref{Im-1}, \eqref{Im-2}, \eqref{Im-3} into \eqref{Trans-V2} leads  for large $m$ to
\begin{align*}
\nonumber\mathbb{I}_{m}&\approx\mathbb{I}_{m,3},
\end{align*}
implying in turn that
$$
\lim_{m\to\infty}|\mathbb{I}_{m}|=\infty.
$${ 
Finally, let us check \eqref{assump-trans}. Using the change of variables $-\ln x= \mu\,\tau$ we get  
$$
\int_0^1\frac{dx}{\mu-\ln x}=\int_0^\infty\frac{e^{-\mu \tau}}{1+\tau}d\tau,\quad \forall  \mu>0.
$$
Set
${\mu=\frac{-f_0(1)}{\int_0^1 s^2f_0'(s)ds}},$ then
\begin{align*}
1-\tfrac12f_0(1)\int_0^1 \frac{dx}{\ln(x)\int_0^1 s^2f_0'(s)ds+{f_0(1)}}&=1-\tfrac12\mu\int_0^1\frac{dx}{\mu-\ln x}=1-\tfrac12 \mu\int_0^\infty\frac{e^{-\mu \tau}}{1+\tau}d\tau.
\end{align*}
Integration by parts yields
$$
1-\mu\int_0^\infty\frac{e^{-\mu \tau}}{1+\tau}d\tau=\int_0^\infty\frac{e^{-\mu \tau}}{(1+\tau)^2}d\tau,
$$
and therefore
\begin{align*}
1-\tfrac12f_0(1)\int_0^1 \frac{dr}{\ln(r)\int_0^1 s^2f_0'(s)ds+{f_0(1)}}&=\frac12+\frac12\int_0^\infty\frac{e^{-\mu t}}{(1+\tau)^2}d\tau>0.
\end{align*}
This achieves the proof of \eqref{assump-trans} and the  proof of the transversality condition is complete.}
\end{proof}

\section{Proof of Theorem \ref{th-intro}}\label{Sec-main-result}

In this section, we shall put together all the preceding results to prove the main result of this work: Theorem \ref{th-intro}. The proof is based on  the Crandall-Rabinowitz theorem \cite{rabinowitz:simple} and  all  its required  assumptions are checked in the previous sections. Here, we shall summarize the steps implemented before.  Let us analyze just the defocusing case i), and the other case follows similarly.\\
First, from Proposition \ref{prop-omegan-low} there exists $m_1\in\N$ with $m_1\geq \frac{1}{10(A[f_0]-1)}$ such that for any $m\in[2,m_1]\cap\N$, there exists $\Omega_m$ solution of $\zeta_m(\Omega)=0$. From now on, fix $m$ and $\Omega_m$ according to such proposition.\\
Then, from Section \ref{Sec-formulation} we know that the existence of nontrivial rotating solutions to the 2D Euler equations around generic equilibria agrees with the existence of nontrivial roots of the functional 
$$\widehat{G}:I\times B_{\mathscr{C}_{s,m}^{1,\alpha}(\D)}(0,\E)\to \mathscr{C}_{s,m}^{1,\alpha}(\D),
$$ 
which is well-defined and $\mathscr{C}^1$ due to Proposition \ref{prop-G-welldef}. The proof then relies on finding the roots of $\widehat{G}$ and thus applying the Crandall-Rabinowitz theorem ({see  \cite{rabinowitz:simple,kielhofer})}. \\
For that goal, one needs to check the spectral properties of $D_g \widehat{G}(\Omega_m,0)$. This operator is described in Fourier series in \eqref{lin-op}:
\begin{align*}
D_g \widehat{G}(\Omega_m,0)[h](re^{i\theta})=&\mathbb{L}_0^\Omega [h_0](r)+\sum_{n\in m\N^\star} \cos(n\theta)\mathbb{L}_{n}^{\Omega_m} [h_{n}](r).
\end{align*}
By Proposition \ref{compactop} we know that $D_g \widehat{G}(\Omega_m,0)$ is Fredholm of zero index and thus
$$
\mbox{dim }\mbox{Ker } D_g\widehat{G}(\Omega_m,0)=\mbox{dim }\mbox{Rang }Y\backslash D_g\widehat{G}(\Omega_m,0),
$$
and then we just need to check that the kernel is one dimensional. To study the kernel, we can do it for each Fourier mode. Note that the case $n=0$ was studied in Proposition \ref{radialfunctions} obtaining the invertibility of the operator. Now, fix $n=m$ and study
$$
\mathbb{L}_m^{\Omega_m}[h_m](r)=0.
$$
Notice that $m\leq \frac{1}{10(A[f_0]-1)}$ implies that 
$$
m\leq \frac{\kappa_1}{\kappa_2-\kappa_1}.
$$ 
Since we have chosen $\Omega_m$ to have $\zeta_m(\Omega_m)=0$, then using Proposition \ref{prop-element-kernel} one gets one element in the kernel of $D_g \widehat{G}(\Omega_m,0)$. Note that here we are dealing with Case 1 described in Section \ref{sec-kernel-gen}. Finally, the transversal condition is satisfied thanks to Proposition \ref{prop-trans-low}. {Consequently  Crandall-Rabinowitz theorem \cite{rabinowitz:simple} can be applied and so our statement is achieved.}

\bibliographystyle{plain}
\bibliography{references}

\end{document}